\theoremstyle{plain}
\newtheorem{theorem}{Theorem}[section]
\newtheorem{lemma}[theorem]{Lemma}
\newtheorem{proposition}[theorem]{Proposition}
\theoremstyle{definition}
\newtheorem{definition}[theorem]{Definition}
\theoremstyle{remark}
\newtheorem{remark}{Remark}
\newcommand{\tensor}[1]{\mathsf{#1}}
 \DeclareMathOperator{\RE}{Re}
 \DeclareMathOperator{\IM}{Im}
 \DeclareMathAlphabet{\mathpzc}{OT1}{pzc}{m}{it}
 \DeclareMathAlphabet{\mathsfsl}{OT1}{cmss}{m}{sl}
  \newcommand{ \mi}{\mathrm{i}}
  \newcommand{\FH}{\mathfrak{H}}
\newcommand{\dif}{\mathrm{d}}
\newcommand{\R}{\mathbb{R}}
 \newcommand{\Rnum}{\mathbb{R}}
\newcommand{\abs}[1]{\left\vert#1\right\vert}
\newcommand{\set}[1]{\left\{#1\right\}}
\newcommand{\norm}[1]{\left\Vert#1\right\Vert}
 \newcommand{\innp}[1]{\langle {#1}\rangle}
\newcommand{\E}{\mathbb{E}}
\newcommand{\1}{\mathbf{1}}
\newcommand\sgn{\mathrm{sgn}}
\begin{document}

\title{
Parameter Estimation for the Complex Fractional Ornstein-Uhlenbeck Processes with Hurst parameter $H \in (0, \frac12)$}
\author{
\name{Fares ALAZEMI\textsuperscript{a,*}\thanks{*Corresponding author:  Fares ALAZEMI, Email: fares.alazemi@ku.edu.kw. } , Abdulaziz ALSENAFI\textsuperscript{a}, Yong CHEN\textsuperscript{b}, and  Hongjuan ZHOU\textsuperscript{c}}
\affil{\textsuperscript{a}Department of Mathematics, Faculty of Science, Kuwait University, Kuwait. \\
\textsuperscript{b}School of Mathematics and Statistics, Jiangxi Normal University, Nanchang, 330022, Jiangxi, China.}
\textsuperscript{c}{School of Mathematical and Statistical Sciences, Arizona State University,\,Arizona,  {85287},  USA}
}

\maketitle
\begin{abstract}
We study the strong consistency and asymptotic normality of a least
squares estimator of the drift coefficient in complex-valued Ornstein-Uhlenbeck processes driven by fractional Brownian motion, extending  the results  of \cite{chw} to the case of Hurst parameter $H\in (\frac14,\frac12)$ and the results of \cite{hnz 19} to a two-dimensional case. When $H\in (0,\frac14]$,  it is found that the integrand of the estimator is not in the domain of the standard divergence operator. To facilitate the proofs, we develop a new inner product formula for functions of bounded variation in the reproducing kernel Hilbert space of fractional Brownian motion with Hurst parameter $H\in (0,\frac12)$. This formula is also applied to obtain the second moments of the so-called $\alpha$-order fractional Brownian motion and the $\alpha$-fractional bridges with the Hurst parameter $H\in (0,\frac12)$.
\end{abstract}
\begin{keywords}  Complex Wiener-It\^o multiple integral; Fractional Brownian motion; Fractional Ornstein-Uhlenbeck process; Least squares estimate; Fourth moment theorem; $\alpha$-fractional Brownian bridge; $\alpha$-order fractional Brownian motion. \end{keywords}
\begin{amscode} 60G15; 60G22; 62M09 \end{amscode}

\section{Introduction and main results}

The statistical inference problems for one-dimensional stochastic differential equations driven by fractional Brownian motion have been intensively studied in the literature, but the statistical estimations for the multi-dimensional fractional stochastic equation have not been completely studied (see \cite{hnz 19a} and the references therein). This paper seeks to make a meaningful contribution within the context of this landscape, by studying the least squares estimator for the complex fractional Ornstein-Uhlenbeck process where the Hurst parameter is less than $\frac12$. 

The complex Ornstein-Uhlenbeck process is known as the solution to the stochastic differential equation 
\begin{equation}\label{cp}
  \dif Z_t=-\gamma Z_t\dif t+ \sqrt{a}\dif \zeta_t\,,\quad t\ge 0\,,
\end{equation}%
where $Z_t=X_1(t)+\mi X_2(t)$ is a complex-valued process, $\gamma=\lambda-\mi \omega,\,\lambda>0$, $a>0$ and $\zeta_t
$ is a complex Brownian motion. This process has been applied to model the Chandler wobble, or variation of latitude concerning the rotation of the earth (see \cite{arta3}, \cite{arat}). 

In \cite{chw},  the statistical estimator of $\gamma$ is considered when the complex Brownian motion $\zeta$ in  \eqref{cp} is replaced by a complex fractional Brownian motion
$$\zeta_t=\frac{B^1_t+i B^2_t}{\sqrt 2},$$ where
$(B^1_t, B^2_t)$ is a two-dimensional fractional Brownian motion (fBm) with Hurst parameter $H$.
Namely, setting up $a=1$ and $Z_0=0$ without loss of generality, the equation \eqref{cp} is expressed intuitively as
\begin{equation*}
   \dot{Z}_t+\gamma Z_t=\dot{\zeta}_t\,, \quad 0\le t\le T.
\end{equation*}
 Minimizing $\int_0^T\abs{ \dot{Z}_t+\gamma Z_t}^2\dif t$ yields a least squares estimator of $\gamma$ as follows:
\begin{equation}\label{hat gamma}
   \hat{\gamma}_T=-\frac{\int_0^T \bar{Z}_t\dif Z_t}{\int_0^T \abs{Z_t}^2\dif t}=\gamma-\frac{\int_0^T \bar{Z}_t\dif \zeta_t}{\int_0^T \abs{Z_t}^2\dif t}.
\end{equation} 
When $H\in [\frac12,\frac34)$, the strong consistency and the asymptotic normality of the estimator $\hat{\gamma}_T$ are shown in \cite{chw}. The work has been extended to the complex fractional Vasicek model (see \cite{sty}). 

We would like to point out that 
$\hat{\gamma}_T$ is not an ideal estimator as both stochastic integrals in \eqref{hat gamma} are related to the unknown parameter $\gamma$. However, it is still meaningful to study the asymptotic properties of the ratio process 
\begin{align}\label{ratio process}
    \gamma-\hat{\gamma}_T=\frac{\int_0^T \bar{Z}_t\dif \zeta_t}{\int_0^T \abs{Z_t}^2\dif t}
\end{align}
that could be useful in deriving other estimators (\cite{hnz 19a}), for example, the ergodic estimator or moment estimator. 

The question naturally arises whether the strong consistency and the asymptotic normality of the estimator $\hat{\gamma}_T$ still hold when $H\in (0,\frac12)$. An affirmative answer is shown for the real-valued fractional Ornstein-Uhlenbeck process in \cite{hnz 19}. Hence, it is conjectured that the result is also valid for the complex-valued fractional Ornstein-Uhlenbeck process. However, the answer is not as expected , i.e., it turns out that part of the results in the real-valued fractional Ornstein-Uhlenbeck process cannot be extended into the complex case when $0<H<\frac{1}{2}$. The results for the complex case are stated in the following Theorem \ref{com-ou parameter estimate}.



\begin{theorem}\label{com-ou parameter estimate}
   Let $H\in (\frac14, \frac34)$.
   \begin{enumerate}
   \item[(i)]
$\hat{\gamma}_T$  converges to $\gamma$ almost surely
as $T\rightarrow \infty$.
\item[(ii)] $\sqrt{T}\left( \hat{\gamma}_T-\gamma\right)  $ is asymptotically bivariate normal. Namely,
\begin{equation}\label{normality.gammat}
   \sqrt{T}[\hat{\gamma}_T-\gamma]\stackrel{ {law}}{\to}  \mathcal{N} \left(0,\frac{1}{ d^2 } \tensor{C}\right)\quad\hbox{
   as $T\rightarrow \infty$}\,,
\end{equation}
where
$\tensor{C}=\begin{bmatrix} \sigma^2+c & b\\ b & \sigma^2-c  \end{bmatrix}$ with
 \begin{align}
	    \sigma^2&= \frac{1}{2
     \lambda}\big( {\gamma^{2-4H}}+ {\bar{\gamma}^{2-4H}}\big) \left( 1+\frac{\Gamma(3-4H)\Gamma(4H-1)}{\Gamma(2H)\Gamma(2-2H)}\right),\label{sigmah0002} \\
     c +\mi b &=\frac{4H-2}{
     \bar{\gamma}^{4H-1}}\left( 1+\frac{\Gamma(3-4H)\Gamma(4H-1)}{\Gamma(2H)\Gamma(2-2H)}\right), \label{sigmajiaocha0002}\\
     d&= \frac{1}{2\lambda}\big( {\gamma^{1-2H}}+ {\bar{\gamma}^{1-2H}}\big)\,. 
\label{d d}
	\end{align} 
\end{enumerate}
\end{theorem}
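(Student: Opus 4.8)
The plan is to analyze the ratio \eqref{ratio process} by treating the numerator and denominator separately. Writing
\begin{equation*}
   \sqrt{T}\left(\hat{\gamma}_T-\gamma\right)=-\,\frac{\frac{1}{\sqrt{T}}\int_0^T \bar{Z}_t\dif \zeta_t}{\frac{1}{T}\int_0^T \abs{Z_t}^2\dif t}=:-\frac{N_T}{D_T}\,,
\end{equation*}
I would first show that the denominator $D_T$ converges almost surely to the stationary second moment $\E\abs{Z_\infty}^2$, where $Z_\infty=\int_{-\infty}^0 e^{\gamma s}\dif\zeta_s$ is the value at the origin of the stationary complex fractional Ornstein--Uhlenbeck process. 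This limit is a positive deterministic constant, expressible as an explicit double integral against the covariance of the driving noise; its value, together with the covariance of the numerator, produces the constant $d$ of \eqref{d d} in the final normalization. The almost sure convergence itself follows from the ergodicity of the stationary Gaussian process underlying $Z$.

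For part (i) it then suffices to bound the numerator. Since $\int_0^T\bar{Z}_t\dif\zeta_t$, suitably renormalized, lies in a fixed Wiener chaos, its $L^2$-norm grows at most like $\sqrt{T}$; hypercontractivity upgrades this to control of all moments, and a Borel--Cantelli argument along a geometric subsequence, combined with a maximal estimate to fill the gaps, yields $\int_0^T\bar{Z}_t\dif\zeta_t=o(T)$ almost surely. Dividing by $D_T$, which grows linearly, gives $\hat{\gamma}_T\To\gamma$ almost surely.

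For part (ii) the core is to prove that $N_T$ converges in law to a complex Gaussian whose covariance and pseudo-covariance are those encoded in $\tensor{C}$. Since $\bar{Z}_t$ is itself a first-order Wiener integral against the driving fractional Brownian motion, $N_T$ is, up to lower-order boundary terms coming from the divergence operator, an element of the second complex Wiener--It\^o chaos. I would invoke the fourth moment theorem in its complex formulation: it suffices to check that the relevant contraction norms of the symmetric kernel of $N_T$ tend to zero, and to compute the limiting moments $\E\abs{N_T}^2$ and $\E N_T^2$, which fix the entries of $\tensor{C}$ through \eqref{sigmah0002} and \eqref{sigmajiaocha0002}. Slutsky's theorem, together with the almost sure convergence of $D_T$ from the first step, then yields \eqref{normality.gammat}.

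The main obstacle, and the reason for the restriction to $H\in(\frac14,\frac34)$, lies in the covariance and contraction computations when $H<\frac12$. For such $H$ the reproducing kernel Hilbert space $\FH$ of the fractional Brownian motion is not a space of ordinary functions and involves fractional derivatives, so the inner product formula valid for $H\ge\frac12$ breaks down. I would therefore use the new inner product formula for functions of bounded variation developed earlier in the paper, which makes $\innp{f,g}_{\FH}$ explicitly computable against the exponential kernels $e^{\gamma s}$. The Gamma-function factors in \eqref{sigmah0002} and \eqref{sigmajiaocha0002} then emerge from integrals of the type $\int_0^\infty\int_0^\infty e^{-\gamma s-\bar{\gamma}t}\abs{s-t}^{2H-2}\dif s\dif t$, and the factors $\Gamma(4H-1)$ and $\Gamma(3-4H)$ are finite exactly when $\frac14<H<\frac34$, which is what pins down the admissible range. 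Establishing that the second-chaos contractions vanish within this same framework is the most delicate point; for $H\le\frac14$ it fails because $\bar{Z}_t$ ceases to belong to the domain of the divergence operator, as anticipated in the abstract.
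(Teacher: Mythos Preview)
Your proposal is correct and follows essentially the same architecture as the paper: split the ratio into numerator and denominator, handle the denominator by ergodicity of the underlying stationary Gaussian process (Lemma~\ref{lem fenmuzixian}), handle the numerator as a second-chaos element via the complex fourth moment theorem (Theorem~\ref{com FMT}) together with the variance asymptotics of Proposition~\ref{qq1a}, and combine via Slutsky.

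Two places where the paper is more specific than your outline deserve mention. First, for the almost sure convergence $\frac{1}{T}X_T\to 0$, the paper does not use a geometric subsequence with a generic maximal inequality; instead it runs Borel--Cantelli along the integers and then controls $\sup_{t\in[n,n+1]}|X_t-X_n|$ via the Garsia--Rodemich--Rumsey inequality, after establishing the increment bound $\E|X_t-X_s|^2\le C|t-s|^{2H}$ from the inner product formula (see Proposition~\ref{prop fenzi as converg}). Second, for the vanishing of the contraction norms $\|\psi_T\otimes_{i,j}h_T\|$, which you correctly flag as the delicate step, the paper does not stay within the bounded-variation inner product formula but switches to the Pipiras--Taqqu spectral representation of $\mathfrak{H}$ and bounds the contractions via Fourier transform (Proposition~\ref{con equa 0} and Lemma~\ref{coro-zhou}); this is where the lower threshold appears concretely, and in fact the contraction estimate as written is only carried out for $H>\frac16$. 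Your heuristic that the range $(\frac14,\frac34)$ is dictated by the finiteness of $\Gamma(4H-1)\Gamma(3-4H)$ is a useful mnemonic, but the operative mechanism for the lower endpoint is that $\psi_T\notin\mathfrak{H}^{\otimes 2}$ when $H\le\frac14$ (Proposition~\ref{qq1a}), which is exactly the divergence-domain failure you mention at the end.
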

\begin{remark} 
   When $H\in [\frac12, \frac34)$, the results 
   are consistent with those in \cite{chw}. When $H\in (0,\frac14]$,  the stochastic integral in the estimator \eqref{hat gamma} is not 
   well defined (see Proposition~\ref{qq1a}).
\end{remark}
The proof for the strong consistency of the estimator $\hat\gamma_T$ is based on ergodicity and the Garsia-Rodemich-Rumsey inequality. The complex fourth moment theorem (see Theorem~\ref{com FMT}) can be applied to show the asymptotically normality of $\hat\gamma_T$. Namely, we 
introduce two complex $(1,1)$ Wiener-It\^o integrals (see Definition~\ref{def.complex.ito}):
\begin{equation}
    X_T=I_{1,1}(\psi_T(t,s)),\quad  F_T=\frac{1}{\sqrt{T}}I_{1,1}(\psi_T(t,s)),
\label{e.def-X}
\end{equation} where the kernel $\psi_T$ is given in \eqref{phist defn}. By taking into account that $\bar Z_t = \int_0^t \psi_T (t,s) d\zeta_s$, we rewrite \eqref{ratio process} as 
\begin{equation} 
    \hat{\gamma}_T-\gamma =-\frac{\frac{1}{ {T}}X_T}{\frac{1}{T}\int_0^T \abs{Z_t}^2\dif t}, \label{ratio.pro.re}\\
\end{equation}
and
\begin{align}
     \sqrt{T}(\hat{\gamma}_T-\gamma)&=-\frac{F_T}{\frac{1}{T}\int_0^T \abs{Z_t}^2\dif t}. \label{ratio.pro.norm}
\end{align}
Denote by $\mathfrak{H}$ the associated reproducing kernel Hilbert space of fBm $B^H$. With the abuse of notation, we still use $\mathfrak{H}$ to denote its complexification. Following the proof idea in \cite{hnz 19}, we can use Fourier transform to bound the norm of $\psi_T$ in $\mathfrak{H}^{\otimes 2}$, and the contraction between the functions $\psi_T$ and $h_T$, where $\psi_T,\,h_T$ are given in \eqref{phist defn}. The results are summarized in the following Proposition \ref{qq1a}. This proposition highlights an important result that can be helpful to prove the limiting theorem given in \eqref{normality.gammat} for fulfilling the key conditions required by the complex fourth moment theorem (see Theorem~\ref{com FMT}). 

 \begin{proposition}\label{qq1a}
         Denote the functions of two variables
	\begin{equation}\label{phist defn}
  \psi_T(t,s)=e^{-\bar{\gamma}(t-s)}\mathbf{1}_{\{0\le s\le t\le T\}} , \qquad       h_T(t,s)=e^{- {\gamma}(s-t)}\mathbf{1}_{\{0\le t\le s\le T\}}.
	\end{equation}
  If $H\in(0, \frac14)$ then $\psi_T, h_{T}$ do not belong to the tensor space $\FH^{\otimes 2}$. If $H\in(\frac14,\frac12)$, then $\psi_T, h_{T}$ 
 belong to the tensor space $\FH^{\otimes 2}$ and there is a positive constant $C_{H,\theta}$ independent of $T$ such that when $T$ is large enough, the inequalities 
\begin{align}
     \Big|	\|\psi_T\|^2_{{\FH}^{\otimes2}}-(H\Gamma(2H))^2 M_H^2 T\Big|&\le  C_{H,\theta}\label{diyigedingli-complex},\\
  \Big|  \innp{\psi_T, h_{T}}_{{\FH}^{\otimes2}} - (H\Gamma(2H))^2  N_H T  \Big| &\le  C_{H,\theta}, \label{diyigedingli-complex-002}
\end{align}
 hold, where
 \begin{align}
	    M_H^2&= \frac{1}{2
     \lambda}\big( {\gamma^{2-4H}}+ {\bar{\gamma}^{2-4H}}\big) \left( 1+\frac{\Gamma(3-4H)\Gamma(4H-1)}{\Gamma(2H)\Gamma(2-2H)}\right),\label{sigmah0002} \\
     N_H&=\frac{4H-2}{
     \bar{\gamma}^{4H-1}}\left(1+\frac{\Gamma(3-4H)\Gamma(4H-1)}{\Gamma(2H)\Gamma(2-2H)}\right). \label{sigmajiaocha0002}
	\end{align}
 \end{proposition}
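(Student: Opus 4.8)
The plan is to pass to the spectral domain and reduce everything to one family of weighted double integrals. For $H\in(0,\frac12)$ the complexified space $\FH$ admits the Fourier representation
\[\innp{f,g}_{\FH}=c_H\int_{\R}\hat f(\xi)\,\overline{\hat g(\xi)}\,|\xi|^{1-2H}\,\dif\xi,\qquad c_H=\frac{H\Gamma(2H)\sin(\pi H)}{\pi},\]
for functions of bounded variation extended by zero outside $[0,T]$, where $\hat f$ is the Fourier transform; the form of $c_H$ is chosen so that the prefactor $(H\Gamma(2H))^2$ in \eqref{diyigedingli-complex}--\eqref{diyigedingli-complex-002} falls out automatically. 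Tensorizing yields
\[\norm{\psi_T}^2_{\FH^{\otimes2}}=c_H^2\int_{\R^2}\big|\hat\psi_T(\xi,\eta)\big|^2\,|\xi|^{1-2H}|\eta|^{1-2H}\,\dif\xi\,\dif\eta,\]
and the analogous bilinear formula for $\innp{\psi_T,h_T}_{\FH^{\otimes2}}$. Since membership in $\FH^{\otimes2}$ is equivalent to finiteness of the right-hand side, the dichotomy at $H=\frac14$ and the two asymptotics are all extracted from the same integrals; the first task is therefore to record the rigorous form of this characterization for $H<\frac12$, where the bounded-variation inner-product framework of the paper is used to ensure that finiteness of the spectral integral is not only necessary but sufficient for $\psi_T,h_T\in\FH^{\otimes2}$.

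Next I would compute the transforms in closed form. A direct integration gives
\[\hat\psi_T(\xi,\eta)=\frac{1}{\bar\gamma-\mi\eta}\left[\frac{1-e^{-\mi(\xi+\eta)T}}{\mi(\xi+\eta)}-\frac{1-e^{-(\bar\gamma+\mi\xi)T}}{\bar\gamma+\mi\xi}\right],\]
together with a parallel expression for $\hat h_T$; indeed $h_T(t,s)=\overline{\psi_T(s,t)}$, which determines $\hat h_T$ up to a reflection. I would split each transform into a resonant part carrying the factor $\frac{1-e^{-\mi(\xi+\eta)T}}{\mi(\xi+\eta)}$, which is concentrated near the line $\xi+\eta=0$, and a non-resonant remainder. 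Squaring the resonant part produces the Fej\'er kernel $\frac{4\sin^2((\xi+\eta)T/2)}{(\xi+\eta)^2}$, which acts as $2\pi T\,\delta(\xi+\eta)$ as $T\to\infty$; this one mechanism governs simultaneously the linear growth in $T$ and the threshold at $H=\frac14$.

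Introducing $u=\xi+\eta$ and integrating out $u$, the leading contribution to $\norm{\psi_T}^2_{\FH^{\otimes2}}$ is $2\pi c_H^2\,T\,G(0)$ plus an error, where
\[G(0)=\int_{\R}\frac{|\eta|^{2-4H}}{(\bar\gamma-\mi\eta)(\gamma+\mi\eta)}\,\dif\eta,\]
and the cross inner product has the same form with $(\bar\gamma-\mi\eta)(\gamma+\mi\eta)$ replaced by $(\bar\gamma-\mi\eta)^2$. For $H\in(0,\frac14)$ one has $2-4H>1$, the integrand decays like $|\eta|^{-4H}$ with $4H<1$, and $G(0)=+\infty$; since the integrand of the spectral norm is nonnegative and, for $|\eta|$ large, $\hat\psi_T$ is dominated by its resonant part on a neighbourhood of $\xi+\eta=0$, restricting to that neighbourhood already yields a divergent integral, so $\psi_T\notin\FH^{\otimes2}$ (likewise $h_T$). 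For $H\in(\frac14,\frac12)$ the integral $G(0)$ converges; using $\lambda^2+(\omega-\eta)^2=(\eta-\mi\gamma)(\eta+\mi\bar\gamma)$ and evaluating by a Mellin/contour argument with the reflection formula for $\Gamma$, I would identify $2\pi c_H^2 G(0)$ with $(H\Gamma(2H))^2 M_H^2$ and the cross integral with $(H\Gamma(2H))^2 N_H$.

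It remains to upgrade the leading-order statement to the uniform $O(1)$ bounds \eqref{diyigedingli-complex}--\eqref{diyigedingli-complex-002}. Writing the resonant contribution as $\int_{\R}\frac{4\sin^2(uT/2)}{u^2}G(u)\,\dif u$, I would show that $G$ is $C^{1,1-2H}$, so that after subtracting $G(0)$ and the linear term $G'(0)u$ --- the latter integrating to zero by oddness --- the remainder obeys $\frac{G(u)-G(0)-G'(0)u}{u^2}=O(|u|^{-2H})$, which is integrable near $0$ because $2H<1$; the Fej\'er error is then $O(1)$ by Riemann--Lebesgue, while the non-resonant and cross terms are $T$-independent up to exponentially small pieces and are controlled by the integrability of $|\xi|^{-1-2H}|\eta|^{-1-2H}$ at infinity. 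I expect the two genuine difficulties to be, first, the exact constant: passing from $G(0)$ to the precise value $\frac{1}{2\lambda}\big(\gamma^{2-4H}+\bar\gamma^{2-4H}\big)\big(1+\frac{\Gamma(3-4H)\Gamma(4H-1)}{\Gamma(2H)\Gamma(2-2H)}\big)$ requires careful bookkeeping of the two poles $\eta=\mi\gamma,\,-\mi\bar\gamma$ and of the $\sin(\pi H)$ factors hidden in $c_H$; and second, justifying the spectral characterization and the interchange of the $u$- and $\eta$-integrations near the resonance for $H<\frac12$, precisely the borderline made delicate by the low regularity of $\FH$.
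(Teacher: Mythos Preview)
Your spectral approach is sound and genuinely different from the paper's. The paper works entirely in the time domain via its new inner-product formula (Proposition~\ref{st1-thm-original}): it writes $\|\psi_T\|^2_{\FH^{\otimes2}}$ as a four-fold integral involving distributional derivatives of $\mathbf{1}_{\{0\le s<t\le T\}}$, expands into three pieces $A_1+A_2+A_3$ (and $K_1+K_2+K_3$ for $\innp{\psi_T,h_T}$), and evaluates each by explicit changes of variables together with a suite of asymptotic lemmas collected in the Appendix; the threshold at $H=\tfrac14$ emerges because one summand contains $\int_{[0,T]^2}|t_2-s_1|^{4H-2}\,\dif t_2\,\dif s_1$, divergent exactly when $4H-2\le-1$. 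Your Fej\'er-kernel route makes the same threshold transparent through the convergence of $G(0)$, and the $O(1)$ error would follow from a single regularity statement for $G$ rather than a battery of tailored estimates. The trade-off is in the constants: once Proposition~\ref{st1-thm-original} is available, the paper obtains $M_H^2$ and $N_H$ by essentially mechanical (if lengthy) real integration, whereas matching $2\pi c_H^2 G(0)$ and its cross analogue to $(H\Gamma(2H))^2 M_H^2$ and $(H\Gamma(2H))^2 N_H$ --- with the poles at $\mi\gamma,\,-\mi\bar\gamma$ and the $\sin(\pi H)$ hidden in $c_H$ --- is, as you acknowledge, genuinely delicate. One caution on the error term: the claim $G\in C^{1,1-2H}$ needs a direct argument, since $G'$ is the convolution of $|x|^{-2H}\sgn(x)$ with a kernel carrying its own $|\eta|^{1-2H}$ singularity at the origin, so standard smoothing does not apply automatically; any positive H\"older exponent for $G'$ near $u=0$ suffices for your purpose, and a careful estimate seems to give exponent $2-4H$ rather than $1-2H$.
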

The computations of the inner products for the functions in Proposition \ref{qq1a} 
are based on a simplified inner product formula of fBm with Hurst parameter $H\in (0,\frac12)$, see \eqref{innp fg3-0}-\eqref{innp fg3-00}. 
In \cite{hnz 19}, the following formula provides a computation for the inner product of two functions with support on $[0, T]$ in the Hilbert space $\mathfrak{H}$:
\begin{align}\label{zh-n-hu}
    \langle f,\,g \rangle_{\FH}&=-\int_{[0,T]^2}  f(t) g'(s)  \frac{\partial R(s,t)}{\partial t}  \dif t \dif s
\end{align} 
where $$R(s,t)=\frac{1}{2}[s^{2H}+t^{2H}-\abs{s-t}^{2H}]$$ is the covariance function of the fBm, 
and 
 \begin{align*}
    \frac{\partial R(s,t)}{\partial t}=H\big[t^{2H-1} -\abs{t-s}^{2H-1}\sgn(t-s)\big].
\end{align*} 
The derivative $g'$ in \eqref{zh-n-hu} should be understood as the distribution derivative of the function $g$, or more precisely, $g'(s) \dif s$ is the measure $\nu_g(\dif s)$ (see Proposition~\ref{st1-thm-original}). In this paper, we will show that the inner product formula \eqref{zh-n-hu} can be simplified using the fact that the term $t^{2H-1}$ in the partial derivative $\frac{\partial R(s,t)}{\partial t}$
does not contribute to the integration value given by \eqref{zh-n-hu}. This simplified formula is summarized in the following Proposition \ref{st1-thm-original}.
 
\begin{proposition}\label{st1-thm-original}
 Denote $\mathcal{V}_{[0,T]}$ as the set of bounded variation functions on $[0,T]$. Let $H \in (0, \frac{1}{2})$. For any two functions in the set $\mathcal{V}_{[0,T]}$, their inner product in the Hilbert space $\mathfrak{H}$ can be expressed as
 \begin{align} 
\langle f,\,g \rangle_{\FH}
&=H \int_{[0,T]^2}  f(t)  \abs{t-s}^{2H-1}\sgn(t-s) \dif t \nu_{g}(\dif s),\quad  \forall f,\, g\in \mathcal{V}_{[0,T]}, \label{innp fg3-0}
\end{align}   where $\nu_{g}(\dif s):=\dif \nu_{g}(s)$, and $\nu_g$ is the restriction on $\left({[0,T]},\mathcal{B}({[0,T]})\right)$ of the signed \textnormal{Lebesgue-Stieljes} measure $\mu_{g^0}$ on $\left(\Rnum,\mathcal{B}(\Rnum)\right)$ of
\begin{equation*}
g^0(x)=\left\{
    \begin{array}{ll}
g(x), & \quad \text{if}~x\in [0,T],\\
0, &\quad \text{otherwise}.
 \end{array}
\right.
\end{equation*}
If $  g'(\cdot) $ is interpreted as the distributional derivative of $g(\cdot)$, then the formula \eqref{innp fg3-0} admits the following representation:
 \begin{align} 
\langle f,\,g \rangle_{\FH}
&=H \int_{[0,T]^2}  f(t) g'(s) \abs{t-s}^{2H-1}\sgn(t-s) \dif t  \dif s,\quad  \forall f,\, g\in \mathcal{V}_{[0,T]}. \label{innp fg3-00}
\end{align}  
\end{proposition}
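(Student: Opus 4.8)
The plan is to begin from the representation \eqref{zh-n-hu} of \cite{hnz 19}, insert the explicit partial derivative
\[
\frac{\partial R(s,t)}{\partial t}=H\big[t^{2H-1}-\abs{t-s}^{2H-1}\sgn(t-s)\big],
\]
and show that the $t^{2H-1}$ summand makes no contribution. Splitting the integral accordingly, and reading the distributional derivative $g'(s)\,\dif s$ as the measure $\nu_g(\dif s)$, the formula \eqref{zh-n-hu} becomes
\[
\langle f,g\rangle_{\FH}=-H\int_{[0,T]^2} f(t)\,t^{2H-1}\,\dif t\,\nu_g(\dif s)+H\int_{[0,T]^2} f(t)\,\abs{t-s}^{2H-1}\sgn(t-s)\,\dif t\,\nu_g(\dif s).
\]
The second term is exactly the right-hand side of \eqref{innp fg3-0}, so the whole proposition reduces to proving that the first term vanishes.

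For that first term I would apply Fubini's theorem, whose hypotheses hold because $f$ is bounded (being of bounded variation), the map $t\mapsto t^{2H-1}$ is integrable on $[0,T]$ since $2H-1>-1$, and $\nu_g$ is a finite signed measure. The integrand separates in the two variables, so the term factorizes as
\[
-H\Big(\int_0^T f(t)\,t^{2H-1}\,\dif t\Big)\,\nu_g([0,T]).
\]
The first factor is finite, hence everything hinges on the single identity $\nu_g([0,T])=0$. The same Fubini argument, together with the uniform bound $\sup_{s\in[0,T]}\int_0^T\abs{t-s}^{2H-1}\,\dif t<\infty$, also justifies the iterated-integral form of the second term and hence the passage to \eqref{innp fg3-0}; rewriting $\nu_g(\dif s)=g'(s)\,\dif s$ at the end yields the distributional form \eqref{innp fg3-00}.

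The step demanding the most care, and the reason the definition of $\nu_g$ through the zero-extension $g^0$ (rather than $g$ itself) is indispensable, is the claim $\nu_g([0,T])=0$. Since $g^0$ is constant and equal to $0$ on $(-\infty,0)$ and on $(T,\infty)$, the Lebesgue-Stieljes measure $\mu_{g^0}$ charges neither open half-line, so its total mass is concentrated on $[0,T]$ and
\[
\nu_g([0,T])=\mu_{g^0}([0,T])=\mu_{g^0}(\R)=g^0(+\infty)-g^0(-\infty)=0-0=0.
\]
This is exactly where one must resist the naive reading $\int_0^T g'(s)\,\dif s=g(T)-g(0)$, which does not vanish in general: the boundary jumps of $g^0$ at $0$ and $T$ are retained by $\nu_g$ as atoms recording the values $g(0)$ and $g(T)$, and it is precisely their inclusion that forces the total mass to cancel. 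I expect the only genuine obstacle to be the careful bookkeeping at the endpoints $0$ and $T$, namely confirming that no mass escapes to the exterior open intervals so that $\nu_g([0,T])$ truly coincides with the full mass $\mu_{g^0}(\R)$; once this is pinned down the first term drops out and \eqref{innp fg3-0} follows.
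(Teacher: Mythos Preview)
Your proposal is correct and follows essentially the same route as the paper: both arguments expand $\partial_t R(s,t)$, and both eliminate the $t^{2H-1}$ summand via the identity $\nu_g([0,T])=0$. The only cosmetic difference is the starting point---the paper begins one step earlier with Jolis's formula $\langle f,g\rangle_{\FH}=\int_{[0,T]^2}R(t,s)\,\nu_f(\dif t)\,\nu_g(\dif s)$ and reaches your starting formula \eqref{zh-n-hu} via the integration-by-parts Lemma~\ref{partial integral}, then invokes the same lemma with $f$ constant to obtain $\nu_g([0,T])=0$, whereas you cite \eqref{zh-n-hu} directly and verify $\nu_g([0,T])=0$ from the zero-extension definition; these are equivalent justifications of the same key fact.
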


The formula  \eqref{innp fg3-00} is novel to our best knowledge. To demonstrate the usefulness of the above inner product formulae, we will show two applications for computing the second moments for $\alpha$-order fBm and the $\alpha$-fractional bridges when $H\in (0,\frac12)$, which are not yet solved till now (see \cite{el2023} and \cite{Es-Sebaiy13}).

\begin{theorem}\label{fbm-bridge exist}
Assume $H\in (0,1)$ and $\alpha\in (0,H)$. Define the stochastic process $\xi_t$ as
\begin{align}\label{xit dingyi}
\xi_t=\int_0^t (T-u)^{-\alpha} \dif B^H_u,\qquad \quad 0\le t< T. 
\end{align}
We have that
$\xi_T:=\lim_{t\uparrow T}\xi_t$ exists in $L^2$ and almost surely, and that 
\begin{align}\label{xiT2qiw}
\E[\xi_T^2]=\frac{H}{H-\alpha}\frac{\Gamma(1-\alpha)\Gamma(2H)}{\Gamma(2H-\alpha)} T^{2(H-\alpha)}.
\end{align}
Moreover, the Gaussian process $(\xi_t)_{t\in[0,T]}$  admits a modification on $[0,T]$ with $(H-\alpha-\epsilon)$-H\"{o}lder continuous paths. 
\end{theorem}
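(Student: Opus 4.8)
The plan is to reduce everything to the Gaussian Hilbert space $\FH$, since each $\xi_t$ is the Wiener integral of the deterministic kernel $f_t(u):=(T-u)^{-\alpha}\1_{[0,t)}(u)$ against $B^H$, so that $\E[\xi_t\xi_s]=\innp{f_t,f_s}_{\FH}$ and, writing $f_{[s,t]}(u):=(T-u)^{-\alpha}\1_{[s,t)}(u)$ for $0\le s\le t<T$, one has $\E[(\xi_t-\xi_s)^2]=\norm{f_{[s,t]}}^2_{\FH}$. For each fixed $t<T$ the kernel $f_t$ is bounded and of bounded variation on $[0,T]$, so Proposition~\ref{st1-thm-original} applies; the crucial difficulty is that the full kernel blows up like $(T-u)^{-\alpha}$ at the right endpoint and is therefore \emph{not} of bounded variation, so one cannot simply insert it into \eqref{innp fg3-00} but must argue by truncation and a careful limit $t\uparrow T$. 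I would base the whole proof on a single uniform increment estimate, namely that there is a constant $C$ with
\begin{align}
\norm{f_{[s,t]}}_{\FH}^2=\E[(\xi_t-\xi_s)^2]\le C\,|t-s|^{2(H-\alpha)},\qquad 0\le s\le t\le T. \label{plan-key}
\end{align}

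To prove \eqref{plan-key} I would split according to the location of $[s,t]$. When $[s,t]$ stays away from $T$ (say $t\le T/2$) the kernel $(T-u)^{-\alpha}$ is bounded with bounded variation there, and either the classical covariance representation (for $H\ge\frac12$) or the formula \eqref{innp fg3-00} (for $H<\frac12$, whose kernel $\abs{u-v}^{2H-1}$ is integrable across the diagonal) yields the fractional-Brownian scaling $\norm{f_{[s,t]}}^2_{\FH}\le C(t-s)^{2H}\le C'|t-s|^{2(H-\alpha)}$. When $[s,t]$ approaches $T$, I would set $s=T-b,\ t=T-a$, substitute $u=T-r$, and rescale $r=b\rho$; since $f_{[s,t]}$ and its distributional derivative carry the homogeneities $b^{-\alpha}$ and $b^{-\alpha-1}$ while the kernel $\abs{u-v}^{2H-1}$ carries $b^{2H-1}$, a power count extracts the factor $b^{2(H-\alpha)}$ in front of a dimensionless integral over $[a/b,1]^2$, which must be bounded uniformly (it degenerates like $(1-a/b)^{2H}$ as the interval thins). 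Checking that the singular kernel integrals remain controlled uniformly in the ratio $a/b$ is the technical heart of the argument.

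Granting \eqref{plan-key}, the remaining claims follow quickly. The bound shows $t\mapsto\xi_t$ is $L^2$-Cauchy as $t\uparrow T$, so the $L^2$ limit $\xi_T$ exists. Because the $\xi_t$ are Gaussian, \eqref{plan-key} upgrades to $\E[\abs{\xi_t-\xi_s}^p]\le C_p|t-s|^{p(H-\alpha)}$ for every $p$, and the Kolmogorov--Chentsov criterion then produces a modification that is $(H-\alpha-\epsilon)$-H\"older continuous on the closed interval $[0,T]$ for every $\epsilon>0$; continuity at $T$ gives $\xi_t\to\xi_T$ almost surely as well, completing both the convergence and the regularity statements.

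It remains to identify the value in \eqref{xiT2qiw}, i.e. to compute $\E[\xi_T^2]=\lim_{t\uparrow T}\norm{f_t}^2_{\FH}$. For $H>\frac12$ this is transparent: $\norm{f_t}^2_{\FH}=H(2H-1)\int_0^t\!\int_0^t (T-u)^{-\alpha}(T-v)^{-\alpha}\abs{u-v}^{2H-2}\,\dif u\,\dif v$, and after $u=T-x,\ v=T-y$ and the substitution $x=ys$ the inner integral becomes the Beta integral $B(1-\alpha,2H-1)$, leaving $\int_0^T y^{2H-2\alpha-1}\dif y=\frac{T^{2(H-\alpha)}}{2(H-\alpha)}$; using $(2H-1)\Gamma(2H-1)=\Gamma(2H)$ collapses the constants to $\frac{H}{H-\alpha}\frac{\Gamma(1-\alpha)\Gamma(2H)}{\Gamma(2H-\alpha)}$, as claimed. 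For $H<\frac12$ the same closed form should emerge from \eqref{innp fg3-00} applied to $f_t$, where the derivative measure $\nu_{f_t}$ contributes the absolutely continuous density $\alpha(T-v)^{-\alpha-1}$ together with boundary atoms at $0$ and at $t$; after the substitution $u=T-x,\ v=T-y$ the interior term reduces to a convergent version of the previous Beta/Gamma integral, convergent precisely because the kernel is $\abs{x-y}^{2H-1}$ rather than $\abs{x-y}^{2H-2}$. The main obstacle here is to show that the atom at $v=t$, whose weight $(T-t)^{-\alpha}$ diverges as $t\uparrow T$, nonetheless contributes a vanishing boundary term in the limit, so that the two expressions agree.
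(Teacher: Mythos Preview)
Your overall plan---establish the uniform increment bound $\E[(\xi_t-\xi_s)^2]\le C|t-s|^{2(H-\alpha)}$, deduce $L^2$ and almost-sure convergence and H\"older regularity via Kolmogorov--Chentsov, then compute $\E[\xi_T^2]$ by letting $t\uparrow T$ in the inner-product formula---matches the paper's structure. But your computation of $\E[\xi_T^2]$ for $H<\frac12$ contains a genuine gap: neither the interior term nor the atom at $v=t$ behaves as you predict. The atom contributes
\[
I_2(0,t)=(T-t)^{-\alpha}\int_0^t (T-v)^{-\alpha}(t-v)^{2H-1}\,\dif v\;\sim\;(T-t)^{-\alpha}\cdot\frac{T^{2H-\alpha}}{2H-\alpha}\;\longrightarrow\;+\infty,
\]
since the integral converges to a positive constant rather than compensating the divergent weight. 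Meanwhile the absolutely continuous piece, after exploiting the sign in the kernel to symmetrize, equals $-\alpha\iint_{0\le u<v\le t}(T-u)^{-\alpha-1}(T-v)^{-\alpha-1}(v-u)^{2H}\,\dif u\,\dif v$, which diverges to $-\infty$ because of the $(T-v)^{-\alpha-1}$ factor near $v=t$; the derivative has added one order of singularity that the milder kernel $|u-v|^{2H-1}$ does not absorb. The two divergences cancel, but the cancellation is invisible until one performs a further integration by parts. The paper's Lemma~\ref{decomp.xi} carries out exactly this step, rewriting $\alpha I_1+I_2$ as $J_1+J_2$, where now $J_2(s,t)=(T-t)^{1-\alpha}\int_s^t(T-u)^{-\alpha-1}(t-u)^{2H-1}\,\dif u$ carries the tame prefactor $(T-t)^{1-\alpha}$ and does vanish in the limit, while $J_1$ supplies the Beta contribution.

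Your scaling route to the increment bound is plausible but runs into the same obstruction: the rescaled atom at $\rho=a/b$ acquires weight $(a/b)^{-\alpha}$, which blows up as $a/b\to 0$, and controlling it uniformly again requires the cancellation with the interior term. The paper sidesteps this by reusing the $J_1+J_2+J_3$ decomposition: once the cancellation has been packaged into Lemma~\ref{decomp.xi}, Proposition~\ref{dianzeduliang} bounds each $J_i(s,t)$ simply by replacing $(T-\cdot)$ with $(t-\cdot)$ in the singular factors, obtaining $J_i(s,t)\le C_i\,|t-s|^{2(H-\alpha)}$ directly, and the same decomposition then yields the exact value of $\E[\xi_T^2]$ by setting $s=0$ and letting $t\uparrow T$.
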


\begin{remark}
    In \cite{el2023}, the process $\xi$ is named as $\alpha'$-order fractional Brownian motion if $\alpha':=-\alpha\in (-1, \infty)$. No singularity appears in the equation \eqref{xit dingyi} when $\alpha \le 0$. If $\alpha \in (0, 1)$ and $H\in (\frac12,1)$, the second moment of $\xi_t$ has been studied in \cite{el2023, Es-Sebaiy13}. They claim as $\alpha \to 0$, the process $\xi$ retrieves the standard fBm when $H> \frac12$ and $\alpha\in (0,H)$. In this paper, we show that it is also true for  $H\in(0, \frac12)$. 
\end{remark}

As another application of the Proposition~\ref{st1-thm-original}, we will consider the $\alpha$-fractional Brownian bridge $Y_t$ (see \cite{Es-Sebaiy13}),
\begin{align*}
    Y_t=(T-t)^{\alpha}\int_0^t (T-u)^{-\alpha} \dif B^H_u,\quad \quad 0\le t\le  T,
\end{align*}
which is the solution to the following equation:
\begin{align*}
   \dif Y_t=-\alpha \frac{Y_t}{T-t}\dif t+ \dif B^H_t\,,\quad 0\le t<T\,;  \quad \alpha>0, \ Y_0=0.
\end{align*}
The second moment of a certain scaling limit of $Y_t$ has been studied in \cite{Es-Sebaiy13} for the case of $H\in (\frac12,1)$. By applying \eqref{innp fg3-00}, we obtain the result for the case $H \in (0, \frac12)$ in Theorem \ref{fbm bridge 2 exist}.

\begin{theorem}\label{fbm bridge 2 exist}
Assume $H\in (0,1)$ and $\alpha\in (H,1)$. Define 
\begin{align}\label{eta dingyi}
\tilde{Y}_t:=\frac{Y_t}{(T-t)^H}=(T-t)^{\alpha-H} \int_0^t (T-u)^{-\alpha} \dif B^H_u,\qquad \quad 0\le t< T. 
\end{align} We have that $\tilde{Y}_T:=\lim_{t\uparrow T}\tilde{Y}_t$ exists in $L^2$
and that
\begin{align}
\E[\tilde{Y}_T^2]=\frac{\alpha H}{\alpha-H}B(2H,\,1+\alpha-2H),\label{zetaT2qiw}
\end{align}
and
\begin{align}
 \E[B_s^H \tilde{Y}_T]=0,\quad \forall s\in [0,T]. \label{zetaT2qiw-2}
\end{align}
\end{theorem}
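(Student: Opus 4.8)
The plan is to represent $\tilde Y_t$ as a Wiener integral against $B^H$ and reduce the entire statement to the inner product formula of Proposition~\ref{st1-thm-original}. Set $\phi_t(u):=(T-u)^{-\gamma}\mathbf{1}_{[0,t]}(u)$ and $g_t:=(T-t)^{\gamma-H}\phi_t$, so that by \eqref{eta dingyi} $\tilde Y_t=\int_0^T g_t(u)\,\dif B^H_u$. For each fixed $t<T$ the map $u\mapsto(T-u)^{-\gamma}$ is $C^1$ and bounded on $[0,t]$, hence $g_t\in\mathcal{V}_{[0,T]}$ and the Wiener isometry gives $\E[\tilde Y_t\tilde Y_s]=\innp{g_t,g_s}_{\FH}$. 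Applying \eqref{innp fg3-0} (equivalently \eqref{innp fg3-00}), I would expand this through the three pieces of the associated measure $\nu_{g_t}$: an absolutely continuous part $(T-t)^{\gamma-H}\gamma(T-u)^{-\gamma-1}$ on $(0,t)$, an atom at $u=0$ of mass $(T-t)^{\gamma-H}T^{-\gamma}$, and an atom at $u=t$ of mass $-(T-t)^{-H}$. This splits $\innp{g_t,g_t}_{\FH}$ into an absolutely-continuous double integral, a boundary ($u=0$) term, and a diagonal ($u=t$) term.

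The decisive device is the scaling substitution $v=T-(T-t)a$, $w=T-(T-t)b$ with $a,b\in(1,\,T/(T-t))$, which makes the $(T-t)$-homogeneity explicit: each of the three integrals becomes a fixed power of $T-t$ times a $t$-independent integral over $\{1<a,b<T/(T-t)\}$. The factor $(T-t)^{2(\gamma-H)}$ exactly cancels the $(T-t)$-powers coming from the absolutely-continuous and diagonal terms, whereas the boundary term keeps an extra factor $(T-t)^{2(\gamma-H)}\to0$ (here $\gamma>H$ is used) and so drops out of the limit. This confirms that the behaviour near $0$ is irrelevant and the limit is governed entirely by the singularity at $T$.

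The main obstacle is the limit $t\uparrow T$, because for $\gamma\in(H,2H]$ several of the rescaled integrals diverge individually as the cutoff $T/(T-t)\to\infty$ (their integrands decay only like $a^{2H-\gamma-1}$ at infinity). The remedy is to combine terms \emph{before} removing the cutoff. Integrating the absolutely-continuous double integral by parts in its inner variable produces a boundary term that, added to the diagonal term, telescopes into the convergent difference $\int_1^\infty a^{-\gamma-1}(a-1)^{2H-1}\,\dif a=B(2H,1+\gamma-2H)$ (via $a\mapsto 1/x$), the individual divergences cancelling exactly; the leftover double integral contributes a radial factor $\int_1^\infty a^{2H-1-2\gamma}\,\dif a=\frac{1}{2(\gamma-H)}$, finite precisely because $\gamma>H$, which is the source of the pole $\frac{1}{\gamma-H}$ in \eqref{zetaT2qiw}. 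Assembling the pieces yields the constant in \eqref{zetaT2qiw}. Running the identical computation on the cross-covariance $\E[\tilde Y_t\tilde Y_s]=(T-t)^{\gamma-H}(T-s)^{\gamma-H}\innp{\phi_t,\phi_s}_{\FH}$ shows it converges to the same constant as $\min(s,t)\uparrow T$; hence $\E[(\tilde Y_t-\tilde Y_s)^2]\to0$, so $\tilde Y_T=\lim_{t\uparrow T}\tilde Y_t$ exists in $L^2$ with the second moment \eqref{zetaT2qiw}.

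Finally, \eqref{zetaT2qiw-2} is the soft part. Since $B^H_s=\int_0^T\mathbf{1}_{[0,s]}\,\dif B^H$, the isometry and \eqref{innp fg3-0} give $\E[B^H_s\tilde Y_t]=(T-t)^{\gamma-H}\innp{\mathbf{1}_{[0,s]},\phi_t}_{\FH}$, where $\innp{\mathbf{1}_{[0,s]},\phi_t}_{\FH}=H\int_0^t(T-v)^{-\gamma}\big(v^{2H-1}-\abs{v-s}^{2H-1}\sgn(v-s)\big)\,\dif v$. For fixed $s<T$ this integral stays bounded as $t\uparrow T$: the bracket is bounded near $T$ and $(T-v)^{-\gamma}$ is integrable there because $\gamma<1$, while near $0$ the singularity $v^{2H-1}$ is integrable because $H>0$. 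Multiplying by the prefactor $(T-t)^{\gamma-H}\to0$ (using $\gamma>H$) and invoking $\E[B^H_s\tilde Y_t]\to\E[B^H_s\tilde Y_T]$ from the $L^2$ convergence just established yields $\E[B^H_s\tilde Y_T]=0$.
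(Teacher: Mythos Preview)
Your proposal is correct and follows essentially the same route as the paper: both apply the inner product formula \eqref{innp fg3-00}, perform the integration by parts that converts the signed double integral into positive pieces (the paper packages this as Lemma~\ref{decomp.xi}, giving the $J_1,J_2,J_3$ decomposition, whereas you describe the same cancellation directly), and then use the scaling $v\mapsto T-(T-t)a$ to extract the Beta integrals, with the boundary-at-$0$ term killed by the prefactor $(T-t)^{2(\gamma-H)}$; the orthogonality argument is likewise identical. The only substantive addition in your outline is the explicit cross-covariance computation establishing the $L^2$-Cauchy property of $(\tilde Y_t)$, which the paper's proof does not spell out.
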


The paper is organized as follows. In Section~\ref{sec-002}, we introduce some elements about the complex isonormal Gaussian process and prove our new inner product formula \eqref{innp fg3-00} when $H\in (0,\frac12)$ given in Proposition ~\ref{st1-thm-original}. In Section~\ref{sec.3}, we will apply this inner product formula to prove Proposition~\ref{qq1a}. In Section~\ref{sec.4}, we will prove our main result Theorem~\ref{com-ou parameter estimate}, i.e., the strong consistency and the asymptotic normality of the LSE $\hat{\gamma}_T$. In Section~\ref{sec.5}, we will prove Theorem~\ref{fbm-bridge exist} and Theorem~\ref{fbm bridge 2 exist}, the two applications of our new inner product formula \eqref{innp fg3-00} for $H\in (0,\frac12)$. Several technical inequalities and asymptotic approximations of integrals are provided in the Appendix.

The symbol $C$ throughout the paper stands for a generic constant, whose value can change from one line to another. The notation $g(u)=O(1)$ means that there exist constants $M$  and $a$ such that the real-valued function $g$ satisfies $\abs{g(u)}\le M$ for all $u>a$.  For a positive function $\phi$, we say that a real-valued function $f$ satisfies $f=o(\phi)$ if $\frac{f(u)}{\phi(u)}\to 0$ as $u\to \infty$.

\section{Hilbert space associated with fBm and complex isonormal Gaussian process}\label{sec-002}

The fractional Brownian motion $(B^H_t)_{t\in \mathbb{R}}$ is defined on a complete probability space $(\Omega, \mathcal{F}, P)$. Denote by $\mathfrak{H}$ the associated reproducing kernel Hilbert space, 
which is defined
as the closure of the space of all real-valued step functions on $\mathbb{R}$ endowed with the inner product
\begin{align*}
    \langle \1_{[a,b]},\,\1_{[c,d]}\rangle_{\FH}=\E\big(( B^H_b-B^H_a) ( B^H_d-B^H_c) \big),
\end{align*}
under the convention that $\1_{[0, t]} = -\1_{[t, 0]}$. Denote the isonormal process on the same probability space $(\Omega, \mathcal{F}, P)$ by
$$B^H=\Big\{B^H(h)=\int_{\mathbb{R}}h(t)\dif B^H_t, \quad h \in \mathfrak{H}\Big\}.$$ It is indexed by the elements in the Hilbert space $\mathfrak{H}$, and satisfies the It\^{o}'s isometry:
\begin{align}\label{G extension defn}
\mathbb{E}(B^H(g)B^H(h)) = \langle g, h \rangle_{\mathfrak{H}}, \quad
\forall g, h \in \mathfrak{H}.
\end{align} 
If $H\in (\frac12,1)$ or the intersection of the supports of two elements $f,\,g \in \mathfrak{H}$ is of Lebesgue measure zero (see \cite{Mishura}), we have
 \begin{align} \label{innp fg3-zhicheng0}
\langle f,\,g \rangle_{\FH}=H(2H-1)\int_{\mathbb{R}^2}  f(t)g(s) \abs{t-s}^{2H-2} \dif t  \dif s. \end{align}

Next, denote $\mu_F$ the signed Lebesgue-Stieltjes measure of the bounded variation function $F$. Suppose that $[a,b]$ is a compact interval with positive length. Denote $\mathcal{V}_{[a,b]}$ as the set of bounded variation functions on $[a,b]$. For $g\in\mathcal{V}_{[a,b]}$, denote $\nu_g$ as the restriction on $\left({[a,b]},\mathcal{B}({[a,b]})\right)$ of the signed \textnormal{Lebesgue-Stieljes} measure $\mu_{g^0}$ on $\left(\Rnum,\mathcal{B}(\Rnum)\right)$   {of}
\begin{equation*}
g^0(x)=\left\{
    \begin{array}{ll}
g(x), & \quad \text{if}~x\in [a,b],\\
0, &\quad \text{otherwise}.
 \end{array}
\right.
\end{equation*}
The measure $\nu_g$ is similarly defined as in \cite{Jolis 07}. {It may have atoms, i.e.,
 $\nu_g(\{0\})=g(a),\, \nu_g(\{T\})=-g(b-)$
.} The following integration by parts formula pertaining to the measure $\nu_g$ is extracted from 
\cite{chenli2023}. Essentially, this formula is just a variation of integration by parts for bounded variation functions (see \cite{Foll 99, WZ 1977}).
\begin{lemma}\label{partial integral}
If $f: [a,b]\rightarrow\R$ is absolutely continuous on~$[a,b]$ and $g\in\mathcal{V}_{[a,b]}$,  then we have
\begin{align}
  -\int_{[a,b]}g(t) f'(t)\dif t=\int_{[a,b]}f(t) {\nu_g}(\dif t), \label{01}
\end{align}where $f'(t)$ is the  derivative of $f(t)$.
\end{lemma}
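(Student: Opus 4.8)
The plan is to establish \eqref{01} by a direct application of Fubini's theorem, exploiting the zero-extension $g^0$ to represent $g$ as the cumulative distribution function of the measure $\nu_g$. Two structural facts drive the argument. First, because $g^0$ vanishes on $(-\infty,a)$, one has for every $t\in[a,b)$ that
\begin{equation*}
g(t)=\mu_{g^0}\big((-\infty,t]\big)=\nu_g\big([a,t]\big)=\int_{[a,b]}\mathbf{1}_{\{s\le t\}}\,\nu_g(\dif s).
\end{equation*}
Second, because $g^0$ also vanishes on $(b,\infty)$, the measure $\mu_{g^0}$ is supported in $[a,b]$ and carries vanishing total mass, so $\nu_g([a,b])=\mu_{g^0}(\Rnum)=0$. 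Both identities are precisely why the measure is generated from $g^0$ and not from $g$ on $[a,b]$ alone: extending by zero installs compensating atoms at the endpoints whose masses encode the boundary terms of classical integration by parts.

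Concretely, I would first fix the right-continuous version of $g$; its jump set is at most countable, hence Lebesgue-null, so the left-hand integral in \eqref{01} is unchanged while $\nu_g$ remains the intrinsic distributional derivative of $g^0$. Substituting the cumulative representation of $g(t)$ (valid for a.e.\ $t$, the endpoint $t=b$ being negligible) into the left side of \eqref{01} and invoking Fubini's theorem---legitimate since $g\in\mathcal{V}_{[a,b]}$ makes $\nu_g$ a finite signed measure and $f$ absolutely continuous makes $f'\in L^1([a,b])$---one swaps the order of integration. Using $\int_s^b f'(t)\,\dif t=f(b)-f(s)$ then yields
\begin{equation*}
-\int_{[a,b]}g(t)f'(t)\,\dif t
=-\int_{[a,b]}\big(f(b)-f(s)\big)\,\nu_g(\dif s)
=-f(b)\,\nu_g([a,b])+\int_{[a,b]}f(s)\,\nu_g(\dif s),
\end{equation*}
and the first term drops out because $\nu_g([a,b])=0$, giving exactly \eqref{01}.

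The step I expect to demand the most care is the endpoint bookkeeping underpinning the two structural facts---namely verifying that $\mu_{g^0}$ is supported in $[a,b]$, that the cumulative identity $g(t)=\nu_g([a,t])$ persists up to (but not necessarily at) $t=b$, and that the atoms $\nu_g(\{a\})$ and $\nu_g(\{b\})$ combine so that the total mass telescopes to zero. This is where the classical boundary contribution $f(b)g(b)-f(a)g(a)$ is silently absorbed, and it is sensitive to the left/right-continuity convention chosen for $g$. As an independent check I would rederive \eqref{01} from the Riemann--Stieltjes integration-by-parts identity $\int_{(a,b]}f\,\dif\mu_g+\int_a^b gf'\,\dif t=f(b)g(b)-f(a)g(a)$ (available because $f$ is continuous), and confirm that its boundary terms coincide with the endpoint atoms of $\nu_g$; concurrence of the two routes certifies that the conventions have been handled correctly.
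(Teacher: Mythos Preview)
The paper does not supply its own proof of this lemma; it merely cites \cite{withdingzhen} and \cite{chenli2023}. So there is no in-text argument to compare against, and your proposal stands on its own.

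Your argument is correct and is, in fact, the standard way to obtain this identity. The two structural facts you isolate are exactly the right ones: the zero-extension forces $\mu_{g^0}$ to be supported in $[a,b]$ with total mass zero (this is where the boundary terms hide), and the cumulative representation $g(t)=\nu_g([a,t])$ holds at every point of right-continuity of $g$, hence almost everywhere since a BV function has at most countably many jumps. With these in hand, Fubini (finite signed measure against $f'\in L^1$) and the fundamental theorem of calculus finish the job just as you outline. Your caution about the endpoint atoms is well placed---the convention for the Lebesgue--Stieltjes measure determines whether the atom $-g(b)$ lands at $\{b\}$ or just to its right---but since you integrate $f'$ over $[s,b]$ regardless, and $\nu_g([a,b])=0$ in any convention, the result is insensitive to this choice. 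The Riemann--Stieltjes cross-check you propose is a good sanity test and reproduces the same cancellation of boundary terms.
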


By taking $f$ as a constant function, Lemma~\ref{partial integral} implies
\begin{equation}\label{lamma21a}
   \int_{[a, b]} v_g(dt) = 0.
\end{equation}
We would like to point out the well-known integration result (see \cite[p.108]{Foll 99}) is a special case of the above lemma. Namely, for two absolutely continuous functions $f$ and $g$  on~$[a,b]$, 
\begin{align*}
    -\int_{[a,b]}g(t) f'(t)\dif t=\int_{[a,b]}f(t) g'(t)\dif t+f(a)g(a)-f(b)g(b).
\end{align*}
The right hand of the above identity is consistent with the integral in \eqref{01}, as
$$\int_{[a,b]}f(t) \dif \nu_g(t)=\int_{[a,b]}f(t)  \nu_g(\dif t),$$
and in this case $\nu_g(\dif t)=g'(t)\dif t+g(t)\big(\delta_a(t)-\delta_b(t)\big)\dif t$. 

Next, we will prove the novel inner product formula given in Proposition~\ref{st1-thm-original}.\\

{\bf Proof of Proposition~\ref{st1-thm-original}: } 
Theorem 2.3 of \cite{Jolis 07} implies that  $\forall f,\, g\in \mathcal{V}_{[0,T]}$, \begin{align}  \label{inner product 001}
\innp{f,g}_{\FH}=\int_{[0,T]^2} R(t,s) \nu_f( \dif t)   \nu_{g}( \dif s)=\int_0^T \Big( \int_0^T R(t,s) \nu_f( \dif t)  \Big) \nu_{g}( \dif s).
\end{align} 

Applying Lemma~\ref{partial integral} to the functions $R(\cdot, s)$ and $f(\cdot)$, we have
 \begin{align*} 
\langle f,\,g \rangle_{\FH}&=-\int_{[0,T]}  f(t)  \dif t \int_{[0,T]}\frac{\partial R(s,t)}{\partial t} \nu_{g}(\dif s)\notag \\
&=H \int_{[0,T]}  f(t)  \dif t \int_{[0,T]} \big[  \abs{t-s}^{2H-1}\sgn(t-s) -t^{2H} \big] \nu_{g}(\dif s) \\
&=H \int_{[0,T]}  f(t)  \dif t \int_{[0,T]} \abs{t-s}^{2H-1}\sgn(t-s) \nu_{g}(\dif s), 
\end{align*}where 
in the last line we have applied equation \eqref{lamma21a}.
 {\hfill\large{$\Box$}}

For example, the type of functions $g=h \1_{[a,b]}$ is used in this paper, where $0\le a<b\le T$ and $h$ is a differentiable function. In this case, the Lebesgue-Stieljes signed measure $\nu_{g}$ on  $([0,T ], \mathcal{B}([0,T ]))$ has an expression:
\begin{equation}\label{jieshi01}
  \nu_g(\dif s)=  h'(s)\cdot \1_{[a,b]}(s)\dif s+  h(s)\cdot \big(\delta_a(s)-\delta_b(s)\big)\dif s,
\end{equation}
where $\delta_a(\cdot)$ is the Dirac delta function centered at a point $a$. Correspondingly,
\begin{align}
\langle f,\,g \rangle_{\FH}&=H\int_{[0,T]}  f(t)  \dif t \int_{[a,b]}  h'(s) \abs{t-s}^{2H-1}\sgn(t-s) \dif s\notag\\
&+H\left[ \int_{[0,T]}  f(t)  \left[  h(a) \abs{t-a}^{2H-1}\sgn(t-a)  - h(b)  \abs{t-b}^{2H-1}\sgn(t-b)\right] \dif t\right].
\end{align}

\subsection{Complex Wiener-It\^o multiple integrals and complex fourth moment theorem}
In this subsection, we will introduce some essential definitions for understanding the complex Wiener-It\^o integrals and the complex fourth moment theorem. Later, the fourth moment theorem will be applied to prove our main result Theorem \ref{com-ou parameter estimate}. Throughout the context the notation $\FH$ is used for denoting a complex separable Hilbert space with inner product $\innp{\cdot,\cdot}_{\FH}$, linear in the first argument and conjugate linear in the second argument.
\begin{definition}
 Let $z = x+ \mi y$ with $x, y\in\Rnum$. Complex Hermite polynomials $J_{m,n}(z)$ are given by its generating function:
 \begin{align*}
     \exp\set{\lambda \bar{z}+\bar{\lambda} z -2\abs{\lambda}^2}=\sum_{m=0}^{\infty}\sum_{n=0}^{\infty}\frac{\bar{\lambda}^m\lambda^n}{m!n!}J_{m,n}(z).
 \end{align*}
\end{definition} 
It is clear that the complex Hermite polynomials satisfy
\begin{align}\label{dongegx-key}
    \overline{J_{m,n}(z)}=J_{n,m}(z).
\end{align}
\begin{definition}
    A complex Gaussian isonormal process $\set{Z(h):\,h\in \FH}$ over the complex Hilbert space $\FH$, is a centered symmetric complex Gaussian family in $L^2(\Omega)$ such that 
\begin{align*}
    \E[Z(h)^2]=0,\quad \E[Z(g)\overline{Z(h)}]=\innp{g, h}_{\FH}, \quad \forall g, h\in \FH. 
\end{align*}
\end{definition}

Now we can construct a special subspace of $L^2(\Omega)$, known as Wiener-It\^o chaos.

\begin{definition}\label{def.complex.ito}
    For each $m, n \ge 0$, let $\mathcal{H}_{m,n}$ indicate the closed linear subspace of $L^2(\Omega)$ generated by the random variables of the type    \begin{align*}
    \set{J_{m,n}(Z(h)):\, h\in \FH, \, \norm{h}_{\FH}=\sqrt{2}}.
\end{align*}
The space $\mathcal{H}_{m,n}$ is called 
the $(m,n)$-th Wiener-It\^o chaos of $Z$.
\end{definition} 

Next, we would like to define the complex Wiener-It\^o integrals using the linear isometry between the Hilbert space and the Wiener-It\^o chaos. To start, we denote by $\FH^{\otimes n}$ and $\FH^{\odot n}$ the $n$th tensor product and the $n$th symmetric tensor product of $\FH$ respectively. For example, for $n=2$, let $\mathcal{E}$ be the set of all finite linear combinations of the conjugate bilinear forms $ h_1\otimes h_2$ over $\FH\times \FH$:
\begin{equation*}
   [h_1\otimes h_2](x,y)=\innp{h_1,x}_{\FH}\innp{h_2,y}_{\FH},\quad (x,y)\in  \FH\times \FH.
\end{equation*}
We define an inner product $\innp{\cdot,\cdot}_{\mathcal{E}} $ on $\mathcal{E}$ by defining
\begin{equation*}
   \innp{h_1\otimes h_2,\,f_1\otimes f_2}_{\mathcal{E}}=\innp{h_1, f_1}_{\FH}\innp{h_2, f_2}_{\FH}
\end{equation*}and extending by linearity to $\mathcal{E}$.
The tensor product $\FH^{\otimes 2}$ of $\FH$ is defined as the completion of $\mathcal{E}$ under the inner product $\innp{\cdot,\cdot}_{\mathcal{E}}$. The symmetric tensor product $h_1\odot h_2$ is defined as $\frac12(h_1\otimes h_2+h_2\otimes h_1)$ (See \cite{simon}). 

\begin{definition}\label{chjfendy}
    For each $m, n \ge 0$, the linear mapping $$I_{m,n}(h^{\otimes m}\otimes \bar{h}^{\otimes n})=J_{m,n}(Z(h)),\quad h \in \mathfrak{H}$$ is called the complex Wiener-It\^o stochastic integral. The mapping $I_{m,n}$ provides a linear isometry between $\mathfrak{H}^{\odot m}\otimes \mathfrak{H}^{\odot n}$ (equipped with the norm $\frac{1}{\sqrt{m!n!}}\|\cdot\|_{\mathfrak{H}^{\otimes (m+n)}}$) and $\mathcal{H}_{m,n}$. Here $\mathcal{H}_{0,0} = \mathbb{R}$ and $I_{0,0}(x)=x$ by convention.
\end{definition}
From the above definition and the identity \eqref{dongegx-key}, if $f\in \mathfrak{H}^{\odot m}\otimes \mathfrak{H}^{\odot n}$ and $g\in \mathfrak{H}^{\odot n}\otimes \mathfrak{H}^{\odot m}$ satisfies a conjugate symmetry relation
\begin{align}\label{chongjifengongegx}
    g(t_1,\dots, t_n; s_1,\dots, s_m)=\overline{f(s_1,\dots, s_m; t_1,\dots, t_n )},
\end{align}
their complex Wiener-It\^o integrals must satisfy
\begin{align*}
    \overline{I_{m,n}(f)}=I_{n,m}(g).
\end{align*} 

The last concept introduced in this section would be the contraction of two functionals in the tensor product of Hilbert space $\mathfrak{H}$.
\begin{definition}
When $\FH=L^2(A,\mathcal{B}, \nu)$ with $\nu$ non-atomic, the $(i, j)$ contraction of two  symmetric functions $f\in \mathfrak{H}^{\odot a}\otimes \mathfrak{H}^{\odot b},\, g\in \mathfrak{H}^{\odot c}\otimes \mathfrak{H}^{\odot d} $ is defined as
\begin{align*}
  &\quad f\otimes_{i,j} g (t_1,\dots, t_{a+c-i-j};s_1,\dots, s_{b+d-i-j})\\
  &=\int_{A^{i+j}}\nu^{i+j}(\dif u_1\dots \dif u_i\dif v_1\dots \dif v_j)f(t_1,\dots, t_{a-i}, u_1, \dots , u_i; s_1,\dots, s_{b-j}, v_1,\dots, v_j)\\
  &\times g(t_{a-i+1},\dots, t_{a+c-i-j}, v_1,\dots, v_j; s_{b-j+1},\dots, s_{b+d-i-j}, u_1, \dots , u_i).
\end{align*} 
\end{definition}

The following complex fourth moment theorem simplifies the one in \cite{chw} (see \cite{withhuiping}).
\begin{theorem}[Fourth Moment Theorem]\label{com FMT}
    Let $\set{F_k=I_{m,n}(f_k)}$ be a sequence of $(m, n)$-th complex Wiener-It\^o integrals, where $m, n$ are fixed and $m + n \ge 2$. Suppose that as $k\to\infty$, $\E[\abs{F_k}^2]\to \sigma^2$ and $\E[{F^2_k}]\to c+\mi b$, where $\abs{\cdot}$ is the absolute value (or modulus) of a complex number and $c, b\in \Rnum$. Then the following statements are equivalent:
   \begin{description}
   \item [(i)] The sequence $(\RE F_k, \IM  F_k)$ converges in law to a bivariate normal distribution with variance-covariance matrix $\tensor{C}=\frac12
   \begin{bmatrix}
       \sigma^2+c, & b\\
       b,&\sigma^2-c
   \end{bmatrix}$.
    \item [(ii)] $\E[\abs{F_k}^4]\to c^2+b^2+2\sigma^4$.
    \item [(iii)] $\norm{f_k\otimes_{i,j}h_k}_{\FH^{\otimes(2(l-i-j))}}\to 0$ for any $0 <
i+j \le l-1$ where $l= m+n$ and $h_k$ is the kernel of $\bar{F}_k$, i.e., $\bar{F}_k=I_{n,m}(h_k)$. 
   \end{description}
\end{theorem}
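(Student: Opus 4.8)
\emph{Proof strategy.} The plan is to reduce the complex statement to the real multivariate fourth moment theorem of Peccati and Tudor applied to the real random vector $(\RE F_k,\IM F_k)$, after installing a dictionary between the complex and the real chaos pictures, and to handle the contraction condition (iii) through the product formula for complex multiple integrals. First I would realize the complex isonormal process $Z$ over the complexification of a real Hilbert space $\FH_0$ by means of an ordinary (real) isonormal Gaussian process; under this realization each complex Hermite polynomial $J_{m,n}(Z(h))$ is a real polynomial of total degree $l:=m+n$ in the underlying Gaussians, so that $U_k:=\RE F_k$ and $V_k:=\IM F_k$ both lie in the $l$-th real Wiener chaos. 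I would record the real symmetric kernels of $U_k$ and $V_k$ explicitly in terms of $f_k$ and of the conjugate-symmetric rearrangement $h_k$ (recall $\bar F_k=I_{n,m}(h_k)$ with $h_k$ obtained from $f_k$ via \eqref{chongjifengongegx}).

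\emph{Moment dictionary.} Using $\E[Z(h)^2]=0$, the two standing hypotheses translate into convergence of the covariance matrix of $(U_k,V_k)$: from $\E[|F_k|^2]=\E[U_k^2+V_k^2]\to\sigma^2$ and $\E[F_k^2]=\E[U_k^2-V_k^2]+2\mi\,\E[U_kV_k]\to c+\mi b$ one gets $\E[U_k^2]\to\tfrac{\sigma^2+c}{2}$, $\E[V_k^2]\to\tfrac{\sigma^2-c}{2}$ and $\E[U_kV_k]\to\tfrac{b}{2}$, i.e.\ the covariance of $(U_k,V_k)$ tends to $\tensor C$. A Wick (Isserlis) computation for a centered Gaussian vector with covariance $\tensor C$ then yields $\E[(U^2+V^2)^2]=2\sigma^4+c^2+b^2$, which identifies the prospective limit in (ii) as the exact Gaussian fourth moment and shows that the three conditions are mutually compatible.

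\emph{Equivalence $(i)\Leftrightarrow(ii)$.} Since the covariances converge, the real bivariate fourth moment theorem (Peccati--Tudor, combined with the Nualart--Peccati equivalence inside a fixed chaos) applies to $(U_k,V_k)$: joint convergence to $\mathcal N(0,\tensor C)$ is equivalent to convergence of the fourth moments of the components, hence to $\E[(U_k^2+V_k^2)^2]\to 2\sigma^4+c^2+b^2$, which is exactly (ii). The direction $(i)\Rightarrow(ii)$ also follows directly from hypercontractivity: $|F_k|^4$ is uniformly integrable within the fixed chaos, so convergence in law forces convergence of the fourth moments to the Gaussian value computed above.

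\emph{Equivalence $(ii)\Leftrightarrow(iii)$ and the main obstacle.} The crux is to match (ii) with the complex contraction condition (iii). The clean tool is the product formula for complex multiple integrals applied to $|F_k|^2=F_k\bar F_k=I_{m,n}(f_k)\,I_{n,m}(h_k)=\sum_{i,j}c_{i,j}\,I_{l-i-j,\,l-i-j}(f_k\otimes_{i,j}h_k)$, in which precisely the contractions $f_k\otimes_{i,j}h_k$ appear. Since $|F_k|^2$ is real, $\E[|F_k|^4]=\E\big[|F_k|^2\,\overline{|F_k|^2}\big]$ and orthogonality of the complex chaoses collapses this to a sum of squared norms; using the given convergences $\E[|F_k|^2]\to\sigma^2$ and $\E[F_k^2]\to c+\mi b$ to evaluate the two extreme ($i+j\in\{0,l\}$) terms as $2\sigma^4+c^2+b^2+o(1)$, the remainder is a \emph{nonnegative} linear combination of the intermediate norms $\norm{f_k\otimes_{i,j}h_k}^2$ over $0<i+j\le l-1$. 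Hence $\E[|F_k|^4]\to 2\sigma^4+c^2+b^2$ if and only if every such contraction norm tends to $0$, which is (iii); the same expansion also matches these complex contractions with the real contractions of the kernels of $U_k,V_k$ governing Step 3. I expect the book-keeping in this expansion to be the main difficulty: one must track the conjugate-symmetry constraint \eqref{chongjifengongegx}, pin down the combinatorial coefficients $c_{i,j}$ so that the extreme terms assemble exactly into $2\sigma^4+c^2+b^2$ and the cross terms reorganize into nonnegative squares, and verify that it is the $h_k$-contractions (rather than the $f_k\otimes_{i,j}f_k$, which are absorbed by the assumed convergence of the pseudo-variance $\E[F_k^2]$) that carry the normality. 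Once this is in place the cycle $(i)\Leftrightarrow(ii)\Leftrightarrow(iii)$ closes.
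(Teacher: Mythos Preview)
The paper does not prove Theorem~\ref{com FMT}; it merely states it as an existing result, with the sentence ``The following complex fourth moment theorem simplifies that of \cite{chw} (see \cite{withhuiping})'' and then applies it as a black box in the proof of Theorem~\ref{com-ou parameter estimate}. So there is no ``paper's own proof'' to compare your proposal against.

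That said, your sketch follows exactly the route taken in the cited references (\cite{chenliu}, \cite{chw}, \cite{withhuiping}): realize the complex chaos inside a real one so that $\RE F_k,\IM F_k$ live in the $l$-th real chaos, use the covariance dictionary to reduce (i)$\Leftrightarrow$(ii) to the Peccati--Tudor multivariate fourth moment theorem, and derive (ii)$\Leftrightarrow$(iii) from the complex product formula $I_{m,n}(f_k)\,I_{n,m}(h_k)=\sum_{i,j}c_{i,j}I_{l-i-j,l-i-j}(f_k\otimes_{i,j}h_k)$ together with the orthogonality of complex chaoses. Your identification of the delicate point is accurate: in the expansion of $\E[|F_k|^4]$ the $(i+j=l)$ term contributes $(\E|F_k|^2)^2\to\sigma^4$, while the $(i+j=0)$ term is $\|I_{l,l}(f_k\otimes h_k)\|_{L^2}^2$, and showing that this last quantity equals $\sigma^4+c^2+b^2+o(1)$ (so that the two extreme terms together give $2\sigma^4+c^2+b^2$) requires computing the $L^2$-norm of $I_{l,l}(f_k\otimes h_k)$ via the isometry and recognizing the resulting inner products as $|\E[F_k^2]|^2+|\E[|F_k|^2]|^2$; this is where the conjugate symmetry \eqref{chongjifengongegx} and the combinatorial coefficients must be tracked carefully. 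One small caveat in your (ii)$\Rightarrow$(i) step: knowing only $\E[(U_k^2+V_k^2)^2]\to 2\sigma^4+c^2+b^2$ does not by itself force $\E[U_k^4]$ and $\E[V_k^4]$ to converge separately, so you cannot invoke Peccati--Tudor directly from (ii); you really must pass through (iii) (the vanishing of the mixed contractions $f_k\otimes_{i,j}h_k$ implies vanishing of the real contractions of the kernels of $U_k,V_k$), which is indeed the logical structure you end with.
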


\section{The second moments of two double Wiener-It\^{o} Integrals}\label{sec.3}

The goal of this section is to prove Proposition~\ref{qq1a} for computing the second moments of the double Wiener-It\^o integrals. 
We use $\delta_a(\cdot)$ to denote the Dirac delta function centered at point $a$. The Heaviside step function $\mathrm{H}(x)$ is defined as
\begin{equation*}
\mathrm{H}(x)=\left\{
      \begin{array}{ll}
 1, & \quad \text{if } x\ge 0,\\
0, &\quad \text{if } x<0.
     \end{array}
\right.
\end{equation*}
 The distributional derivative of the Heaviside step function is the Dirac delta function: $$\frac{\dif \mathrm{H}(x)}{\dif x}=\delta_0(x).$$
 Hence, for any $-\infty<a<b<\infty$,
 \begin{align}\label{distribution deriva}
 \frac{\dif }{\dif x} \mathbf{1}_{[a,b)}(x)= \frac{\dif }{\dif x} [\mathrm{H}(x-a)-\mathrm{H}(x-b)]=\delta_a(x)-\delta_b(x).
 \end{align}
 This fact implies that for the multivariable function $q(t,s):= \mathbf{1}_{\{0\le s\le t\le T\}} $,
\begin{equation}
\frac{\partial}{\partial t}q(t,s)=\mathbf{1}_{[0,T]}(s)\big( \delta_s(t) -\delta_T(t)\big), \quad \frac{\partial}{\partial s}q(t,s)=\mathbf{1}_{[0,T]}(t)\big(\delta_0(s) - \delta_t(s) \big).
\label{qstdaoshu}
 \end{equation}Clearly, these partial derivatives should be understood as distributional derivatives.
Similarly, for the function $p(u,v):= \mathbf{1}_{ (s,t)}(u)\mathbf{1}_{ (0,u)}(v) $, we have the distributional derivatives
\begin{equation}
\frac{\partial}{\partial u}p(u,v)
= \mathbf{1}_{ (0,t)}(v)\big(\delta_{v\vee s}(u)-\delta_t(u)\big) , \quad
\frac{\partial}{\partial v}p(u,v)=\mathbf{1}_{ (s,t)}(u)\big(\delta_0(v) - \delta_u(v) \big), \label{why-qstdaoshu-2-000}
\end{equation}
where $0\le s< t\le T$ are fixed. 

Next we will apply these facts to prove the inequalities in Proposition~\ref{qq1a}. The proposition will be used to validate the conditions in the fourth moment theorem when we prove 
Thoerem~\ref{com-ou parameter estimate} in the next section. The technical results about the asymptotic behaviour of some special integrals that are used in the proof are provided in Section~\ref{appendix}.\\

\textbf{Proof of Proposition}~\ref{qq1a}: Denote $\beta=2H-1$ and $\dif \vec{t}=\dif {t_1}\dif {t_2},\,\dif \vec{s}=\dif {s_1}\dif {s_2}$. 
It follows from equation \eqref{qstdaoshu}, the distributional derivatives of the function $q(t, s)$, that 
 \begin{align}
 \|\psi_T\|^2_{{\FH}^{\otimes2}}&  
=H^2\int_{[0,T]^4} \frac{\partial^2}{\partial t_1 \partial s_2} \left\{e^{-\bar{\gamma}(t_1-s_1)-\gamma(t_2-s_2)} q(t_1,s_1)q(t_2,s_2)\right\}\notag\\
    &\times \sgn(t_2-t_1)  \abs{t_2-t_1}^{\beta}  \sgn(s_1-s_2)  \abs{s_1-s_2}^{\beta} \dif\vec{t}   \dif \vec{s}, \label{chufadian000-000}
    \end{align} where $\gamma=\lambda-\mi \omega$ and the second-order partial derivative (understood as distributional derivative) is given by:
\begin{align*}
    &\frac{\partial^2}{\partial t_1 \partial s_2} \left\{e^{-\bar{\gamma}(t_1-s_1)-\gamma(t_2-s_2)} q(t_1,s_1)q(t_2,s_2)\right\} \\
    &=e^{-\bar{\gamma}(t_1-s_1)-\gamma(t_2-s_2)} \left[-\bar{\gamma} q(t_1,s_1)+ \mathbf{1}_{[0,T]}(s_1)\big( \delta_{s_1}(t_1) -\delta_T(t_1)\big)\right]\\
    &\times \left[{\gamma} q(t_2,s_2)+ \mathbf{1}_{[0,T]}(t_2)\big( \delta_{0}(s_2) -\delta_{t_2}(s_2)\big) \right].
\end{align*}
Expanding the equation \eqref{chufadian000-000}, we have 
\begin{align}    \|\psi_T\|^2_{{\FH}^{\otimes2}}=H^2\times [\mathrm{A}_1(T)+\mathrm{A}_{2}(T)+\mathrm{A}_3(T)],\label{qishift2-fenjie-000}
\end{align} where
\begin{align*}
    \mathrm{A}_1(T)
    &=-\abs{\gamma}^2\int_{[0,T]^4}  e^{-\bar{\gamma}(t_1-s_1)-\gamma(t_2-s_2)}q(t_1,s_1)q(t_2,s_2)\\
    &\times \sgn(t_2-t_1)  \abs{t_2-t_1}^{\beta}  \sgn(s_1-s_2)  \abs{s_1-s_2}^{\beta } \dif \vec{t}  \dif \vec{s},\\  
        \mathrm{A}_{2}(T)
    &=\int_{[0,T]^4}  e^{-\bar{\gamma}(t_1-s_1)-\gamma(t_2-s_2)}\sgn(t_2-t_1)  \abs{t_2-t_1}^{\beta}  \sgn(s_1-s_2)  \abs{s_1-s_2}^{\beta }\notag\\
    &\times \Big[-\bar{\gamma}q(t_1,s_1) \big( \delta_{0}(s_2) -\delta_{t_2}(s_2)\big)+\gamma q(t_2,s_2)\big( \delta_{s_1}(t_1)\big) -\delta_T(t_1)\big)\Big] \dif \vec{t}  \dif \vec{s},\\
   \mathrm{A}_3(T)&= \int_{[0,T]^4}  e^{-\bar{\gamma}(t_1-s_1)-\gamma(t_2-s_2)}\sgn(t_2-t_1)  \abs{t_2-t_1}^{\beta}  \sgn(s_1-s_2)  \abs{s_1-s_2}^{\beta }\notag\\
    &\times  \big( \delta_{s_1}(t_1) -\delta_T(t_1)\big) \big( \delta_{0}(s_2) -\delta_{t_2}(s_2) \big) \dif \vec{t}  \dif \vec{s}. 
\end{align*}

 The term $\mathrm{A}_1(T)$ can be decomposed into conjugate integrals over two sub-domains $t_2\le t_1$ and $t_1\le t_2$, 
\begin{align}
    \mathrm{A}_1(T)&=2{\abs{\gamma}^2} \RE\Big[ \int_{[0,T]^4, t_2\le t_1}  e^{-\bar{\gamma}(t_1-s_1)-\gamma(t_2-s_2)}q(t_1,s_1)q(t_2,s_2)\notag \\
    &\times (t_1-t_2)^{\beta}  \sgn(s_1-s_2)  \abs{s_1-s_2}^{\beta } \dif \vec{t}  \dif \vec{s} \Big].\notag\\
\end{align}
By Lemma~\ref{last key 00}, for $H\in (\frac14,\frac12)$, we have
\begin{equation}
   \mathrm{A}_1(T) = \frac{T^{4H}}{2H(4H-1) }-\frac{T\abs\gamma^2}{\lambda}\left[ \Gamma^2(2H) - 2\kappa \right] \RE\big(\frac{1}{\gamma^{4H}}\big) -\frac{2\lambda}{(4H-1)\abs{\gamma}^2}T^{4H-1} +O(1),\label{fenjie 0012-000}
\end{equation}
where 
\begin{equation}
     \kappa=-\frac{\Gamma(2H)\Gamma(4H-1)\Gamma(3-4H)}{2\Gamma(2-2H)}.
     \label{kapp notation}
\end{equation}

Similarly, for the term $A_2$, we first expand the integrand according to the definition of Dirac delta function, and utilize the conjugate relationships to 
obtain
\begin{align}
    A_2 &=2 \RE \left[\gamma\int_{[0,T]^3} e^{-\gamma (t_2-s_2)}\abs{t_2-s_1}^{\beta}\abs{s_1-s_2}^{\beta}\sgn(t_2-s_1)\sgn(s_1-s_2)q(t_2,s_2)\dif \vec{s}\dif t_2 \right.\notag \\
    &+ \left.\gamma\int_{[0,T]^3}e^{-\gamma (t_2-s_2)-\bar{\gamma}s_1}\abs{t_2-s_1}^{\beta}\sgn(t_2-s_1) s_2^{\beta} q(t_2,s_2) \dif \vec{s}\dif t_2\right].  \label{fenjie 002-000-buch-00}
\end{align}
For the first integral of the term $A_2$, we split the region of integration into two sub-regions: $\{s_2 \leq s_1 \leq t_2\}$, $\{s_1 \leq s_2\} \cup \{t_2 \leq s_1\}$. Then we have the following integration results:
\begin{itemize}
 \item Over the first sub-region, Making change of variables $  x=s_1-s_2, y=t_2-s_2$ yields
\begin{align*}
    &\int_{[0,T]^3,\,s_2\le  s_1\le t_2}e^{-\gamma (t_2-s_2)}\abs{t_2-s_1}^{\beta}\abs{s_1-s_2}^{\beta}\sgn(t_2-s_1)\sgn(s_1-s_2)\dif \vec{s}\dif t_2\\
    &=\int_{0\le x\le y\le T}e^{-\gamma y} x^{\beta}(y-x)^{\beta}(T-y)\dif x\dif y=\Gamma^2(2H){\gamma}^{-4H}T +O(1);
\end{align*}
 \item Over the second sub-region, Making change of variables $  x=\abs{s_1-s_2}\wedge \abs{t_2-s_1},  y=\abs{s_1-s_2}\vee \abs{t_2-s_1}$ and $z=s_1\wedge s_2$ yields
\begin{align*}
    &\int_{[0,T]^3,\,s_1\le  s_2, \text{\,or,\,}  t_2\le s_1}e^{-\gamma (t_2-s_2)}\abs{t_2-s_1}^{\beta}\abs{s_1-s_2}^{\beta}\sgn(t_2-s_1)\sgn(s_1-s_2)q(t_2,s_2)\dif \vec{s}\dif t_2\\
    &=-2\int_{0\le x\le y\le T}e^{\gamma(x-y)} x^{\beta}y^{\beta}(T-y)\dif x\dif y.
\end{align*}
\end{itemize}
For the second integral of the term $A_2$, we split the region of integration into two sub-regions: $\{s_1 \leq s_2\}$ and $\{s_1 > s_2\}$. Then we have:
\begin{itemize}
 \item Over the first sub-region, making change of variables $z=s_2-s_1, x=s_2,y=t_2-s_1$ and using the symmetry yield 
\begin{align*}
   & \RE\big[\gamma\int_{[0,T]^3,\,s_1\le s_2}e^{-\gamma (t_2-s_2)-\bar{\gamma}s_1}\abs{t_2-s_1}^{\beta}\sgn(t_2-s_1) s_2^{\beta} q(t_2,s_2) \dif \vec{s}\dif t_2\Big]\\
    &= \RE\big[\gamma \int_{[0,T]^2}e^{-\gamma y -\bar{\gamma} x}x^{\beta}y^{\beta}\dif x\dif y \int_{0\vee (x+y-T)}^{x\wedge y} e^{2\lambda z }\dif z\big]\\
    &= \RE\big[\frac{\gamma}{2\lambda} \int_{[0,T]^2}e^{-\gamma y-\bar{\gamma} x+2\lambda (x\wedge y )} x^{\beta}y^{\beta} \dif x\dif y+O(1)\big],\quad \text{(by Lemma~\ref{upper bound F})} \\
&=\frac12 \int_{[0,T]^2}e^{-\gamma y-\bar{\gamma} x+2\lambda (x\wedge y )} x^{\beta}y^{\beta} \dif x\dif y+O(1)\\
  &=\RE \int_{0\le x\le y\le T}e^{\gamma (x-y)} x^{\beta}y^{\beta} \dif x\dif y+O(1),
\end{align*}
where the double integral in the third step is actually real-valued.
 \item Over the second sub-region, making change of variables $  x=s_2,y=\abs{t_2-s_1}, z=(t_2\wedge s_1)-s_2$ yields
\begin{align*}
    \int_{[0,T]^3,\,s_1> s_2}e^{-\gamma (t_2-s_2)-\bar{\gamma}s_1}\abs{t_2-s_1}^{\beta}\sgn(t_2-s_1) s_2^{\beta} q(t_2,s_2) \dif \vec{s}\dif t_2=O(1).
\end{align*}
\end{itemize}

We add the four integration results from above to obtain
  \begin{align*}
    A_2&={2 }\RE\Bigg[\gamma\Big(\Gamma^2(2H){\gamma}^{-4H}T +O(1)-2 \int_{0\le x\le y\le T}e^{\gamma(x-y)} x^{\beta}y^{\beta}(T-y)\dif x\dif y\Big)\\
    &+\int_{0\le x\le y\le T}e^{\gamma (x-y)} x^{\beta}y^{\beta} \dif x\dif y  +O(1)\Bigg]\\
    &={2 }\RE\Bigg[ \int_{0 \leq x \leq y \leq T} e^{\gamma(x-y)}x^{\beta}y^{\beta}\Big[ {1}-2\gamma(T-y) \Big]\dif x\dif y+\Gamma^2(2H){\gamma}^{1-4H}T\Bigg]+O(1).\notag
  \end{align*}

Next we can apply  equation \eqref{zhjjg 0000} to obtain for $H\in (\frac14,\frac12)$,
\begin{align}
   A_2 &={2 }\RE\Bigg[\Gamma^2(2H){\gamma}^{1-4H}T+\frac{T^{4H}}{2H}-\frac{2T^{4H-1}}{\gamma}\notag\\
    &+(1+4H-2\gamma T)\int_{0\le x\le y\le T} e^{\gamma(x-y)}x^{\beta}y^{\beta} \dif x\dif y\Bigg]+O(1). \notag
\end{align}
Applying Lemma~~\ref{asymptotic expansion key} for $\delta=1+2\beta=4H-1 \in (0, 1)$ yields
   \begin{align}
    A_2 & = {2}\RE\Bigg[\Gamma^2(2H){\gamma}^{1-4H}T+\frac{T^{4H}}{2H}-\frac{2T^{4H-1}}{\gamma} \notag\\
    & (1+4H-2\gamma T) \left(\frac{1}{4H-1}\frac{T^{4H-1}}{\gamma}+ \frac{\kappa}{\gamma^{4H}}-\frac{T^{4H-2}}{2\gamma^2}+O(T^{4H-3})\right)\Bigg]+O(1),\notag
   \end{align}
   where $\kappa$ is given by \eqref{kapp notation}. We simplify the above expression to obtain
   \begin{align}
    A_2 &=\left(\Gamma^2(2H)-2\kappa\right)\left(\gamma^{1-4H}+ \bar{\gamma}^{1-4H}\right)T \notag\\
    &-\frac{1}{H(4H-1)}T^{4H}+ \frac{2}{4H-1}\left(\frac{1}{\gamma}+\frac{1}{\bar{\gamma}}\right)T^{4H-1}+O(1). \label{fenjie 0023-000}
  \end{align}

For the last term $A_3$, by the definition of Dirac delta function, we simplify
\begin{align}  
    A_3 &=\int_{[0,T]^2} e^{-\bar{\gamma}(T-s_1)-\gamma t_2} (T-t_2)^{\beta} s_1^{\beta}\dif t_2\dif s_1  +\int_{[0,T]^2} \abs{t_2-s_1}^{2\beta} \dif t_2\dif s_1 \notag \\
    &+ \int_{[0,T]^2} e^{-\gamma t_2} \abs{t_2-s_1}^{\beta}\sgn(t_2-s_1) s_1^{\beta}\dif t_2\dif s_1 \notag \\
    &-\int_{[0,T]^2} e^{-\bar{\gamma}(T-s_1) } (T-t_2)^{\beta} \abs{t_2-s_1}^{\beta}\sgn(s_1-t_2)\dif t_2\dif s_1.  \label{fenjie 003-000-buch}
\end{align}
For the first integral, Lemma~\ref{upper bound F} implies that 
\begin{align*}
    \int_{[0,T]^2} e^{-\bar{\gamma}(T-s_1)-\gamma t_2} (T-t_2)^{\beta} s_1^{\beta}\dif t_2\dif s_1 =O(1).
\end{align*}
For the second integral, the symmetry implies that 
\begin{align*}
    \int_{[0,T]^2} \abs{t_2-s_1}^{2\beta} \dif t_2\dif s_1 
    = \left\{
      \begin{array}{ll}
 +\infty, & \quad  H \in (0, \frac14],\\
\frac{T^{4H}}{2H(4H-1)} ,& \quad H \in (\frac14,\frac12).
 \end{array}
\right.   
\end{align*}
For the fourth integral, making the change of variables $u=T-s_1,\, v=T-t_2$, we have
\begin{align*}
\int_{[0,T]^2} e^{-\bar{\gamma}(T-s_1) } (T-t_2)^{\beta} \abs{t_2-s_1}^{\beta}\sgn(s_1-t_2)\dif t_2\dif s_1 = \int_{[0,T]^2} e^{-\bar{\gamma} u}v^{\beta} \abs{v-u}^{\beta}\sgn(v-u) \dif u\dif v.
\end{align*}
The third integral is conjugate to the fourth integral. Hence, substituting the above equations into \eqref{fenjie 003-000-buch} 
we have that when  $H\in (\frac14, \frac12)$,
\begin{align}
    A_3  &=O(1)+\frac{T^{4H}}{2H(4H-1)}-2\RE \int_{[0,T]^2} e^{- {\gamma} u}v^{\beta} \abs{v-u}^{\beta}\sgn(v-u) \dif u\dif v\notag \\
    &=\frac{T^{4H}}{2H(4H-1)}-2\RE\int_{0\le x\le y\le T}e^{{\gamma} (x-y)}x^{\beta}y^{\beta} \dif x\dif y +O(1)\notag\\
    &=\frac{T^{4H}}{2H(4H-1)}- \left(\frac{1}{\gamma}+\frac{1}{\bar{\gamma}}\right)\frac{T^{4H-1}}{4H-1} +O(1),\label{fenjie 003-000}
\end{align}where in the second line we have made the change of variables $x=\abs{v-u},\, y= u\vee v$, and in the last line, we have used Lemma~~\ref{asymptotic expansion key}. 

Substituting these integration results \eqref{fenjie 0012-000}, \eqref{fenjie 0023-000}, \eqref{fenjie 003-000} into equation \eqref{qishift2-fenjie-000},  we obtain the inequality 
\eqref{diyigedingli-complex}. 

As the last part of this section, we sketch the proof of the inequality 
\eqref{diyigedingli-complex-002} 
briefly. It follows from equation \eqref{qstdaoshu} that 
 \begin{align}
 \innp{\psi_T, h_T}_{{\FH}^{\otimes2}}&  
=H^2\int_{[0,T]^4} \frac{\partial^2}{\partial t_1 \partial s_2} \left\{e^{-\bar{\gamma}(t_1-s_1)-\bar{\gamma}(s_2-t_2)} q(t_1,s_1)q(s_2,t_2)\right\}\notag\\
    &\times \sgn(t_2-t_1)  \abs{t_2-t_1}^{\beta}  \sgn(s_1-s_2)  \abs{s_1-s_2}^{\beta} \dif\vec{t}   \dif \vec{s}, \label{chufadian000-000-001}
    \end{align}where the second-order partial derivative  (understood as distributional derivative) is given by:
\begin{align*}
    &\frac{\partial^2}{\partial t_1 \partial s_2} \left\{e^{-\bar{\gamma}(t_1-s_1)-\bar{\gamma}(s_2-t_2)} q(t_1,s_1)q(s_2,t_2)\right\} \\
    &=e^{-\bar{\gamma}(t_1-s_1)-\bar{\gamma}(s_2-t_2)} \left[-\bar{\gamma} q(t_1,s_1)+ \mathbf{1}_{[0,T]}(s_1)\big( \delta_{s_1}(t_1) -\delta_T(t_1)\big)\right]\\
    &\times \left[-\bar{\gamma} q(s_2,t_2)+ \mathbf{1}_{[0,T]}(t_2)\big( \delta_{t_2}(s_2) -\delta_{T}(s_2)\big) \right].
\end{align*}
Expanding equation \eqref{chufadian000-000-001}, we have 
\begin{align}     \innp{\psi_T, h_T}_{{\FH}^{\otimes2}}=H^2\times [\mathrm{K}_1(T)+\mathrm{K}_{2}(T)+\mathrm{K}_3(T)],\label{qishift2-fenjie-000-001}
\end{align} where  
\begin{align*}
    \mathrm{K}_1(T)
    &=\bar{\gamma}^2\int_{[0,T]^4}  e^{-\bar{\gamma}(t_1-s_1)-\bar{\gamma}(s_2-t_2)}q(t_1,s_1)q(s_2,t_2)\notag\\
    & \quad \times \sgn(t_2-t_1)  \abs{t_2-t_1}^{\beta}  \sgn(s_1-s_2)  \abs{s_1-s_2}^{\beta } \dif \vec{t}  \dif \vec{s}, \notag \\
    \mathrm{K}_{2}(T)
    &=-\bar{\gamma}\int_{[0,T]^4}  e^{-\bar{\gamma}(t_1-s_1)-\bar{\gamma}(s_2-t_2)}\sgn(t_2-t_1)  \abs{t_2-t_1}^{\beta}  \sgn(s_1-s_2)  \abs{s_1-s_2}^{\beta }\notag\\
    & \quad \times \Big[q(t_1,s_1) \big( \delta_{t_2}(s_2) -\delta_{T}(s_2)\big)+  q(s_2,t_2)\big( \delta_{s_1}(t_1)\big) -\delta_T(t_1)\big)\Big] \dif \vec{t}  \dif \vec{s}, \notag\\
    \mathrm{K}_3(T)
    &= \int_{[0,T]^4}  e^{-\bar{\gamma}(t_1-s_1)-\bar{\gamma}(s_2-t_2)} \sgn(t_2-t_1) \abs{t_2-t_1}^{\beta}  \sgn(s_1-s_2)  \abs{s_1-s_2}^{\beta }\notag\\
    & \quad \times \big( \delta_{s_1}(t_1) -\delta_T(t_1)\big)\big( \delta_{t_2}(s_2) -\delta_{T}(s_2)\big) \dif \vec{t}  \dif \vec{s}. \notag
\end{align*}
For the term $\mathrm{K}_1(T)$, we first use symmetry to write the term as
\begin{align*}
    & \mathrm{K}_1(T) =
    2\bar{\gamma}^2\int_{[0,T]^4, t_1\le s_2}  e^{-\bar{\gamma}(t_1-s_1)-\bar{\gamma}(s_2-t_2)}q(t_1,s_1)q(s_2,t_2)\notag\\
    & \quad \times \sgn(t_2-t_1)  \abs{t_2-t_1}^{\beta}  \sgn(s_1-s_2)  \abs{s_1-s_2}^{\beta } \dif \vec{t}  \dif \vec{s}.\end{align*}
By Lemma~\ref{last key 00}, for $H\in (\frac14,\frac12)$, we have
\begin{align}
  \mathrm{K}_1(T) &=\frac{4H(\Gamma^2(2H)-2\kappa)}{\bar{\gamma}^{4H-1}}T-\frac{1}{2H(4H-1)}T^{4H} + \frac{2}{(4H-1)\bar{\gamma}}T^{4H-1} +O(1). \label{fenjie 0012-000-001}
\end{align}

For the term $K_2(T)$, using symmetry we express it as
\begin{align}
    \mathrm{K}_2(T)
    &=-2\bar{\gamma}\int_{[0,T]^4}  e^{-\bar{\gamma}(t_1-s_1)-\bar{\gamma}(s_2-t_2)}\sgn(t_2-t_1)  \abs{t_2-t_1}^{\beta}  \sgn(s_1-s_2)  \abs{s_1-s_2}^{\beta }\notag\\
    & \quad \times q(t_1,s_1) \big( \delta_{t_2}(s_2) -\delta_{T}(s_2)\big)  \dif \vec{t}  \dif \vec{s} \notag \\
    & = -2\bar{\gamma}\int_{[0,T]^3}  e^{-\bar{\gamma}(t_1-s_1)}\sgn(t_2-t_1)  \abs{t_2-t_1}^{\beta}  \sgn(s_1-t_2)  \abs{s_1-t_2}^{\beta}
    q(t_1, s_1) \dif \vec{t}  \dif s_1 \notag \\
    & - 2\bar{\gamma}\int_{[0,T]^3}  e^{-\bar{\gamma}(t_1-s_1)-\bar{\gamma}(T-t_2)}\sgn(t_2-t_1)  \abs{t_2-t_1}^{\beta} (T - s_1)^{\beta} q(t_1, s_1) \dif \vec{t}  \dif s_1.
    \label{2jifen fenj}
\end{align}
By the change of variables $x=\abs{t_1-t_2}\wedge \abs{s_1-t_2}, y=\abs{t_1-t_2}\vee \abs{s_1-t_2}$
, we have 
\begin{align}
& \int_{[0,T]^3}  e^{-\bar{\gamma}(t_1-s_1)}\sgn(t_2-t_1)  \abs{t_2-t_1}^{\beta}  \sgn(s_1-t_2)  \abs{s_1-t_2}^{\beta}
    q(t_1, s_1) \dif \vec{t}  \dif s_1 \notag\\
    &=\int_{0\le x+y\le T} e^{-\bar{\gamma}(x+y)}x^{\beta}y^{\beta}(T-x-y)\dif x\dif y-2\int_{0\le x\le y\le T}e^{\bar{\gamma}(x-y)}x^{\beta}y^{\beta}(T-y)\dif x\dif y.\label{k21 t js}
\end{align}
By the change of variables $x=\abs{t_1-t_2}, y=T-s_1, u=(t_1\vee t_2)-s_1$
, we have
\begin{align}
 & \int_{[0,T]^3}  e^{-\bar{\gamma}(t_1-s_1)-\bar{\gamma}(T-t_2)}\sgn(t_2-t_1)  \abs{t_2-t_1}^{\beta} (T - s_1)^{\beta} q(t_1, s_1) \dif \vec{t}  \dif s_1\notag\\
  &=-\int_{[0,T]^2}e^{-\bar{\gamma}(x+y)}x^{\beta}y^{\beta} (y-x)\dif x\dif y +\int_{[0,T]^2}e^{-\bar{\gamma}(x-y)}x^{\beta}y^{\beta} (y-x)\dif x\dif y\notag\\
  &-\int_{0\le x\le y\le T}e^{-\bar{\gamma}(x+y)}x^{\beta}y^{\beta}\big[x\wedge y -(x+y-T)\vee 0\big]\dif x\dif y
.  \label{k22 t js}
\end{align}
Plugging the integration results \eqref{k21 t js} and \eqref{k22 t js} into equation \eqref{2jifen fenj} yields
\begin{align*}
\mathrm{K}_2(T)
    &= {2 }\bar{\gamma} \left[\int_{0\le x\le y\le T} e^{\bar\gamma(x-y)}x^{\beta}y^{\beta}\big( 2T-3y+x \big)\dif x\dif y -T\int_{[0, T]^2} e^{-\bar{\gamma}(x+y)}x^{\beta}y^{\beta} dxdy \right]+O(1).
\end{align*} Applying Lemma~\ref{asymptotic expansion key} and Lemma~\ref{coro key point000} yields when $H\in (\frac{1}{4}, \frac12)$
\begin{align}
    \mathrm{K}_2(T)
    &= {2 }\bar{\gamma} \left[ 2T(\frac{T^{4H-1}}{(4H-1)\bar{\gamma}} +\frac{\kappa}{\bar{\gamma}^{4H}}-\frac{T^{4H-2}}{2\bar{\gamma}^2}) -\frac{T^{4H}}{2H\bar{\gamma}}+\frac{(4H-3)T^{4H-1}}{(4H-1)\bar{\gamma}^2} -\frac{\Gamma^2(2H)}{\bar{\gamma}^{4H}}T\right] +O(1)\notag\\
    &=\frac{1}{(4H-1)H}T^{4H}+\frac{2(2\kappa-\Gamma^2(2H))}{\bar{\gamma}^{4H-1}}T - \frac{4}{(4H-1)\bar{\gamma}}T^{4H-1}+O(1).\label{fenjie 0023-000-001}
\end{align}
For the term $K_3(T)$, by the definition of Dirac delta function and the symmetry, we simplify it as
\begin{align*}
  \mathrm{K}_3(T)&=  2\int_{[0,T]^2}e^{\bar{\gamma}(s_1-T)}\sgn(s_1-t_2)\abs{s_1-t_2}^{\beta}(T-t_2)^{\beta}\dif s_1\dif t_2-\int_{[0,T]^2} \abs{s_1-t_2}^{2\beta}\dif s_1\dif t_2\\
  &+\int_{[0,T]^2}e^{\bar{\gamma}(s_1-T+t_2-T)} (T-s_1)^{\beta}(T-t_2)^{\beta}\dif s_1\dif t_2.
\end{align*}It is clear the third integral is $O(1)$. For the first integral, 
by the change of variables $x=\abs{t_2-s_1},\, y=T-t_2 $, we obtain
\begin{align*}
& \int_{[0,T]^2}e^{\bar{\gamma}(s_1-T)}\sgn(s_1-t_2)\abs{s_1-t_2}^{\beta}(T-t_2)^{\beta}\dif s_1\dif t_2\\
&=\int_{0\le x\le y\le T} e^{\bar{\gamma}(x-y)}x^{\beta}y^{\beta}\dif x\dif y-\int_{0\le x+y\le T} e^{-\bar{\gamma}(x+y)}x^{\beta}y^{\beta}\dif x\dif y \\
&=\int_{0\le x\le y\le T} e^{\bar{\gamma}(x-y)}x^{\beta}y^{\beta}\dif x\dif y+O(1).
\end{align*} 
Therefore, we use the symmetry and Lemma~\ref{asymptotic expansion key} to obtain
\begin{align}
    \mathrm{K}_3(T)
    &=  2\left[ \int_{0\le x\le y\le T}e^{\bar{\gamma} (x-y)}x^{\beta}y^{\beta} \dif x\dif y - \int_{0\le t_2\le s_1\le T} (s_1-t_2)^{2\beta} \dif t_2\dif s_1\right]+O(1)\label{shuom}\\
     &=-\frac{1}{2H(4H-1)}T^{4H}+  \frac{2}{(4H-1)\bar{\gamma}} {T^{4H-1}} +O(1),\quad \text{when $H\in \left(\frac14, \frac12\right)$.} \label{fenjie 003-000-001}
\end{align}
The second integral in equation \eqref{shuom} is finite when $H\in (\frac14, \frac12)$ and is infinite when $H\in (0,\frac14]$. Plugging the above results \eqref{fenjie 0012-000-001}, \eqref{fenjie 0023-000-001} and \eqref{fenjie 003-000-001} into equation \eqref{qishift2-fenjie-000-001},  we can obtain 
the inequality \eqref{diyigedingli-complex-002}.
This concludes the proof of Proposition~\ref{qq1a}.

\section{The statistical inference of the two-dimensional fractional Ornstein-Ulenbeck process with Hurst parameter $H\in (0,\frac12)$}\label{sec.4}
In this section, we will prove our main result Theorem~\ref{com-ou parameter estimate}. 
At first, we will derive some auxiliary results regarding the asymptotic behaviours of some specific functions. 
The constant $\gamma=\lambda-\mi \omega,\,\lambda>0,\omega\neq 0$ is recalled as it would be used extensively in this section.

\begin{lemma}\label{lm4-1}
For the complex-valued function 
\begin{equation}\label{gu}
    g(u):=e^{-\gamma u}\1_{[0,T]}(u),
\end{equation}
we have 
\begin{align}
 \lim_{T\to \infty}  \norm{g}_{\FH}^2= H\Gamma(2H)\frac{1}{2\lambda}\big[\bar{\gamma}^{1-2H}+{\gamma}^{1-2H}\big].
\end{align}
\end{lemma}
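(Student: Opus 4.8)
The plan is to evaluate $\norm{g}_{\FH}^2$ directly with the bounded-variation inner product formula of Proposition~\ref{st1-thm-original}, taking care of the complexification. Since the Hermitian inner product on the complexified space $\FH$ is the bilinear extension of the real inner product composed with conjugation in the second slot, one has $\norm{g}_{\FH}^2=\innp{g,g}_{\FH}=\innp{g,\bar g}$, where the right-hand side is read as the bilinear form applied to $g$ and to $\bar g(u)=e^{-\bar\gamma u}\1_{[0,T]}(u)$. (This is consistent with the appearance of one $\bar\gamma$ and one $\gamma$ in the kernel of $\norm{\psi_T}^2$ in the proof of Proposition~\ref{qq1a}.) Applying \eqref{innp fg3-0} then gives
\begin{equation*}
  \norm{g}_{\FH}^2=H\int_{[0,T]^2} g(t)\,\abs{t-s}^{2H-1}\sgn(t-s)\,\dif t\,\nu_{\bar g}(\dif s).
\end{equation*}

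First I would insert the explicit expression \eqref{jieshi01} for $\nu_{\bar g}$ with $h(s)=e^{-\bar\gamma s}$, $a=0$, $b=T$, namely $\nu_{\bar g}(\dif s)=-\bar\gamma e^{-\bar\gamma s}\1_{[0,T]}(s)\,\dif s+e^{-\bar\gamma s}\big(\delta_0(s)-\delta_T(s)\big)\,\dif s$. This decomposes $\norm{g}_{\FH}^2$ into an absolutely continuous double integral and two boundary terms arising from the Dirac masses at $s=0$ and $s=T$.

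Next I would treat the three pieces asymptotically as $T\to\infty$. The boundary term at $s=T$ carries the prefactor $e^{-\bar\gamma T}$, and after the substitution $u=T-t$ it is controlled by $e^{-2\lambda T}\int_0^T e^{\gamma u}u^{2H-1}\,\dif u$, which tends to $0$ because $\lambda>0$. The boundary term at $s=0$ equals $H\int_0^T e^{-\gamma t}t^{2H-1}\,\dif t$ and converges to $H\Gamma(2H)\gamma^{-2H}$ via the Gamma integral $\int_0^\infty e^{-\gamma t}t^{2H-1}\,\dif t=\Gamma(2H)\gamma^{-2H}$. For the absolutely continuous term, which equals $-\bar\gamma H\int_{[0,T]^2}e^{-\gamma t-\bar\gamma s}\abs{t-s}^{2H-1}\sgn(t-s)\,\dif t\,\dif s$, I would split the domain into $\{t>s\}$ and $\{t<s\}$ (where $\sgn$ and $\abs{t-s}^{2H-1}$ simplify), substitute $r=\abs{t-s}$, and carry out the inner integral over the remaining variable, which produces a factor $\tfrac{1-e^{-2\lambda(T-r)}}{2\lambda}\to\tfrac{1}{2\lambda}$; this yields the limit $-\tfrac{H\Gamma(2H)\,\bar\gamma}{2\lambda}\big(\gamma^{-2H}-\bar\gamma^{-2H}\big)$.

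Finally, adding the three limits and using $\gamma+\bar\gamma=2\lambda$ to simplify $2\lambda\,\gamma^{-2H}-\bar\gamma\,\gamma^{-2H}=\gamma^{1-2H}$ produces exactly $\tfrac{H\Gamma(2H)}{2\lambda}\big(\gamma^{1-2H}+\bar\gamma^{1-2H}\big)$, as claimed. The only delicate points are keeping the Hermitian-versus-bilinear conjugation straight—so that $\nu_{\bar g}$ rather than $\nu_g$ appears—and handling the two boundary Dirac contributions correctly; the passage to the limit in each integral is then a routine dominated-convergence argument thanks to $\lambda>0$.
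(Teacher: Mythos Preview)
Your proof is correct and follows essentially the same route as the paper. The paper applies \eqref{innp fg3-00} with the distributional derivative $\overline{g'(v)}=e^{-\bar\gamma v}\big[-\bar\gamma\1_{[0,T]}(v)+\delta_0(v)-\delta_T(v)\big]$, which is exactly your $\nu_{\bar g}$, and then records the outcome of the ``direct calculation'' as $H\Gamma(2H)\big[\tfrac{\bar\gamma}{2\lambda}(\bar\gamma^{-2H}-\gamma^{-2H})+\gamma^{-2H}\big]$; your three-piece decomposition (boundary at $s=0$, boundary at $s=T$, absolutely continuous part) simply spells out that computation, and your final simplification via $2\lambda-\bar\gamma=\gamma$ matches theirs.
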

\begin{proof}
The inner product formula \eqref{innp fg3-00} implies that
\begin{align*}
 \lim_{T\to \infty}  \norm{g}_{\FH}^2&=H\lim_{T\to \infty} \iint_{[0,T]^2}  g(u)\overline{g'(v)}\abs{u-v}^{2H-1}\sgn(u-v)\dif u\dif v
\end{align*} 
where 
\begin{align*}
    g'(v)=g(v)\left[-\gamma \1_{[0,T]}(v)+\delta_0(v)-\delta_T(v)\right].
\end{align*}A direct calculation implies that 
\begin{align*}
 \lim_{T\to \infty}  \norm{g}_{\FH}^2&=H\Gamma(2H)\left[\frac{\bar{\gamma}}{2\lambda}(\bar{\gamma}^{-2H}-{\gamma}^{-2H})+ {\gamma}^{-2H}\right] \\
 &=H\Gamma(2H)\frac{1}{2\lambda}\left[\bar{\gamma}^{1-2H}+{\gamma}^{1-2H}\right].
\end{align*}
\end{proof}

Next, we will study the limiting behavior of the time-averaged value regarding the solution to equation \eqref{cp}.
\begin{lemma}\label{lem fenmuzixian}
Let $Z$ be the solution to equation \eqref{cp} with $Z_0=0$. As $T\to \infty$, we have that 
\begin{align}
    \frac{1}{T}\int_0^T \abs{Z_t}^2\dif t \to H\Gamma(2H)\frac{1}{2\lambda}\left[\bar{\gamma}^{1-2H}+{\gamma}^{1-2H}\right],\quad a.s.. \label{zt.lim}
\end{align}
\end{lemma}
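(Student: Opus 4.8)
The plan is to recognize the claimed constant as the second moment of the stationary version of $Z$ and to obtain the almost sure convergence from ergodicity. First I would record the solution in explicit form, $Z_t=\int_0^t e^{-\gamma(t-s)}\dif\zeta_s$, and, writing $g_t(s):=e^{-\gamma(t-s)}\1_{[0,t]}(s)$ together with $\zeta=(B^1+\mi B^2)/\sqrt2$, expand $\abs{Z_t}^2=Z_t\overline{Z_t}$. Using the independence of $B^1$ and $B^2$ and the bilinearly extended It\^o isometry \eqref{G extension defn}, the cross terms vanish and one is left with $\E[\abs{Z_t}^2]=\norm{g_t}_{\FH}^2$. Since the inner product formula \eqref{innp fg3-00} is invariant under the time reversal $u\mapsto t-u$ on $[0,t]$ (the factor $\sgn$ and the sign produced by differentiating $g_t(t-\cdot)$ cancel), $\norm{g_t}_{\FH}^2$ equals the quantity computed in Lemma~\ref{lm4-1} with $T=t$, so $\E[\abs{Z_t}^2]\to H\Gamma(2H)\frac{1}{2\lambda}[\bar\gamma^{1-2H}+\gamma^{1-2H}]=:c$ as $t\to\infty$. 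Taking Ces\`aro averages already gives $\E[\frac1T\int_0^T\abs{Z_t}^2\dif t]\to c$, which pins down the only possible limit.

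To upgrade mean convergence to almost sure convergence I would pass to the stationary solution. Extending $\zeta$ to a two-sided fractional noise, set $\tilde Z_t:=\int_{-\infty}^t e^{-\gamma(t-s)}\dif\zeta_s$; this is a centered, stationary complex Gaussian process whose covariance tends to $0$, hence, its spectral measure being atomless, it is ergodic. Birkhoff's continuous-time ergodic theorem then yields $\frac1T\int_0^T\abs{\tilde Z_t}^2\dif t\to\E[\abs{\tilde Z_0}^2]$ almost surely, and $\E[\abs{\tilde Z_0}^2]=\lim_{t\to\infty}\norm{g_t}_{\FH}^2=c$ by the computation of the first paragraph.

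It then remains to compare $Z$ with $\tilde Z$. The transient term is $\tilde Z_t-Z_t=e^{-\gamma t}\int_{-\infty}^0 e^{\gamma s}\dif\zeta_s=:e^{-\gamma t}\xi$ with $\xi$ a fixed finite random variable, so $\abs{\tilde Z_t-Z_t}=e^{-\lambda t}\abs{\xi}\to0$ almost surely because $\lambda>0$. Bounding $\big|\abs{Z_t}^2-\abs{\tilde Z_t}^2\big|\le\abs{Z_t-\tilde Z_t}\,(\abs{Z_t}+\abs{\tilde Z_t})$ and using the Garsia-Rodemich-Rumsey inequality to control the almost sure growth of $\sup_{0\le t\le T}\abs{Z_t}$ (and of $\abs{\tilde Z_t}$), the exponential factor dominates and $\frac1T\int_0^T\big(\abs{Z_t}^2-\abs{\tilde Z_t}^2\big)\dif t\to0$ almost surely; combined with the previous paragraph this yields \eqref{zt.lim}. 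The same uniform control of $\sup_{n\le t\le n+1}\abs{Z_t}$ is what allows one to pass from a discrete skeleton $T=n$ to a genuinely continuous-time statement when invoking the ergodic theorem.

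I expect the delicate point to be the almost sure (rather than merely $L^2$) statement: namely justifying the continuous-time ergodic theorem for the functional $\abs{\tilde Z_t}^2$ of the stationary process and, in tandem, using the Garsia-Rodemich-Rumsey estimate both to rule out fluctuations of the continuous-time average between integer times and to absorb the transient term. By contrast, the identification of the limiting constant $c$ is routine once Lemma~\ref{lm4-1} and the isometry reduce $\E[\abs{Z_t}^2]$ to $\norm{g_t}_{\FH}^2$.
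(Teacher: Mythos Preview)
Your proposal follows essentially the same route as the paper: pass to the stationary extension $\tilde Z_t$ (called $Y_t$ there), use ergodicity to obtain $\frac1T\int_0^T|\tilde Z_t|^2\dif t\to\E[|\tilde Z_0|^2]$, and absorb the transient $e^{-\gamma t}\tilde Z_0$. The only difference is that you invoke the Garsia--Rodemich--Rumsey inequality to control the transient, whereas the paper dispatches it with a one-line Cauchy--Schwarz bound $\int_0^T e^{-\lambda t}|\tilde Z_t|\,\dif t\le\big(\int_0^T e^{-2\lambda t}\,\dif t\big)^{1/2}\big(\int_0^T|\tilde Z_t|^2\,\dif t\big)^{1/2}$, which after dividing by $T$ is $O(T^{-1/2})$ times an already-convergent quantity---no path-regularity estimate is needed.
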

\begin{proof}
    Denote $Y_t=\int_{-\infty}^t e^{-\gamma (t-u)}\dif \zeta_u,\,t\in \Rnum$.  Clearly it is a centered Gaussian process. We claim that $Y$ is stationary and ergodic. By the stationarity of increments of fBm and the construction of stochastic integrals, we rewrite $Y_{t+s} = \int_{-s}^\infty e^{-\gamma(u+s)} d \zeta_{t-u}$ and $\bar{Y}_t = \int_0^\infty e^{-\bar\gamma v} d \zeta_{t-v}$, and apply It\^{o}'s isometry to obtain 
    \begin{align*}
        &\E[Y_{t+s}\bar{Y}_t]\\& = H e^{-\gamma s} \lim_{T\to\infty} \int_{-s}^{t+T}\dif u\int_{0}^{t+T} e^{-\gamma u} (e^{-\bar{\gamma}v}\1_{[0,T+t]}(v))' \abs{u-v}^{2H-1}\sgn(u-v)\dif v\\
        &=H  {e^{-\gamma s}}\big[\int_{-s}^{\infty}e^{-\gamma u}u^{2H-1}\sgn(u)\dif u -\bar{\gamma}\int_{-s}^{\infty}\dif u\int_{0}^{\infty} e^{-\gamma u-\bar{\gamma}v}   \abs{u-v}^{2H-1}\sgn(u-v)\dif v\big]\\
        &=\E[Y_{s}\bar{Y}_0].
    \end{align*}
    This implies that the process $Y$ is stationary. Since the function $g(s)=e^{\gamma s}$ and $g'(s)$ are never zero  and the ratio $\frac{\abs{g'}}{\abs{g}'}$ is defined and bounded in the neighbourhood of $\infty$,  by L'H\^{o}pital's rule for complex-valued functions (see \cite{Carter}), we have that as $s\to\infty$, $\E[Y_{s}\bar{Y}_0]\to 0$ when $H\in (0,\frac12)$. This implies the complex-valued Gaussian process $Y$ is ergodic.
    
    Next, we will prove \eqref{zt.lim}. It is clear that $
    Z_t=Y_t- e^{-\gamma t} Y_0$. By the ergodic property of the process $Y$ and the Cauchy-Schwarz inequality, we can compute
\begin{align*}
\lim_{T\to\infty}   \frac{1}{T}\int_0^T \abs{Z_t}^2\dif t &=\lim_{T\to\infty}   \frac{1}{T}\int_0^T \abs{Y_t}^2\dif t=\E[\abs{Y_0}^2]\\ &=H\Gamma(2H)\frac{1}{2\lambda}\big[\bar{\gamma}^{1-2H}+{\gamma}^{1-2H}\big],
\end{align*}where the last line can be obtained from It\^o's isometry and Lemma~\ref{lm4-1}.
\end{proof}

In the following proposition, we will study the limiting behavior of the complex (1, 1) Wiener-It\^o integral process $X_t$ given by \eqref{e.def-X}.

\begin{proposition}\label{prop fenzi as converg}
Let the process $X_T$ be defined by \eqref{e.def-X}. When $H\in (\frac14,\frac12)$, the Wiener chaos process $\set{\frac{X_T}{T},\,T>0}$ converges to zero almost surely as $T\to\infty$.
\end{proposition}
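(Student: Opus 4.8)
The plan is to deduce the almost sure convergence from the second-moment estimate \eqref{diyigedingli-complex} through three ingredients: hypercontractivity on a fixed Wiener chaos, the Borel--Cantelli lemma along the integers, and the Garsia--Rodemich--Rumsey (GRR) inequality to fill in the gaps between consecutive integers. First recall that, by the isometry property in Definition~\ref{chjfendy}, one has $\E[\abs{X_T}^2]=\norm{\psi_T}_{\FH^{\otimes 2}}^2$, so Proposition~\ref{qq1a} (namely \eqref{diyigedingli-complex}) supplies a constant $C$ with $\E[\abs{X_T}^2]\le C T$ for all large $T$. Since $X_T=I_{1,1}(\psi_T)$ lives in the fixed complex chaos $\mathcal{H}_{1,1}$ (total order $2$), hypercontractivity yields, for every $q\ge 2$, a constant $c_q$ with $\E[\abs{X_T}^q]\le c_q\,\E[\abs{X_T}^2]^{q/2}\le c_q C^{q/2}\,T^{q/2}$.

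Next I would run the discrete estimate. For an integer $T=n$ the above bound gives $\E[\abs{X_n/n}^q]\le c_q C^{q/2}\,n^{-q/2}$. Choosing any $q>2$ makes the series $\sum_n n^{-q/2}$ convergent, so by Chebyshev's inequality together with the Borel--Cantelli lemma we obtain $X_n/n\to 0$ almost surely along the integers.

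The main work is to control the oscillation of $T\mapsto X_T$ inside each interval $[n,n+1]$. For $s<t$ in a common interval the kernel $\psi_t-\psi_s$ is supported on the thin strip $\{0\le v<u,\ s<u\le t\}$, and using the inner product formula \eqref{innp fg3-00} together with the distributional-derivative computations already developed for Proposition~\ref{qq1a} in Section~\ref{sec.3}, one shows that there exist $\delta\in(0,1]$ and a constant $C$, uniform over $0\le s<t$ with $t-s\le 1$, such that
\begin{equation*}
\E[\abs{X_t-X_s}^2]=\norm{\psi_t-\psi_s}_{\FH^{\otimes2}}^2\le C\,\abs{t-s}^{\delta}.
\end{equation*}
Applying hypercontractivity once more gives $\E[\abs{X_t-X_s}^q]\le c_q C^{q/2}\abs{t-s}^{q\delta/2}$ for all $q\ge 2$. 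With the choices $\Psi(x)=x^{q}$ and $p(u)=u^{a}$ for a suitable exponent $a$, the GRR inequality bounds $\sup_{n\le s,t\le n+1}\abs{X_t-X_s}$ by an integral functional $B^{1/q}$ whose expectation $\E[B]$ is controlled by $\int\!\int\abs{x-y}^{q\delta/2-aq}\dif x\dif y$; taking $q$ large enough (so that $2/q<a<\delta/2+1/q$ admits a solution) makes $\E\big[\sup_{n\le T\le n+1}\abs{X_T-X_n}^{q}\big]\le C_q$ uniformly in $n$, since the integrand depends only on $\abs{x-y}$. A further Borel--Cantelli argument then yields $\tfrac1n\sup_{n\le T\le n+1}\abs{X_T-X_n}\to 0$ almost surely.

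Finally I would combine the two pieces: for $T\in[n,n+1]$,
\begin{equation*}
\Big|\frac{X_T}{T}\Big|\le \frac{\abs{X_T-X_n}}{n}+\frac{\abs{X_n}}{n}\le \frac1n\sup_{n\le S\le n+1}\abs{X_S-X_n}+\frac{\abs{X_n}}{n},
\end{equation*}
and both terms tend to $0$ almost surely by the preceding steps, whence $X_T/T\to 0$ almost surely. The principal obstacle is the uniform increment estimate $\norm{\psi_t-\psi_s}_{\FH^{\otimes2}}^2\le C\abs{t-s}^{\delta}$ with a usable exponent $\delta$: for $H\in(\tfrac14,\tfrac12)$ the kernels are rough, and the $\FH^{\otimes2}$-norm of the difference must be bounded carefully so that the resulting H\"older regularity is strong enough (i.e.\ $\delta$ large enough relative to the moment order $q$) for the GRR step to close.
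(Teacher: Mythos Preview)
Your proposal is correct and follows essentially the same route as the paper: hypercontractivity plus Borel--Cantelli along the integers, an increment bound $\E[\abs{X_t-X_s}^2]\le C\abs{t-s}^{\delta}$ for $\abs{t-s}\le 1$, and a GRR argument to control the oscillation on $[n,n+1]$. The paper makes the exponent explicit as $\delta=2H$ (via the expansion \eqref{chufadian000-000-001-zl}--\eqref{chufadian000-000-001-zl-daoshu}) and phrases the GRR output as $\abs{X_t-X_s}\le R_{p,q}\,n^{q/p}$ with a random constant $R_{p,q}$ independent of $n$, but this is a cosmetic variant of your formulation.
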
 
\begin{proof}
 The proof is similar to the case of $H\in[\frac12, \frac34)$ (see \cite{chw}). For the reader's convenience, we sketch the proof here.

When $H\in (\frac14,\frac12)$, it follows from the equation \eqref{sigmah0002},
Borel-Cantelli lemma and the
hypercontractivity of complex multiple Wiener-It\^o integrals that 
as $n\to\infty$ the sequence $\set{\frac{X_n}{n},\,n\ge 1}$ converges to zero almost surely.

Next, we can obtain the trajectory regularity of the increments of the process $\set{ {X_t} ,\,t\in [0,T]}$.
In fact, it follows from It\^o's isometry of complex multiple Wiener-It\^o integrals and  the identity \eqref{why-qstdaoshu-2-000} that 
 \begin{align}
\E[\abs{X_t-X_s} ^2]&  
=H^2\int_{[0,T]^4} \frac{\partial^2}{\partial u_1 \partial v_2} \left\{e^{-\bar{\gamma}(u_1-v_1)-{\gamma}(u_2-v_2)} p(u_1,v_1)p(u_2,v_2)\right\} \notag\\
    &\times \sgn(u_2-u_1)  \abs{u_2-u_1}^{\beta}  \sgn(v_1-v_2)  \abs{v_1-v_2}^{\beta} \dif\vec{u}   \dif \vec{v}, \label{chufadian000-000-001-zl}
    \end{align}where $\dif\vec{u}=\dif{u}_1\dif{u}_2,   \dif \vec{v} =\dif{v}_1\dif{v}_2$, and 
    the second-order partial derivative  (understood as distributional derivative)
    is given by:
\begin{align}
    &\frac{\partial^2}{\partial u_1 \partial v_2} \left\{e^{-\bar{\gamma}(u_1-v_1)-{\gamma}(u_2-v_2)} p(u_1,v_1)p(u_2,v_2)\right\} \notag\\
    &=e^{-\bar{\gamma}(u_1-v_1)-{\gamma}(u_2-v_2)} \left[{\gamma} p(u_2,v_2)+ \mathbf{1}_{(s,t)}(u_2)\big( \delta_{0}(v_2) -\delta_{u_2}(v_2)\big) \right] \notag\\
    &\times \left[-\bar{\gamma} p(u_1,v_1)+ \mathbf{1}_{(0,t)}(v_1)\big(\delta_{v_1\vee s}(u_1)-\delta_{t}(u_1) \big)\right].\label{chufadian000-000-001-zl-daoshu}
\end{align}
Using Lemma~\ref{upper bound F}, and after some similar calculations to the proof of Proposition~\ref{qq1a}, we can show that there exists a constant $C>0$ independent of $T$ such that for all $s,t\ge 0$ and $\abs{s-t}< 1$, 
\begin{align}\label{E norm 2} \E[\abs{X_t-X_s} ^2]\le C  \abs{t-s}^{2H}, \end{align} where we would like to point out that in the equations \eqref{chufadian000-000-001-zl}-\eqref{chufadian000-000-001-zl-daoshu}, there is a double integral which is finite only when $H\in (\frac14,\frac12)$ similar to that in both \eqref{fenjie 003-000} and \eqref{fenjie 003-000-001}.

Using the equation \eqref{E norm 2}, the hypercontractivity of multiple Wiener-It\^o integrals and the Garsia-Rodemich-Rumsey inequality, we can show that for any real number $p > \frac{2}{H}, q >1$ and integer $n \ge1$, 
\begin{align*}
\abs{X_t-X_s}\le R_{p,q} n^{q/p} , \qquad \forall \ t,s\in [n,n+1] ,
\end{align*}where $R_{p,q}$ is a random constant independent of $n$ (see \cite{chw}).

Finally, since
\begin{align*}
\abs{\frac{X_T}{T}}\le \frac{1}{T}\abs{X_T-X_n}+ \frac{n}{T}\frac{\abs{X_n}}{n},
\end{align*}where $n=[T]$ is the biggest integer less than or equal to a real number $T$, we have $\frac{X_T}{T}$ converges to $0$ almost surely as $T\to \infty$.
\end{proof}
The following proposition will play an important role when we validate the conditions of the fourth moment theorem to prove Thoerem~\ref{com-ou parameter estimate}.

\begin{proposition}\label{con equa 0}
Let $\psi_T,\,h_T$ be defined by \eqref{phist defn} and $H\in (\frac{1}{6},\,\frac12)$. As $T\to \infty$, we have
\begin{equation}
   \frac{1}{T}\psi_T{\otimes}_{0,1}h_T\to 0,\quad \frac{1}{T}\psi_T{\otimes}_{1,0}h_T\to 0,\quad \text{in }\,  \FH^{\otimes 2}.
\end{equation}
\end{proposition}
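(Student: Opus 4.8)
The plan is to recognize that these two contractions are exactly the quantities appearing in condition (iii) of the complex fourth moment theorem (Theorem~\ref{com FMT}) applied to $F_T=\frac{1}{\sqrt T}I_{1,1}(\psi_T)$. Since the kernel of $\bar F_T$ is $\frac{1}{\sqrt T}h_T$ and here $l=m+n=2$, the only admissible indices are those with $i+j=1$, so (iii) amounts precisely to $\frac1T\psi_T\otimes_{1,0}h_T\to 0$ and $\frac1T\psi_T\otimes_{0,1}h_T\to 0$ in $\FH^{\otimes2}$. Writing everything in terms of $\psi_T$ and $h_T$, it therefore suffices to show
\[
\|\psi_T\otimes_{i,j}h_T\|^2_{\FH^{\otimes2}}=o(T^2),\qquad (i,j)\in\{(1,0),(0,1)\}.
\]
Because $h_T(t,s)=\overline{\psi_T(s,t)}$, the two contractions are conjugate–transpose images of one another, so it is enough to carry out the estimate for one of them and read off the other by symmetry.

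First I would make the contraction explicit. Contracting a single $\FH$-slot amounts to taking the $\FH$-inner product in the shared variable, so, up to the natural conjugation,
\[
(\psi_T\otimes_{1,0}h_T)(t,s)=\big\langle \psi_T(\cdot,s),\,\overline{h_T(t,\cdot)}\big\rangle_{\FH},
\]
and applying the new inner product formula \eqref{innp fg3-00} turns this into a double integral carrying one singular factor $|u-v|^{2H-1}\sgn(u-v)$ together with the exponential weights inherited from $\psi_T$ and $h_T$. The functions involved are products of exponentials and indicators, hence of bounded variation, so their distributional derivatives generate Dirac masses exactly as in \eqref{qstdaoshu}. I would then compute $\|\psi_T\otimes_{1,0}h_T\|^2_{\FH^{\otimes2}}$ by applying \eqref{innp fg3-00} once more in each of the two remaining slots; after differentiating the bounded-variation kernels and collecting the $\delta$-contributions, this represents the squared norm as a finite sum of multiple integrals over $[0,T]^{d}$, each a product of up to three kernels of the type $|x-y|^{2H-1}$ weighted by factors $e^{-\gamma(\cdots)}$ or $e^{-\bar\gamma(\cdots)}$, entirely in the spirit of the decomposition \eqref{qishift2-fenjie-000}--\eqref{fenjie 003-000} used for Proposition~\ref{qq1a}.

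With that decomposition in hand, the estimate is driven by two competing effects. The exponential weights localize all the ``difference'' variables on an $O(1)$ scale and leave a single center-of-mass variable free over $[0,T]$; by the same changes of variables and the auxiliary bounds (Lemma~\ref{upper bound F} and Lemma~\ref{asymptotic expansion key}) used for Proposition~\ref{qq1a}, each surviving term is then at most of order $T$, which is $o(T^2)$ and yields the claimed convergence after dividing by $T$. The delicate point, and the source of the restriction $H>\frac16$, is the local integrability of the most singular configuration: three kernels $|x-y|^{2H-1}$ meeting at a common point produce, in the two free difference variables, an integrand behaving like $r^{3(2H-1)}\cdot r\,dr=r^{6H-2}\,dr$ near the origin, which is integrable precisely when $6H-2>-1$, i.e.\ $H>\frac16$.

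The main obstacle I anticipate is the bookkeeping in the second step: tracking which $\delta$-terms survive, and confirming that no term of order $T^{2}$ (or any power strictly between $T$ and $T^2$) is generated. This requires checking that the potentially dangerous pure-power contributions, analogous to the $T^{4H}$ and $T^{4H-1}$ terms that canceled in \eqref{fenjie 0012-000}--\eqref{fenjie 003-000}, again cancel or remain subcritical for $H\in(\frac16,\frac12)$, while simultaneously isolating the single triple-kernel integral whose convergence forces the threshold $H>\frac16$. Once these two points are settled, the bound $\|\psi_T\otimes_{i,j}h_T\|^2_{\FH^{\otimes2}}=O(T)=o(T^2)$ follows and the proposition is proved.
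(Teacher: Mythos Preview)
Your strategy is genuinely different from the paper's. The paper does \emph{not} use the time-domain inner product formula \eqref{innp fg3-00} at all for this proposition; instead it switches to the spectral side via the Pipiras--Taqqu representation $\langle f,g\rangle_{\FH}=c_H^{-2}\int_{\Rnum}\mathcal{F}f(\xi)\overline{\mathcal{F}g(\xi)}|\xi|^{1-2H}d\xi$. The contraction $\phi(s,t)=\frac1T\psi_T\otimes_{1,0}h_T$ is written as a Fourier integral, the transforms of $e^{-\bar\gamma(\cdot-t)}\mathbf{1}_{[t,T]}$ are computed explicitly, and $\|\phi\|^2_{\FH^{\otimes2}}$ is bounded using $|e^{ix}-1|\le C_\alpha x^{\alpha}$ and two technical lemmas (Lemmas~\ref{est.f} and~\ref{coro-zhou}), yielding $\|\phi\|^2_{\FH^{\otimes2}}\le C\,T^{-(2-4\alpha)}$ for suitable $\alpha$. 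The restriction $H>\tfrac16$ enters there through the requirement that the interval $(\tfrac14,\tfrac12\wedge\tfrac{3H}{2})$ for the auxiliary exponent $\alpha_1$ be nonempty.

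What you propose---expanding the contraction and its $\FH^{\otimes2}$-norm directly via \eqref{innp fg3-00}, tracking Dirac contributions, and reducing to the asymptotic lemmas of the Appendix---is a coherent alternative and your heuristic for the $H>\tfrac16$ threshold (local integrability of the triple singular kernel) is illuminating in a way the spectral proof is not. The trade-off is that the Fourier route packages all the exponential structure into the elementary bound $|e^{ix}-1|\le C_\alpha x^\alpha$ and gives an explicit polynomial decay rate in one stroke, whereas your route would demand the full $\delta$-term bookkeeping and cancellation analysis you flag as the ``main obstacle''; note also that the squared norm actually involves \emph{four} $\FH$-pairings (two from the two copies of the contraction, two from the outer $\FH^{\otimes2}$-norm), not three, so the combinatorics are heavier than you suggest. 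As written, your proposal is a credible plan but not yet a proof: the claimed $O(T)$ bound and the cancellations are asserted, not verified.
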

\begin{proof} It suffices to show $\phi(s,t):=\frac{1}{T}\psi_T{\otimes}_{1,0}h_T\to 0$, as the other one is similar. The proof proceeds along similar lines to the equation (3.17) in \cite{hnz 19}. In fact, the inner product formula of \cite{pip} implies that 
\begin{align}
   \phi(s,\,t)&=\frac{1}{T}\innp{\psi_T(\cdot, t),\,\psi_T(s,\,\cdot)}_{\FH}\nonumber\\
   &=\frac{1}{T c_H^2}\int_{\Rnum}\mathcal{F}(e^{- \bar{\gamma}(\cdot-t)}\mathbf{1}_{\{0\le t\le \cdot\le T\}} )(\xi)\overline{\mathcal{F}(e^{- \bar{\gamma}(s-\cdot)}\mathbf{1}_{\{0\le \cdot\le s\le T\}} )(\xi)}\abs{\xi}^{1-2H}\dif \xi.\label{contract phi}
\end{align}
where ${\mathcal {F}}(\cdot)$ is the Fourier transform in $L^2(\Rnum^d)$, and 
$c_H=\big(\frac{2\pi}{\Gamma(2H+1)\sin(\pi H)} \big)^{\frac12}$. Denote by $H(\cdot)=\mathbf{1}_{(0,\infty)}(\cdot)$ the Heaviside function. Note that for any $t\in (0,T)$,
\begin{align*}
  \mathcal{F}(e^{- \bar{\gamma}(\cdot-t)}\mathbf{1}_{\{0\le t\le \cdot\le T\}} )(\xi)&=\int_{\Rnum}e^{-\mi \xi u}e^{-\bar{\gamma}(u-t)} \mathbf{1}_{\{0\le t\le u\le T\}}\dif u\\
  &= \int_{\Rnum}e^{-\mi \xi u}\mathbf{1}_{[0,T]}(u) \left[e^{-\bar{\gamma}(u-t)} H(u-t)\right]\,\dif u \\
  &= \int_0^{T}e^{-\mi \xi u}\left[\frac{1}{2\pi}\int_{\Rnum} e^{\mi (u-t)\eta}\frac{1}{\bar{\gamma}+\mi \eta}\,\dif \eta \right] \,\dif u.
\end{align*}
By Fubini's theorem, we have
\begin{align*}
  \mathcal{F}(e^{- \bar{\gamma}(\cdot-t)}\mathbf{1}_{\{0\le t\le \cdot\le T\}} )(\xi)
  &=\frac{1}{2\pi}\int_{\Rnum} e^{-\mi t\eta}\frac{1}{\bar{\gamma}+\mi \eta}\,\dif \eta \int_0^{T}e^{\mi \eta u -\mi \xi u} \, \dif u\\
  &=\frac{1}{2\pi}\int_{\Rnum} e^{-\mi t\eta}\left[\frac{1}{\bar{\gamma}+\mi \eta}\frac{e^{\mi (\eta-\xi)T}-1}{\mi (\eta-\xi)}\right]\,\dif \eta.
\end{align*}
Substituting the above result into equation~\eqref{contract phi}, we obtain that
\begin{align*}
  \phi(s,t)&=\frac{1}{4 T c_H^2\pi^2}\int_{\Rnum}\left(\int_{\Rnum} \frac{ e^{-\mi t\eta}}{\bar{\gamma}+\mi \eta}\frac{e^{\mi (\eta-\xi)T}-1}{\mi (\eta-\xi)} \,\dif \eta \right) \left(\int_{\Rnum} \frac{ e^{\mi t\eta'}}{\bar{\gamma}-\mi \eta'}\frac{e^{-\mi (\eta'-\xi)T}-1}{-\mi (\eta'-\xi)} \,\dif \eta' \right)\abs{\xi}^{1-2H}\dif \xi.
\end{align*}That is to say, $\phi(s,t)$ is the  inversion Fourier transformation of 
\begin{align*}
   h(\eta,\,\eta')=\frac{1}{ T c_H^2 }\int_{\Rnum}\left( \frac{1}{\bar{\gamma}+\mi \eta}\frac{e^{\mi (\eta-\xi)T}-1}{\mi (\eta-\xi)}  \right) \left( \frac{1}{\bar{\gamma}+\mi \eta'}\frac{e^{\mi (\eta'+\xi)T}-1}{\mi (\eta'+\xi)} \right) \abs{\xi}^{1-2H}\dif \xi.
\end{align*}
Thus, it follows from the inner product formula of \cite{pip} that
\begin{align}
  \norm{\phi(s,t) }^2_{\FH^2}&=\frac{1}{c_H^2}\int_{\Rnum^2}\abs{h(\eta,\,\eta')}^2\abs{\eta}^{1-2H}\abs{\eta'}^{1-2H}\,\dif \eta\dif \eta'\notag\\
  &\le \frac{C}{T^2}\int_{\Rnum^2}\frac{\abs{\eta}^{1-2H}}{\abs{\gamma}^2+\eta^2}\frac{\abs{\eta'}^{1-2H}}{\abs{\gamma}^2+\eta'^2} \left(\int_{\Rnum}  \frac{\abs{e^{\mi (\eta-\xi)T}-1}}{\abs{\eta-\xi}} \frac{\abs{e^{\mi (\eta'+\xi)T}-1}}{\abs{\eta'+\xi}} \abs{\xi}^{1-2H}\dif \xi \right)^2\,\dif \eta\dif \eta'\notag\\
  &=\frac{C}{T^2}\int_{\Rnum_{+}^2}\frac{\eta^{1-2H}}{\abs{\gamma}^2+\eta^2}\frac{\eta'^{1-2H}}{\abs{\gamma}^2+\eta'^2} \left(\int_{\Rnum_{+}}  \frac{\abs{e^{\mi (\eta-\xi)T}-1}}{\abs{\eta-\xi}} \frac{\abs{e^{\mi (-\eta'+\xi)T}-1}}{\abs{\xi-\eta'}} \abs{\xi}^{1-2H}\dif \xi \right)^2\,\dif \eta\dif \eta',\label{zhouhjbdsh}
\end{align} where the last line is from the symmetry. Since for any fixed $\alpha\in (0, \frac12)$ there exists a positive constant $C_{\alpha}$ such that for any $x\ge 0$,
\begin{align*}
    \abs{e^{\mi x}-1}\le C_{\alpha} x^{\alpha},
\end{align*}
we can rewrite the inequality \eqref{zhouhjbdsh} as follows:
\begin{align*}
    \norm{\phi(s,t) }^2_{\FH^2}&\le \frac{C}{T^{2-4\alpha}} \int_{\mathbb{R}_+^2, \eta \geq \eta'} g(\eta,\eta')\Big(\int_{\Rnum_{+}}f_{\alpha}(\xi,\eta,\eta') d\xi \Big)^2\,\dif \eta\dif \eta',
\end{align*} where \begin{equation}\label{falpha}
     f_{\alpha}(\xi,\eta,\eta') = |\xi-\eta|^{-1+\alpha} |\xi-\eta'|^{-1+\alpha} \xi^{1-2H}\mathbf{1}_{\{\eta' \le \eta \}},
   \end{equation} 
   and 
   \begin{equation}\label{geta}
     g( \eta,\eta') =  \frac{\eta^{1-2H}}{\abs{\gamma}^2+\eta^2} \frac{\eta'^{1-2H}}{\abs{\gamma}^2+\eta'^2},
   \end{equation}
respectively.  It follows from Lemma~\ref{coro-zhou} by choosing appropriate $\alpha$ values that 
\begin{align*}
  \norm{\phi(s,t) }^2_{\FH^{2}}&\le \frac{C}{T^{2-4\alpha}}, 
\end{align*}
when $H\in (\frac{1}{6},\,\frac12)$. Hence, $\phi(s,t)=\frac{1}{T}\psi_T{\otimes}_{1,0}h_T\to 0$ as $T\to \infty$.
\end{proof}

\noindent{\textbf{Proof of Theorem~\ref{com-ou parameter estimate}:\,}}\\
Recall equation (\ref{ratio.pro.re})
\begin{equation}\label{hat gamm-gamm}
    \hat{\gamma}_T-\gamma=-\frac{\frac{1}{ {T}}X_T}{\frac{1}{T}\int_0^T \abs{Z_t}^2\dif t}.
\end{equation}
Lemma~\ref{lem fenmuzixian} implies that $\frac{1}{T}\int_0^T \abs{Z_t}^2\dif t$ converges to $ H\Gamma(2H)d$ almost sure as $T\to \infty$,  where $d$ is given in Theorem~\ref{com-ou parameter estimate}.
Proposition~\ref{prop fenzi as converg} implies that  $\frac{1}{ {T}} X_T$ converges to zero almost sure as $T\to \infty$. Hence, $\hat{\gamma}_T$  converges to $\gamma$ almost surely
as $T\rightarrow \infty$.
Denote $F_T=\frac{1}{\sqrt{T}}X_T$.
From Theorem~\ref{com FMT}, Proposition~\ref{qq1a} and Proposition~\ref{con equa 0}, we have \[
\hbox{ $F_T$ converges in law to $\varpi\sim \mathcal{N}(0, 
   (H\Gamma(2H))^2\tensor{C})$,}
   \]
    where $\tensor{C}$ is given in Theorem~\ref{com-ou parameter estimate}. 
From equation (\ref{hat gamm-gamm}), we obtain
\begin{equation*}
   \sqrt{T}(\hat{\gamma}_T-\gamma)=-\frac{F_T}{\frac{1}{T}\int_0^T \abs{Z_t}^2\dif t}.
\end{equation*}
Therefore, it follows from Lemma~\ref{lem fenmuzixian} and Slutsky's theorem that
$ \sqrt{T}(\hat{\gamma}_T-\gamma)$ converges in distribution to the bivariate Gaussian vector $\mathcal{N}(0,\frac{1}{d^2}\tensor{C})$.
{\hfill\large{$\Box$}}

\section{The existence of the $\alpha$-fractional Brownian bridge with Hurst parameter $H\in (0,\frac12)$}\label{sec.5}

As we have seen from the proof of 
Thoerem~\ref{com-ou parameter estimate}, 
the inner product formula \eqref{innp fg3-00} has been applied intensively to validate the conditions in the fourth moment theorem. 
To further demonstrate the usefulness of this inner product formula, 
we will show another two applications in this section of computing the second moments for $\alpha$-order fBm 
and the $\alpha$-fractional bridges when $H\in (0,\frac12)$. 
The results have been summarized in Thoerem~\ref{fbm-bridge exist} 
and Theorem~\ref{fbm bridge 2 exist}. Again, we will start from providing several technical results.

\begin{lemma}\label{decomp.xi}
Assume $\alpha,\,H\in (0,1)$. Let the Gaussian process $(\xi_t)_{t\in [0,T)}$ be given in \eqref{xit dingyi}. For all $0\le s\le t< T$, we can decompose 
\begin{align}
\frac{1}{H}\E\big[(\xi_s-\xi_t)^2\big] &=J_1(s,t)+J_2(s,t)+J_3(s,t), \label{zuihougj bds}
\end{align}
where 
\begin{eqnarray*}
    J_1(s,t) &:=& 2H \int_{s}^{ t}(T-u)^{-\alpha-1}\dif u \int_{u}^t (T-v)^{-\alpha}  (v-u)^{2H-1} \dif v, \\
    J_2(s,t) &:=& (T-t)^{1-\alpha}\int_{s}^{ t}(T-u)^{-\alpha-1} (t-u)^{2H-1}\dif u, \\
    J_3(s,t) &:=& (T-s)^{-\alpha}  \int_{s}^t(T-v)^{-\alpha}  (v-s)^{2H-1}  \dif v.
\end{eqnarray*}
\end{lemma}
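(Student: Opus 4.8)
The plan is to reduce the increment variance to a reproducing-kernel norm and feed it into the new inner-product formula \eqref{innp fg3-00}. Since $\xi_t-\xi_s=\int_s^t(T-u)^{-\alpha}\,\dif B^H_u$, the It\^o isometry \eqref{G extension defn} gives $\E[(\xi_s-\xi_t)^2]=\norm{g}_{\FH}^2$ with $g(u):=(T-u)^{-\alpha}\1_{[s,t]}(u)$. Because $t<T$, the factor $(T-u)^{-\alpha}$ stays bounded and monotone on $[s,t]$, so $g\in\mathcal{V}_{[0,T]}$ and Proposition~\ref{st1-thm-original} applies. (Although Proposition~\ref{st1-thm-original} is stated for $H\in(0,\frac12)$, the cancellation \eqref{lamma21a} that discards the $t^{2H-1}$ term is independent of $H$, so \eqref{innp fg3-0} is in fact valid for every $H\in(0,1)$, which is the range needed here.)

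First I would apply \eqref{innp fg3-0} with $f=g$, using the explicit form \eqref{jieshi01} of the measure $\nu_g$: its absolutely continuous part is $\alpha(T-v)^{-\alpha-1}\1_{[s,t]}(v)\,\dif v$ and its singular part is $(T-s)^{-\alpha}\delta_s-(T-t)^{-\alpha}\delta_t$. This splits $\frac1H\norm{g}_{\FH}^2$ into three contributions. The Dirac mass at $s$ produces, after using $\sgn(u-s)=1$ on $[s,t]$, exactly $J_3(s,t)$. The Dirac mass at $t$ produces (using $\sgn(u-t)=-1$) the term $I_c:=(T-t)^{-\alpha}\int_s^t(T-u)^{-\alpha}(t-u)^{2H-1}\,\dif u$, while the absolutely continuous part produces the double integral $I_a:=\alpha\int_{[s,t]^2}(T-u)^{-\alpha}(T-v)^{-\alpha-1}\abs{u-v}^{2H-1}\sgn(u-v)\,\dif u\,\dif v$. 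It then remains to show $I_a+I_c=J_1+J_2$.

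The heart of the argument is the transformation of $I_a$. Since $2H-1>-1$ and the $(T-\cdot)$ factors are bounded on $[s,t]^2$, the integrand is absolutely integrable; relabelling $u\leftrightarrow v$, averaging, and invoking the oddness of $w\mapsto\abs{w}^{2H-1}\sgn(w)$ collapses $I_a$ onto the triangle $\{s<u<v<t\}$ and yields $I_a=-\alpha\int_{s<u<v<t}(T-u)^{-\alpha-1}(T-v)^{-\alpha-1}(v-u)^{2H}\,\dif u\,\dif v$. The crucial gain is that this replaces the would-be singular kernel $\abs{u-v}^{2H-2}$ (not integrable for $H<\frac12$) by the harmless factor $(v-u)^{2H}$, which vanishes on the diagonal. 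Integrating by parts in $v$ via $\alpha(T-v)^{-\alpha-1}\,\dif v=\dif[(T-v)^{-\alpha}]$, the boundary contribution at $v=u$ drops out (because $(v-u)^{2H}\to0$), and the $v$-derivative of $(v-u)^{2H}$ supplies the factor $2H$; this gives precisely $J_1(s,t)$ together with a residual boundary term $-(T-t)^{-\alpha}\int_s^t(T-u)^{-\alpha-1}(t-u)^{2H}\,\dif u$.

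Finally I would combine this residual term with $I_c$: factoring $(T-u)^{-\alpha-1}$ and using the elementary identity $(T-u)^{-\alpha}-(t-u)(T-u)^{-\alpha-1}=(T-t)(T-u)^{-\alpha-1}$, the sum equals $(T-t)^{1-\alpha}\int_s^t(T-u)^{-\alpha-1}(t-u)^{2H-1}\,\dif u=J_2(s,t)$. Hence $\frac1H\norm{g}_{\FH}^2=J_1+J_2+J_3$, as claimed. I expect the main obstacle to be exactly the legitimacy of the integration by parts: one must pass through the symmetrized, integrable representation before differentiating, since a direct integration by parts on $I_a$ would generate the non-integrable kernel $\abs{u-v}^{2H-2}$. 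Verifying the vanishing of the diagonal boundary term and justifying Fubini for the symmetrization are the points that need the most care.
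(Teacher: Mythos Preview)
Your proposal is correct and follows essentially the same route as the paper: it applies the inner-product formula \eqref{innp fg3-00} to $g=(T-\cdot)^{-\alpha}\1_{[s,t]}$, splits into the same three pieces (the paper's $\alpha I_1,\,I_2,\,J_3$ are your $I_a,\,I_c,\,J_3$), performs the same symmetrization/relabelling to rewrite $I_a$ as $-\alpha\iint_{s<u<v<t}(T-u)^{-\alpha-1}(T-v)^{-\alpha-1}(v-u)^{2H}\,\dif u\,\dif v$, then integrates by parts in $v$ via $\alpha(T-v)^{-\alpha-1}\,\dif v=\dif[(T-v)^{-\alpha}]$ and uses the identity $(T-u)^{-\alpha}-(t-u)(T-u)^{-\alpha-1}=(T-t)(T-u)^{-\alpha-1}$ to recombine the boundary term with $I_c$ into $J_2$. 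The paper likewise assumes $H\in(0,\tfrac12)\cup(\tfrac12,1)$ and uses \eqref{innp fg3-00} for all such $H$, so your parenthetical remark about the range of validity is consistent with the paper's treatment.
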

\begin{proof}
For simplicity, we assume that $H\in (0,\frac12)\cup (\frac12,1)$.
For all $0\le s\le t< T$, denote a function $f(u)=(T-u)^{-\alpha} \1_{[s,t]}(u).$
The inner product formula \eqref{innp fg3-00} implies
\begin{align}
\E\big[(\xi_s-\xi_t)^2\big]&=\E\left[ \int_s^t (T-u)^{-\alpha} \dif B^H_u\right]^2\notag \\
&=H\iint_{[0,T]^2}  f'(u) f(v)  \abs{v-u}^{2H-1}\sgn(v-u) \dif u\dif v.\label{chushids}
\end{align} 
where the distributional derivative $  f'(\cdot) $ is as follows:
\begin{align*}
f'(u)= \alpha (T-u)^{-\alpha-1} \cdot \1_{[s,t]}(u) +  (T-u)^{-\alpha} \cdot \big(\delta_s(u)-\delta_t(u)\big).
\end{align*} 
Then,
\begin{eqnarray}
\frac{1}{H}\E\big[(\xi_s-\xi_t)^2\big]
& = & \alpha\iint_{[s,t]^2}(T-u)^{-\alpha-1}(T-v)^{-\alpha}  \abs{v-u}^{2H-1}\sgn(v-u) \dif u\dif v \notag\\
& & + \ (T-t)^{-\alpha}  \int_{s}^t(T-v)^{-\alpha}  (t-v)^{2H-1}  \dif v \notag\\
& & + \ (T-s)^{-\alpha}  \int_{s}^t(T-v)^{-\alpha}  (v-s)^{2H-1}  \dif v \notag\\
&=: & \alpha I_1(s,t)+I_2(s,t)+J_3(s,t).\label{zhongjianzhy}
\end{eqnarray}
It is evident that
\begin{eqnarray*}
I_1(s,t ) 
&  = & \iint_{s\le u< v\le t}(T-u)^{-\alpha-1}(T-v)^{-\alpha}  (v-u)^{2H-1}  \dif u\dif v\\
& & -\iint_{s\le v< u\le t}(T-u)^{-\alpha-1}(T-v)^{-\alpha}  (u-v)^{2H-1}  \dif u\dif v\\
& = & \iint_{s\le u< v\le t}(T-u)^{-\alpha-1}(T-v)^{-\alpha}  (v-u)^{2H-1}  \dif u\dif v\\
& & -\iint_{s\le u< v\le t}(T-v)^{-\alpha-1}(T-u)^{-\alpha}  (v-u)^{2H-1}  \dif u\dif v\\
& = & -\iint_{s\le u< v\le t}(T-u)^{-\alpha-1}(T-v)^{-\alpha-1}  (v-u)^{2H}  \dif u\dif v.
\end{eqnarray*}
By Fubini's theorem and integration by parts, we have
\begin{align*}
\alpha I_1(s,t) 
&=-\int_{s}^{ t}(T-u)^{-\alpha-1}\dif u \int_{u}^t  (v-u)^{2H} \dif (T-v)^{-\alpha} \notag\\
&=\int_{s}^{ t}(T-u)^{-\alpha-1}\dif u \left[-(v-u)^{2H}(T-v)^{-\alpha}|_{v=u}^{v=t} + 2H\int_{u}^t (T-v)^{-\alpha} (v-u)^{2H-1} \dif v \right]\notag\\
& = 2H \int_{s}^{ t}(T-u)^{-\alpha-1}\dif u \int_{u}^t (T-v)^{-\alpha}  (v-u)^{2H-1} \dif v\notag \\
& \quad -(T-t)^{-\alpha}\int_{s}^{ t}(T-u)^{-\alpha-1} (t-u)^{2H}\dif u.
\end{align*}

This implies that 
\begin{align}
\alpha I_1(s,t)+I_2(s,t)&=2H \int_{s}^{ t}(T-u)^{-\alpha-1}\dif u \int_{u}^t (T-v)^{-\alpha}  (v-u)^{2H-1} \dif v\notag \\
& \quad +(T-t)^{1-\alpha}\int_{s}^{ t}(T-u)^{-\alpha-1} (t-u)^{2H-1}\dif u\notag\\
&:=J_1(s,t)+J_2(s,t).\label{zhongjzhy 2}
\end{align}Substituting equation \eqref{zhongjzhy 2} into equation \eqref{zhongjianzhy}, we obtain the desired result \eqref{zuihougj bds}. 
\end{proof}

The above decomposition result will be applied in the following proposition to study the increment of the Gaussian process defined in equation \eqref{xit dingyi}.

\begin{proposition}\label{dianzeduliang}
Assume $H,\,\alpha\in (0,1)$ and the Gaussian process $(\xi_t)_{t\in[0,T)}$ is given by \eqref{xit dingyi}. For any fixed $t\in (0,T)$, there exists a positive constant $C$ depending on $t,\,T$ such that
\begin{align}\label{gima2fang jie}
\sigma^2(u,v):=\E\big[(\xi_u-\xi_v)^2\big]\le C_{t,T}\abs{u-v}^{2H },\quad 0\le u, v\le t,
\end{align}where $\sigma^2(u,v)$ and $\sigma(u,v)$ are called the structure function and canonical metric for the process $\xi$.
Furthermore, if $\alpha\in (0,H)$, there exists a positive constant $C$ independent of $T$ such that
\begin{align}\label{gima2fang jie}
\sigma^2(s,t)=\E\big[(\xi_s-\xi_t)^2\big]\le C\abs{s-t}^{2(H-\alpha)},\quad 0\le s, t< T.
\end{align}
\end{proposition}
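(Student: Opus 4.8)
The plan is to start from the decomposition in Lemma~\ref{decomp.xi}, namely $\frac1H\sigma^2(s,t)=J_1(s,t)+J_2(s,t)+J_3(s,t)$, and to estimate each $J_i$ separately. Since $\sigma^2$ is symmetric in its arguments I may assume $0\le s\le t$, and I set $\ell=t-s$, $a=T-s$, $b=T-t$, so that $a=b+\ell$ and $\ell\le a$. For the local bound on $[0,t]^2$ with $t<T$ fixed, no use of $\alpha<H$ is needed: on the relevant region every weight $(T-u)^{-\alpha}$ or $(T-u)^{-\alpha-1}$ lies between $(T-t)^{-\alpha-1}$ and $T^{-\alpha}$, hence is bounded by a constant depending only on $t$ and $T$. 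Discarding these weights reduces each $J_i$ to an elementary integral of the type $\int_s^t(v-u)^{2H-1}\,dv$ or $\int_s^t(t-u)^{2H-1}\,du$, each a constant multiple of $(t-s)^{2H}$ (in $J_1$ one extra factor $\int_s^t(t-u)^{2H}\,du\le t\,(t-s)^{2H}$ is absorbed into $C_{t,T}$). Summing yields $\sigma^2(u,v)\le C_{t,T}\abs{u-v}^{2H}$.

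For the uniform estimate under $\alpha<H$ the singular weights $(T-\cdot)^{-\alpha}$ must be retained rather than discarded. Two elementary facts drive the argument: the inequality $\ell\le a$, and the uniform Beta-type bound $\int_0^1(1-\rho x)^{-\alpha}x^{2H-1}\,dx\le B(2H,1-\alpha)$ for all $\rho\in[0,1]$, which follows from $1-\rho x\ge 1-x$. Applying the substitution $v=s+w$ followed by the rescaling $w=\ell x$ to $J_3$ gives $J_3=a^{-2\alpha}\ell^{2H}\int_0^1(1-\rho x)^{-\alpha}x^{2H-1}\,dx$ with $\rho=\ell/a\le1$; hence $J_3\le C\,a^{-2\alpha}\ell^{2H}=C\,(\ell/a)^{2\alpha}\ell^{2(H-\alpha)}\le C\,\ell^{2(H-\alpha)}$.

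For $J_1$ I first bound the inner $v$-integral exactly as for $J_3$ by $C(T-u)^{-\alpha}(t-u)^{2H}$, and then substitute $w'=t-u$ to obtain $J_1\le C\int_0^\ell(b+w')^{-2\alpha-1}(w')^{2H}\,dw'$; the same substitution turns $J_2$ into $J_2=b^{1-\alpha}\int_0^\ell(b+w')^{-\alpha-1}(w')^{2H-1}\,dw'$. Each of these is handled by a dichotomy on the sizes of $b$ and $\ell$: when $b\ge\ell$ one uses $(b+w')^{-p}\le b^{-p}$, integrates the pure power, and converts surplus powers of $\ell$ into powers of $b$ via $\ell\le b$; when $b<\ell$ one splits at the scale $w'=b$, using $b+w'\asymp b$ on $[0,b]$ and $b+w'\ge w'$ on $[b,\ell]$. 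In every case the hypothesis $\alpha<H$ makes the relevant exponents (such as $2(H-\alpha)$ and $2H-2\alpha$) strictly positive, so each piece is dominated by $\ell^{2(H-\alpha)}$, and summing the three estimates gives $\sigma^2(s,t)\le C\,\abs{s-t}^{2(H-\alpha)}$ with $C$ independent of $T$.

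The main obstacle is exactly the regime $b=T-t<\ell=t-s$ occurring in $J_1$ and $J_2$, where $s$ and $t$ both sit near the terminal time $T$ while their separation exceeds the distance to $T$; there the weights $(T-u)^{-\alpha-1}$ are genuinely singular near $u=t$, and any bound that merely pulls them out as $(T-t)^{-\alpha-1}$ blows up as $t\uparrow T$. Splitting the integral at the intrinsic scale $w'=T-t$ and exploiting the \emph{strict} inequality $\alpha<H$ (rather than only $\alpha<1$) is precisely what converts this near-singular contribution into a $T$-uniform multiple of $\abs{s-t}^{2(H-\alpha)}$.
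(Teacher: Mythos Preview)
Your argument is correct. For the first bound (on $[0,t]^2$ with $t<T$ fixed) you proceed essentially as the paper does: freeze the weights $(T-\cdot)^{-\alpha}$, $(T-\cdot)^{-\alpha-1}$ by their extreme values on $[0,t]$ and reduce to elementary power integrals.

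For the uniform bound under $\alpha<H$, however, your route differs from the paper's. The paper observes simply that $T-u\ge t-u$ and $T-v\ge t-v$ on the integration region, so every weight $(T-\cdot)^{-p}$ with $p>0$ may be replaced by the larger $(t-\cdot)^{-p}$; after this monotone substitution each $J_i$ becomes an exact Beta integral in $t-s$ (for example $J_1\le 2H\,B(1-\alpha,2H)\int_s^t(t-u)^{2(H-\alpha)-1}\,du$, and similarly $J_2\le\int_s^t(t-u)^{2(H-\alpha)-1}\,du$, $J_3\le B(1-\alpha,2H-\alpha)(t-s)^{2(H-\alpha)}$). This avoids any case analysis entirely. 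Your approach instead keeps the genuine variable $b=T-t$, performs the substitution $w'=t-u$, and then runs a dichotomy $b\gtrless\ell$ with a further split at $w'=b$ in the singular case. This is longer but perfectly valid, and it is a more ``scale-decomposition'' style argument that would survive in settings where a single dominating weight like $(t-\cdot)^{-p}$ is not available. The paper's trick, by contrast, exploits the specific monotone structure of these power weights to collapse the computation to one line per term.
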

\begin{proof}
According to Lemma~\ref{decomp.xi}, for any $ 0\le u< v\le t$, $\E\big[(\xi_u-\xi_v)^2\big]$ can be decomposed into three terms. We will bound each of them as follows: 
\begin{align*}
J_1(u,v)&\le 2H (T-t)^{-2\alpha} \int_{u}^{ v}(T-x)^{-1} \dif x \int_{x}^v  (y-x)^{2H-1} \dif y< (T-t)^{-2\alpha} (v-u)^{2H}  ,\\ 
J_2(u,v)&=(T-v)^{1-\alpha} \int_{u}^{ v} (T-x)^{-\alpha-1} (v-x)^{2H-1}\dif x <  \frac{1}{2H}(T-t)^{-2\alpha} (v-u)^{2H} , \\
J_3(u,v)& =(T-u)^{-\alpha} \int_{u}^{ v} (T-x)^{-\alpha} (x-u)^{2H-1}  \dif x<  \frac{1}{2H}(T-t)^{-2\alpha} (v-u)^{2H}.
\end{align*} 
Thus,
\begin{equation*}
 \E\big[(\xi_u-\xi_v)^2\big] = \sum_{i=1,2,3} J_i(u, v)\le 2(T-t)^{-2\alpha} \abs{u-v}^{2H}.
\end{equation*}
Similarly,  when $\alpha\in (0,H)$ and $H\in (0,1)$, for any $0\le s<t<T$,  we can decompose $\E\big[(\xi_s-\xi_t)^2\big]$ according to Lemma~\ref{decomp.xi} and each component is bounded as follows:  
\begin{align*}
J_1(s,t)&\le 2H \int_{s}^{ t}(t-u)^{-\alpha-1}\dif u \int_{u}^t (t-v)^{-\alpha}  (v-u)^{2H-1} \dif v\\
&= 2H B(1-\alpha, 2H) \int_{s}^{ t} (t-u)^{2(H-\alpha)-1} \dif u,\\ 
J_2(s,t)&\le\int_{s}^{ t} (T-u)^{1-\alpha}(T-u)^{-\alpha-1} (t-u)^{2H-1}\dif u \le \int_{s}^{ t}   (t-u)^{2(H-\alpha)-1}\dif u ,\\
J_3(s,t)& \le\int_{s}^t (t-v)^{- \alpha}  (v-s)^{2H-\alpha-1}  \dif v=B(1-\alpha, 2H-\alpha) (t-s)^{2(H-\alpha)},
\end{align*} where $B(\cdot, \cdot)$ denotes the beta function. Then,
\begin{equation*}
 \E\big[(\xi_s-\xi_t)^2\big]\le H\left[ \frac{H B(1-\alpha, 2H) +\frac12}{H-\alpha} +B(1-\alpha, 2H-\alpha)  \right]\abs{t-s}^{2(H-\alpha)}.
\end{equation*}
\end{proof}
Next, we will apply Proposition~\ref{dianzeduliang} and the Kolmogorov-Centsov theorem to prove Theorem~\ref{fbm-bridge exist} and Theorem~\ref{fbm bridge 2 exist}.\\

{\bf Proof of Theorem~\ref{fbm-bridge exist}: }
It follows from equation \eqref{zuihougj bds} that 
\begin{align}
\E[\xi_T^2]=\lim_{t\uparrow T}\E[\xi_t^2] =H\times \left[J_1(0,T)+J_2(0,T)+J_3(0,T) \right],\label{qidiands}
\end{align}where $J_1,J_2, J_3$ are given in \eqref{zhongjianzhy} and \eqref{zhongjzhy 2}.
It is evident that
\begin{align}
J_3(0,T)&=T^{-\alpha}  \int_{0}^T(T-v)^{-\alpha}  v^{2H-1}  \dif v=B(1-\alpha, 2H)T^{2(H-\alpha)},\label{i3tjixian}\\
\frac{1}{2H}J_1(0,T)&=\int_{0}^{ T}(T-u)^{-\alpha-1}\dif u \int_{u}^T (T-v)^{-\alpha}  (v-u)^{2H-1} \dif v\notag \\
&=\frac{B(1-\alpha, 2H)}{2(H-\alpha)}T^{2(H-\alpha)}.\label{J1tjixian}
\end{align}
For the term $J_2(0, T)$, making the change of variables $y=T-t,\,x=t-u$ and applying Lebesgue's dominated theorem yield
\begin{align}
J_2(0,T)&=\lim_{t\uparrow T} (T-t)^{1-\alpha}\int_{0}^{ t}(T-u)^{-\alpha-1} (t-u)^{2H-1}\dif u\notag \\
&=\lim_{y\to 0+} y^{1-\alpha}\int_{0}^{ T}(y+x)^{-\alpha-1} x^{2H-1}\1_{[0,T-y]}(x)\dif x\notag \\
&=\lim_{y\to 0+}\int_{0}^{ T}\left(\frac{y}{y+x}\right)^{1-\alpha} \left(\frac{x}{y+x}\right)^{2\alpha}  x^{2(H-\alpha)-1} \dif x=0.\label{j0tjixian}
\end{align} 
 Plugging the results \eqref{i3tjixian}-\eqref{j0tjixian} into equation \eqref{qidiands}, we obtain the desired result \eqref{xiT2qiw}.
 {\hfill\large{$\Box$}} 


 {\bf Proof of Theorem~\ref{fbm bridge 2 exist}: }
It follows from equation \eqref{zuihougj bds} that 
\begin{align}
\E[\tilde{Y}_T^2]&=\lim_{t\uparrow T} (T-t)^{2(\alpha-H)} \E[\xi_t^2] \notag \\
&=\lim_{t\uparrow T} H\times (T-t)^{2(\alpha-H)} \left[J_1(0,t)+J_2(0,t)+J_3(0,t) \right].\label{qidiandszata}
\end{align}
For the term $J_3(0, t)$, we have
\begin{align}
\lim_{t\uparrow T}  (T-t)^{2(\alpha-H)}  J_3(0,t)&=\lim_{t\uparrow T}  (T-t)^{2(\alpha-H)}  T^{-\alpha}  \int_{0}^t(T-v)^{-\alpha}  v^{2H-1}  \dif v\notag \\
&=B(1-\alpha, 2H)   T^{2(H-\alpha)} \lim_{t\uparrow T}  (T-t)^{2(\alpha-H)}=0. \label{zuihou0001}
\end{align}
For the term $J_1(0, t)$, we have 
\begin{align*}
&\frac{1}{2H}\lim_{t\uparrow T}  (T-t)^{2(\alpha-H)}  J_1(0,t)\notag \\
&=\lim_{t\uparrow T}  (T-t)^{2(\alpha-H)}  \int_{0}^{ t}(T-u)^{-\alpha-1}\dif u \int_{u}^t (T-v)^{-\alpha}  (v-u)^{2H-1} \dif v.\notag\\
\end{align*} By the change of variables $x=\frac{T-v}{T-t},\,y=\frac{T-v}{T-u}$, we have
\begin{align}
&\frac{1}{2H}\lim_{t\uparrow T}  (T-t)^{2(\alpha-H)}  J_1(0,t)\notag \\
&= \int_{1}^{ \infty}x^{2(H-\alpha)-1}\dif x \int_0^1  (1-y)^{2H-1} y^{\alpha-2H}\dif y = \frac{1}{2(\alpha-H)}B(2H,\,1+\alpha-2H).
\end{align}
For the term $J_2(0, t)$, we have 
\begin{align*}
\lim_{t\uparrow T}  (T-t)^{2(\alpha-H)}  J_2(0, t)&=\lim_{t\uparrow T} (T-t)^{1+\alpha-2H}\int_{0}^{ t}(T-u)^{-\alpha-1} (t-u)^{2H-1}\dif u\notag
\end{align*}
Making the change of variables $z=\frac{T-t}{T-u}$ yields
\begin{align}
\lim_{t\uparrow T}  (T-t)^{2(\alpha-H)}  J_2(0, t)
&=\int_0^1  (1-z)^{2H-1} z^{\alpha-2H} \dif z\notag \\
&=B(2H,\,1+\alpha-2H) .\label{zata0tjixian}
\end{align}
Plugging these results \eqref{zuihou0001}-\eqref{zata0tjixian} into equation \eqref{qidiandszata}, we obtain the desired result \eqref{zetaT2qiw}.

Next, to obtain the identity \eqref{zetaT2qiw-2}, it suffices to show the following limit 
\begin{equation}
\lim_{t\to T} \E[B_s^H \tilde{Y}_t]=0
\end{equation}
holds for any fixed $s\in (0,T)$. 
Recall $\alpha \in (H, 1)$. First, we write
\begin{align*}
\lim_{t\to T} \E[B_s^H \tilde{Y}_t]&=\lim_{t\to T} (T-t)^{\alpha-H} \left[\E[B_s^H (\xi_t-\xi_s)] + \E[B_s^H  \xi_s]\right]\notag\\
 &=\lim_{t\to T} (T-t)^{\alpha-H} \E[B_s^H (\xi_t-\xi_s)]. \notag
\end{align*}
When $t>s$, $\xi_t-\xi_s=B^H(f)$ with $f(u)=(T-u)^{-\alpha} \1_{[s,t]}(u)$ and $B^H_s=B^H(g)$ with $g(u)=\1_{[0,s]}(u)$.
Since $\alpha\in (H, 1)$ and the intersection of the supports of two elements $f,\,g\in \FH$ is of Lebesgue measure zero, it follows from It\^{o}'s isometry and equation \eqref{innp fg3-zhicheng0} (see \cite{Mishura}) that 
for any fixed $s\in (0,T)$ and $t>s$, we have
\begin{align*}
\lim_{t\to T} \E[B_s^H \tilde{Y}_t]
 &= H(2H-1)\lim_{t\uparrow T}(T-t)^{\alpha-H} \int_s^t   (T-u)^{-\alpha} \dif u \int_0^s  (u-v)^{2H-2} \dif v \notag\\
 &= H \lim_{t\uparrow T}(T-t)^{\alpha-H} \int_s^t   (T-u)^{-\alpha} \left( u^{2H-1} -(u-s)^{2H-1}\right) \dif u.\end{align*}
 Clearly,
 \begin{align*}
 \lim_{t\to T}\abs{ \E[B_s^H \tilde{Y}_t]}&\le H \lim_{t\to T}(T-t)^{\alpha-H} \int_s^T   (T-u)^{-\alpha}(u-s)^{2H-1}  \dif u\notag\\
 &=H\times B(1-\alpha, 2H) (T-s)^{2H-\alpha} \lim_{t\to T}(T-t)^{\alpha-H} =0.
 \end{align*}
 {\hfill\large{$\Box$}}

\section{Appendix}\label{appendix}

 Lemma~\ref{upper bound F} is trivial and well known. Please refer to Lemma 3.3 of \cite{chenzhou2021}. 
\begin{lemma} \label{upper bound F}
 Assume $\beta>-1$.  There exists a constant $C>0$ such that for any  $s\in [0,\infty)$,
\begin{align*}
e^{-  s}\int_0^{s} e^{  r} r^{\beta }\dif r&\le C \times\big(s^{\beta+1}\mathbbm{1}_{[0,1]}(s) + s^{\beta}\mathbbm{1}_{ (1,\,\infty)}(s)\big).
\end{align*}
Especially, when $\beta\in (-1,0)$, 
\begin{align*}
e^{-  s}\int_0^{s} e^{  r} r^{\beta }\dif r\le C \times(1\wedge s^{\beta}).
\end{align*}
\end{lemma}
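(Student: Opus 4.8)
The plan is to set $F(s):=e^{-s}\int_0^s e^{r}r^{\beta}\,\dif r$ and to treat the two regimes $s\in[0,1]$ and $s>1$ separately, showing in each case that the claimed power of $s$ governs the growth of $F$. For $s\in[0,1]$ the estimate is elementary: since $r\le s\le 1$ we have $e^{r}\le e$, so $\int_0^s e^{r}r^{\beta}\,\dif r\le e\int_0^s r^{\beta}\,\dif r=\frac{e}{\beta+1}s^{\beta+1}$, where the convergence of the last integral uses exactly the hypothesis $\beta>-1$. As $e^{-s}\le 1$, this yields the bound $F(s)\le\frac{e}{\beta+1}s^{\beta+1}$ on $[0,1]$.

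The substantive part is the regime $s>1$, where I want $F(s)\le Cs^{\beta}$. First I would split $\int_0^s=\int_0^1+\int_1^s$. The piece $c_0:=\int_0^1 e^{r}r^{\beta}\,\dif r$ is a finite constant, and after multiplying by $e^{-s}$ it is controlled by $s^{\beta}$ because $\sup_{s>1}e^{-s}s^{-\beta}<\infty$ (for $\beta\ge 0$ one has $s^{-\beta}\le 1$, while for $\beta<0$ the factor $s^{|\beta|}$ is polynomial and is beaten by the exponential). For the main piece I integrate by parts, which gives
\[
\int_1^s e^{r}r^{\beta}\,\dif r=e^{s}s^{\beta}-e-\beta\int_1^s e^{r}r^{\beta-1}\,\dif r .
\]
When $\beta\ge 0$ the remainder term is nonpositive and may be discarded, so $\int_1^s e^{r}r^{\beta}\,\dif r\le e^{s}s^{\beta}$ at once.

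The hard part will be the case $\beta\in(-1,0)$, where the remainder $-\beta\int_1^s e^{r}r^{\beta-1}\,\dif r$ is \emph{positive} and cannot simply be dropped; this is the only genuine obstacle. The key observation I would use is that $r^{\beta-1}\le r^{\beta}$ on $[1,\infty)$, so $\int_1^s e^{r}r^{\beta-1}\,\dif r\le\int_1^s e^{r}r^{\beta}\,\dif r$, which lets me absorb the remainder into the left-hand side. The identity then becomes the self-improving inequality $(1-|\beta|)\int_1^s e^{r}r^{\beta}\,\dif r\le e^{s}s^{\beta}$, and since $|\beta|<1$ this yields $\int_1^s e^{r}r^{\beta}\,\dif r\le\frac{1}{1-|\beta|}e^{s}s^{\beta}$. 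Multiplying by $e^{-s}$ and recombining with the constant piece gives $F(s)\le Cs^{\beta}$ for $s>1$, and together with the first paragraph this proves the first inequality. Finally, the special case $\beta\in(-1,0)$ follows from the general estimate at no extra cost: on $[0,1]$ we have $s^{\beta+1}\le 1=1\wedge s^{\beta}$ because $\beta+1>0$ and $s^{\beta}\ge 1$ there, while on $(1,\infty)$ we have $s^{\beta}=1\wedge s^{\beta}$, so the right-hand side collapses to $C(1\wedge s^{\beta})$.
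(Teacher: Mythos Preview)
Your proof is correct. The paper does not supply its own argument for this lemma; it simply declares the result ``trivial and well known'' and refers the reader to Lemma~3.3 of Chen--Zhou (2021) and Lemma~5.1 of Chen--Li--Sheng--Gu (2023), so there is no in-paper proof to compare against. Your treatment---the elementary bound on $[0,1]$, the split $\int_0^s=\int_0^1+\int_1^s$ for $s>1$, and especially the integration-by-parts plus self-absorption trick $(1-|\beta|)\int_1^s e^{r}r^{\beta}\,\dif r\le e^{s}s^{\beta}$ in the delicate range $\beta\in(-1,0)$---is a clean, self-contained justification that fills the gap the paper leaves to references.
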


\begin{lemma}\label{asymptotic expansion key-0} 
    Suppose  $\beta\in(-1,0)$. The following asymptotic expansion
    \begin{align*}  
    e^{-T } \int_0^T e^x x^{\beta} \dif x =T^{\beta}-\beta T^{\beta-1} +O(T^{\beta-2}).
    \end{align*}
     holds as $T\to \infty$.
\end{lemma}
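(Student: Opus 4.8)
The plan is to reduce the integral to one dominated by its behaviour near the upper endpoint $x=T$, where the factor $e^x$ concentrates the mass, and then extract the leading terms by a local Taylor expansion. First I would substitute $y=T-x$ to write
\[
e^{-T}\int_0^T e^x x^\beta \dif x=\int_0^T e^{-y}(T-y)^\beta\dif y.
\]
This converts a Laplace-type integral with an exponentially growing kernel into one with the rapidly decaying weight $e^{-y}$, so that the essential contribution comes from $y$ in a bounded neighbourhood of $0$, i.e.\ from $x$ near $T$.

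Next I would split the range at $y=T/2$. On $[T/2,T]$ the weight obeys $e^{-y}\le e^{-T/2}$, while $\int_{T/2}^{T}(T-y)^\beta\dif y=\int_0^{T/2}s^\beta\dif s=\frac{(T/2)^{\beta+1}}{\beta+1}$ is only polynomially large (finite because $\beta>-1$); hence this tail is $O\!\big(e^{-T/2}T^{\beta+1}\big)$, which is negligible compared with $T^{\beta-2}$. On the remaining interval $[0,T/2]$ I would factor $(T-y)^\beta=T^\beta(1-y/T)^\beta$ and apply Taylor's theorem to $u\mapsto(1-u)^\beta$ on $[0,\frac12]$,
\[
(1-y/T)^\beta=1-\beta\,\frac{y}{T}+R(y,T),\qquad \abs{R(y,T)}\le C\Big(\frac{y}{T}\Big)^2,
\]
the remainder bound being uniform because the second derivative of $(1-u)^\beta$ is continuous, hence bounded, on $[0,\frac12]$.

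Then I would integrate term by term. Using $\int_0^\infty e^{-y}\dif y=1$, $\int_0^\infty e^{-y}y\,\dif y=1$, and $\int_0^\infty e^{-y}y^2\,\dif y=2$, together with the fact that the tails of these integrals beyond $T/2$ are exponentially small, the constant term yields $T^\beta\big(1+O(e^{-T/2})\big)$, the linear term yields $-\beta T^{\beta-1}\big(1+O(Te^{-T/2})\big)$, and the remainder contributes at most $C\,T^\beta\cdot\frac{1}{T^2}\int_0^\infty e^{-y}y^2\,\dif y=O(T^{\beta-2})$. Collecting these, and absorbing the exponentially small corrections into the error, gives exactly $T^\beta-\beta T^{\beta-1}+O(T^{\beta-2})$.

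The main obstacle is that the Taylor expansion of $(1-y/T)^\beta$ is \emph{not} valid uniformly up to $y=T$: since $\beta<0$ the factor $(1-y/T)^\beta$ blows up as $y\uparrow T$, so the expansion cannot simply be integrated over all of $[0,T]$. The role of the split at $T/2$ is precisely to quarantine this singular region and dispatch it via the decay $e^{-y}$, while retaining the expansion on the region $y/T\le\frac12$ where the remainder is controlled; verifying that the discarded tail and the truncation errors are all $O(T^{\beta-2})$ is the only delicate bookkeeping.
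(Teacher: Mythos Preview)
Your argument is correct. The substitution $y=T-x$, the split at $T/2$, and the second-order Taylor expansion of $(1-y/T)^\beta$ on $[0,\frac12]$ are all carried out cleanly, and the error bookkeeping is accurate.

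The paper's proof, however, takes a much shorter route: it simply applies L'H\^{o}pital's rule to the ratio
\[
\frac{\int_0^T e^x x^{\beta}\,\dif x-e^{T}(T^{\beta}-\beta T^{\beta-1})}{e^{T}T^{\beta-2}},
\]
differentiates numerator and denominator, and observes that the limit is $\beta(\beta-1)$. This avoids the change of variable, the interval splitting, and the Taylor remainder estimates entirely. Your Laplace-type approach is more explanatory---it makes transparent why the expansion arises from the localisation of $e^{x}$ near $x=T$ and would generalise readily to higher-order terms or to other slowly varying factors in place of $x^{\beta}$---while the L'H\^{o}pital argument is essentially a one-line verification that exploits the fortunate cancellation in the derivative of $e^{T}(T^{\beta}-\beta T^{\beta-1})$.
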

\begin{proof}
Let the function $g(x)=e^{x}x^{\beta -2}$. Since  $\lim_{x\to\infty}g(x)=\infty$ and $g'(x)\neq 0$ in the neighbourhood of $\infty$, 
then the lemma can be proved by applying L'H\^{o}pital's rule to show the limit
    \begin{align*}
        \lim_{T\to \infty} \frac{e^{-T } \int_0^T e^x x^{\beta} \dif x- (T^{\beta}-\beta T^{\beta-1})}{T^{\beta-2}}&=\lim_{T\to \infty} \frac{\int_0^T e^x x^{\beta} \dif x- e^{T}(T^{\beta}-\beta T^{\beta-1})}{e^{T}T^{\beta-2}}\\
        &= \beta(\beta-1).
    \end{align*}

\end{proof}
\begin{lemma}\label{asymptotic expansion key}
    Suppose  $\alpha_1>-1$ and $\delta = 1+\alpha_1+\alpha_2\in (-1,1)$. The following asymptotic expansion holds.
    \begin{align*}
  &  \int_{0\le x \le z \le T } e^{x-z} x^{\alpha_1}  z ^{\alpha_2}\dif x \dif z \\
  &  =\left\{
      \begin{array}{ll}
\Gamma(\delta+1)\mathrm{B}(1+\alpha_1, -\delta)+\delta^{-1}T^{\delta} +O(T^{\delta-1}), & \quad  \delta\in (-1, 0),\\
\log T +o(\log T), & \quad \delta= 0,\\
\delta^{-1}T^{\delta}+\alpha_2\Gamma(\delta) {B}(1+\alpha_1,1-\delta){-}\frac{\alpha_1}{\alpha_1+\alpha_2}T^{\delta-1}+O(T^{\delta-2}),& \quad \delta\in (0,1).
 \end{array}
\right.     
    \end{align*} 
\end{lemma}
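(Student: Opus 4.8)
The plan is to collapse the double integral to a single one by Fubini's theorem: integrating in $x$ first, I write $I(T):=\int_{0\le x\le z\le T}e^{x-z}x^{\alpha_1}z^{\alpha_2}\,\dif x\,\dif z=\int_0^T z^{\alpha_2}g(z)\,\dif z$, where $g(z):=e^{-z}\int_0^z e^x x^{\alpha_1}\,\dif x=\int_0^z e^{-w}(z-w)^{\alpha_1}\,\dif w$ (and $\alpha_1>-1$ makes the inner integral, hence $I(T)$, well defined, with $g(z)\sim z^{\alpha_1+1}/(\alpha_1+1)$ near $0$). The whole argument rests on the large-$z$ asymptotics of $g$. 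Expanding $(1-w/z)^{\alpha_1}$ and discarding the exponentially small tail $w\gtrsim z$ (this is exactly Lemma~\ref{asymptotic expansion key-0} when $\alpha_1\in(-1,0)$, and an identical expansion, or one integration by parts, for $\alpha_1\ge 0$) gives $g(z)=z^{\alpha_1}-\alpha_1 z^{\alpha_1-1}+O(z^{\alpha_1-2})$, hence $z^{\alpha_2}g(z)=z^{\delta-1}-\alpha_1 z^{\delta-2}+O(z^{\delta-3})$.

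By the fundamental theorem of calculus $I'(T)=T^{\alpha_2}g(T)=T^{\delta-1}-\alpha_1 T^{\delta-2}+O(T^{\delta-3})$, so integrating in $T$ produces all the stated powers at once: the leading $\delta^{-1}T^\delta$, the correction $-\frac{\alpha_1}{\delta-1}T^{\delta-1}=-\frac{\alpha_1}{\alpha_1+\alpha_2}T^{\delta-1}$ (using $\delta-1=\alpha_1+\alpha_2$), and an error of the right order once one notes $\int^\infty s^{\delta-3}\,\dif s$ converges for $\delta<1$. In the borderline case $\delta=0$ the integrand behaves like $s^{-1}$, giving $I(T)=\log T+O(1)$, which is a fortiori $\log T+o(\log T)$.

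The real content is the constant of integration in the two non-logarithmic cases. For $\delta\in(-1,0)$ the integral converges at infinity, so the constant is just $I(\infty)$; I compute it by the change of variables $w=z-x$ followed by $x=wt$, which factorizes $I(\infty)=\int_0^\infty e^{-w}w^{\delta}\,\dif w\cdot\int_0^\infty t^{\alpha_1}(1+t)^{\alpha_2}\,\dif t=\Gamma(\delta+1)B(1+\alpha_1,-\delta)$, the second factor being a standard Beta integral, convergent precisely because $\delta<0$. Then writing $I(T)=I(\infty)-\int_T^\infty z^{\alpha_2}g(z)\,\dif z$ and inserting the asymptotics of $g$ gives the claimed expansion with error $O(T^{\delta-1})$.

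For $\delta\in(0,1)$ the integral diverges at infinity, but since $\delta<1$ the constant is recovered as the convergent regularized limit $C=\lim_{T\to\infty}\big(I(T)-\delta^{-1}T^\delta\big)=\int_0^\infty z^{\alpha_2}\big(g(z)-z^{\alpha_1}\big)\,\dif z$ (the integrand is $O(z^{\delta-1})$ at $0$ and $O(z^{\delta-2})$ at infinity). I would evaluate $C$ by replacing $z^{\alpha_1}=\int_0^\infty e^{-w}z^{\alpha_1}\,\dif w$, swapping the order of integration and rescaling, which reduces $C$ to $\Gamma(\delta+1)K-\Gamma(\delta)$ with $K=\int_0^1(u^{\alpha_1}-1)(1-u)^{-\delta-1}\,\dif u$ after the substitution $t=u/(1-u)$. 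One integration by parts turns $K$ into $\frac{1}{\delta}-\frac{\alpha_1}{\delta}B(\alpha_1,1-\delta)$, and the identity $B(1+\alpha_1,1-\delta)=\frac{\alpha_1}{1+\alpha_1-\delta}B(\alpha_1,1-\delta)$ together with $\alpha_2=\delta-1-\alpha_1$ converts $-\alpha_1 B(\alpha_1,1-\delta)$ into $\alpha_2 B(1+\alpha_1,1-\delta)$, so that $C=\alpha_2\Gamma(\delta)B(1+\alpha_1,1-\delta)$. The main obstacle is precisely this constant: the boundary term in the integration by parts is literally valid only for $\alpha_1>0$, so for $\alpha_1\in(-1,0)$ I would extend the formula by analyticity, noting that both $C$ and $\alpha_2\Gamma(\delta)B(1+\alpha_1,1-\delta)$ are analytic in $\alpha_1\in(-1,\infty)$ and agree on $(0,\infty)$.
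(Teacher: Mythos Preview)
Your proof is correct. For $\delta\in(-1,0)$ and $\delta=0$ it coincides with the paper's argument up to phrasing: you extract the power terms from the asymptotics of $I'(T)=T^{\alpha_2}g(T)$, where the paper invokes L'H\^opital on the same ratio; and you compute $I(\infty)$ by the substitution $w=z-x$, $x=wt$, where the paper uses $x=pz$---both give $\Gamma(\delta+1)B(1+\alpha_1,-\delta)$ immediately.

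The genuine difference is the case $\delta\in(0,1)$. The paper reduces to the already-proved regime by a single integration by parts in $z$ (with $u=z^{\alpha_2}\int_0^z e^x x^{\alpha_1}\,\dif x$ and $\dif v=e^{-z}\,\dif z$), yielding
\[
I(T)=\delta^{-1}T^\delta-T^{\alpha_2}e^{-T}\!\int_0^T e^x x^{\alpha_1}\,\dif x+\alpha_2\!\int_{0\le x\le z\le T}e^{x-z}x^{\alpha_1}z^{\alpha_2-1}\,\dif x\,\dif z.
\]
The last integral has parameter $\delta-1\in(-1,0)$, so the first case applies and the constant $\alpha_2\Gamma(\delta)B(1+\alpha_1,1-\delta)$ drops out directly, valid for all $\alpha_1>-1$. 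Your route---identifying the constant as the regularized integral $\int_0^\infty z^{\alpha_2}(g(z)-z^{\alpha_1})\,\dif z$, factoring through $\Gamma(\delta+1)K-\Gamma(\delta)$, and then integrating $K$ by parts---gets the same answer but is heavier: both the boundary term at $u=0$ and the integral $B(\alpha_1,1-\delta)$ require $\alpha_1>0$, so you must patch $\alpha_1\in(-1,0]$ by analytic continuation (which is legitimate, though you only assert the needed analyticity). The paper's integration-by-parts reduction is the cleaner way to obtain the constant uniformly in $\alpha_1$.
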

\begin{proof}
When $\delta\in (-1,0)$, we must have $\alpha_2<0$. The change of variable $x=pz$ with $p\in (0,1)$ implies that
\begin{align*}
    \int_{0\le x \le z } e^{x-z} x^{\alpha_1}  z ^{\alpha_2}\dif x \dif z  &=\int_0^1 p^{\alpha_1}\dif p \int_0^{\infty} e^{-(1-p)z}z^{\delta}\dif z
    =\Gamma(1+\delta) {B}(1+\alpha_1, -\delta).
\end{align*}Hence, 
\begin{align*}
     \int_{0\le x \le z \le T } e^{x-z} x^{\alpha_1}  z ^{\alpha_2}\dif x \dif z-\Gamma(1+\delta) {B}(1+\alpha_1, -\delta)=-\int_{0\le x \le z, z>T } e^{x-z} x^{\alpha_1}  z ^{\alpha_2}\dif x \dif z
\end{align*}is $o(1)$.
Then the hypotheses of L'H\^{o}pital's rule are fulfilled and we have 
\begin{align*}
   \lim_{T\to\infty}  \frac{-\int_{0\le x \le z,z>T } e^{x-z} x^{\alpha_1}  z ^{\alpha_2}\dif x \dif z -\delta^{-1} T^{\delta}}{ T^{\delta-1}}&=  \lim_{T\to\infty}  \frac{ e^{-T}T^{\alpha_2}\int_{0}^T e^{x} x^{\alpha_1}  \dif x   -  T^{\delta-1}}{(\delta-1) T^{\delta-2}}\\
   &=\lim_{T\to\infty}  \frac{  \int_{0}^T e^{x} x^{\alpha_1}  \dif x   -  e^T T^{\alpha_1}}{(\delta-1) e^{T}T^{\alpha_1-1}}.
\end{align*}
The function $g(T)=e^{T}T^{\alpha_1-2}$ satisfies $g(\infty)=\infty$ and $g'(T)\neq 0$ in the neighbourhood of $\infty$. Applying L'H\^{o}pital's  rule to the above ratio again yields
 \begin{align*}
    \lim_{T\to\infty}  \frac{  \int_{0}^T e^{x} x^{\alpha_1}  \dif x   -  e^T T^{\alpha_1}}{(\delta-1) e^{T}T^{\alpha_1-1}}=-\frac{\alpha_1}{\delta-1}.
 \end{align*}
This finishes the proof for the case $\delta\in (-1,0)$.

When $\delta=0$, $-1-\alpha_2=\alpha_1$. It is clear that  the functions $g(T)=\log T$ and $h(T)=e^T T^{\alpha_1}$ fulfill the hypotheses of L'H\^{o}pital's rule. Then we have
 \begin{align*}
    \lim_{T\to \infty} \frac{\int_{0\le x \le z \le T } e^{x-z} x^{\alpha_1}  z ^{\alpha_2}\dif x \dif z -\log T}{\log T}&=\lim_{T\to \infty} \frac{ e^{-T}T^{\alpha_2}\int_0^T e^xx^{\alpha_1}  \dif x -T^{-1}}{T^{-1}}\\
    &=\lim_{T\to \infty} \frac{ \int_0^T e^xx^{\alpha_1}  \dif x -e^T T^{\alpha_1}}{e^T T^{\alpha_1}}=0.
 \end{align*} Hence, the lemma is proved for the case $\delta=0$.

When $\delta\in (0,1)$, the result can be obtained using integration by parts and Lemma~\ref{asymptotic expansion key-0}: 
\begin{align*}
   & \int_{0\le x \le z \le T } e^{x-z} x^{\alpha_1}  z ^{\alpha_2}\dif x \dif z\\
    &=\delta^{-1}T^{\delta}-T^{\alpha_2}e^{-T}\int_0^T e^{x} x^{\alpha_1}\dif x+\alpha_2\int_{0\le x \le z \le T } e^{x-z} x^{\alpha_1}  z ^{\alpha_2-1}\dif x \dif z\\
    &=\delta^{-1}T^{\delta}-T^{\alpha_2}\big(T^{\alpha_1}-\alpha_1T^{\alpha_1-1}\big)+\alpha_2 \left[\Gamma(\delta) {B}(1+\alpha_1, 1-\delta)+ \frac{1}{\delta-1}T^{\delta-1} +O(T^{\delta-2})\right]\\ &=\delta^{-1}T^{\delta}+\alpha_2\Gamma(\delta) {B}(1+\alpha_1,1-\delta){-}\frac{\alpha_1}{\alpha_1+\alpha_2}T^{\delta-1}+O(T^{\delta-2}).
\end{align*} 
\end{proof}
\begin{lemma}\label{coro key point000}
Let $\kappa$ be as in \eqref{kapp notation}.   When $H\in (\frac{1}{4},\frac12)$ and $\gamma=\lambda- \mi \omega$ with $\lambda>0,\,\omega\in \Rnum$, we have  
    \begin{align}
 \int_{0\le x \le z \le T } e^{\gamma(x-z)} x^{2H-1}  z ^{2H}\dif x \dif z&=  
 \frac{1}{4H\gamma }  T^{4H}+\frac{1-2H}{(4H-1)\gamma^2} T^{4H-1}+ \frac{2H\kappa}{ \gamma^{1+4H}}  \notag\\
    & +\frac{H-1}{\gamma^3}T^{4H-2}+O(T^{4H-3}), \label{changyongjifen} \\
  \int_{0\le x \le z \le T } e^{\gamma(x-z)} x^{2H}  z ^{2H-1}\dif x \dif z&=  
 \frac{1}{4H\gamma }  T^{4H}-\frac{2H}{(4H-1)\gamma^2} T^{4H-1}- \frac{2H\kappa}{ \gamma^{1+4H}}\notag\\
    & +\frac{H}{\gamma^3}T^{4H-2}  +O(T^{4H-3}), \label{changyongjifen-000} \\
  \int_{0\le x \le z \le T } e^{\gamma(x-z)} x^{2H-1}  z ^{2H}(z-x)\dif x \dif z &=\frac{1}{4H\gamma^2 }  T^{4H}+\frac{2(1-2H)}{(4H-1)\gamma^3} T^{4H-1}\notag \\
  &+\frac{2H (4H+1)\kappa}{ \gamma^{2+4H}}+\frac{H}{\gamma^3}T^{4H-2} +O(1).\label{changyongjifen-buch-000}
    \end{align}
\end{lemma}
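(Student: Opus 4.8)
The plan is to reduce all three integrals to a single \emph{base integral} whose exponent sum lies in $(-1,0)$ by a one-step integration-by-parts recursion, evaluate that base integral exactly by extending its domain to infinity, and then collect powers of $T$. Write $I(\alpha_1,\alpha_2;T)=\int_{0\le x\le z\le T}e^{\gamma(x-z)}x^{\alpha_1}z^{\alpha_2}\dif x\dif z$ and $\delta=1+\alpha_1+\alpha_2$. Integrating first in $z$ (for fixed $x$) by parts, via $\frac{\dif}{\dif z}\!\big(-\tfrac1\gamma e^{-\gamma z}z^{\alpha_2}\big)=e^{-\gamma z}z^{\alpha_2}-\tfrac{\alpha_2}{\gamma}e^{-\gamma z}z^{\alpha_2-1}$, yields the exact recursion
\[
I(\alpha_1,\alpha_2;T)=\frac{T^{\delta}}{\gamma\delta}-\frac{1}{\gamma}T^{\alpha_2}e^{-\gamma T}\int_0^T e^{\gamma x}x^{\alpha_1}\dif x+\frac{\alpha_2}{\gamma}\,I(\alpha_1,\alpha_2-1;T).
\]
For \eqref{changyongjifen} and \eqref{changyongjifen-000} the exponent sum is $\delta=4H\in(1,2)$, so applying this twice lowers the second exponent until the sum becomes $4H-2\in(-1,0)$; the hypothesis $H>\tfrac14$ is exactly what guarantees the recursion terminates at a \emph{convergent} base integral rather than a divergent one, which is the structural source of the breakdown for $H\le\tfrac14$.

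For the two kinds of building blocks I would first record the complex analogue of Lemma~\ref{asymptotic expansion key-0}: substituting $x=T-w$ and using dominated convergence (legitimate since $\RE\gamma=\lambda>0$) gives $e^{-\gamma T}\int_0^T e^{\gamma x}x^{\beta}\dif x=\frac{T^{\beta}}{\gamma}-\frac{\beta T^{\beta-1}}{\gamma^2}+O(T^{\beta-2})$. For the base integral with $\delta\in(-1,0)$ I would extend the region to $0\le x\le z<\infty$; the change of variables $x=pz$, $p\in(0,1)$, produces
\[
\int_{0\le x\le z<\infty}e^{\gamma(x-z)}x^{\alpha_1}z^{\alpha_2}\dif x\dif z=\frac{\Gamma(\delta+1)}{\gamma^{\delta+1}}B(\alpha_1+1,-\delta),
\]
while the tail over $z>T$ is controlled by an L'H\^opital argument as in the $\delta\in(-1,0)$ case of Lemma~\ref{asymptotic expansion key}, contributing $\tfrac{T^{\delta}}{\gamma\delta}+O(T^{\delta-1})$. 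Substituting these into the iterated recursion and collecting powers reproduces the $T^{4H}$, $T^{4H-1}$, $T^{4H-2}$ terms; the genuine constant arises \emph{only} from the base-integral $\Gamma/B$ value, and the functional equation $\Gamma(3-4H)=(2-4H)\Gamma(2-4H)$ together with the definition \eqref{kapp notation} of $\kappa$ collapses it to $\tfrac{2H\kappa}{\gamma^{1+4H}}$ for \eqref{changyongjifen} and to $-\tfrac{2H\kappa}{\gamma^{1+4H}}$ for \eqref{changyongjifen-000}.

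For \eqref{changyongjifen-buch-000} I would avoid a third recursion step by exploiting $\partial_\gamma e^{\gamma(x-z)}=-(z-x)e^{\gamma(x-z)}$, so the left-hand side equals $-\partial_\gamma I(2H-1,2H;T)$. Differentiation under the integral sign is justified by dominated convergence (the extra factor $z-x$ is absorbed by decreasing $\lambda$ slightly); differentiating the exact recursion identity term by term — each term being an explicit function of $\gamma$ — and then expanding the resulting explicit expression by the same substitution and L'H\^opital tools gives $-\partial_\gamma$ of the right-hand side of \eqref{changyongjifen}. This reproduces the three displayed leading coefficients $\tfrac{1}{4H\gamma^2}T^{4H}$, $\tfrac{2(1-2H)}{(4H-1)\gamma^3}T^{4H-1}$ and $\tfrac{2H(4H+1)\kappa}{\gamma^{2+4H}}$ at once; the $T^{4H-2}$ entry is $o(1)$ and, like the remaining remainder, is absorbed into the stated $O(1)$ error (so its precise coefficient is immaterial).

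The main obstacle is not any individual step but the sustained bookkeeping: each boundary term must be expanded to two orders, and the coefficients at orders $T^{4H}$, $T^{4H-1}$, $T^{4H-2}$ each receive contributions from several stages of the recursion that must cancel and recombine precisely — for instance the coefficient $\tfrac{1-2H}{(4H-1)\gamma^2}$ of $T^{4H-1}$ appears only after summing a boundary term with a leading term. The one genuinely delicate point is the identification of the constant: it is generated entirely by the convergent base integral, and recognizing its $\Gamma/B$ value as the specific multiple of $\kappa$ in \eqref{kapp notation} is what ties these expansions back to the constants used in Proposition~\ref{qq1a}.
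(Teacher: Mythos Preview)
Your argument for \eqref{changyongjifen} and \eqref{changyongjifen-000} is the paper's argument: the paper integrates by parts once in the $z$-variable (after a rescaling $u=\gamma x$, $v=\gamma z$, which is why it first restricts to real $\gamma$) and then invokes Lemma~\ref{asymptotic expansion key} in the $\delta\in(0,1)$ regime for the resulting integral $\int_{0\le u\le v\le \gamma T}e^{u-v}u^{2H-1}v^{2H-1}\dif u\dif v$. Since the $\delta\in(0,1)$ case of that lemma is itself proved by one more integration by parts down to $\delta\in(-1,0)$, your ``apply the recursion twice'' is simply unrolling the same computation. Your handling of complex $\gamma$ directly is actually a bit cleaner than the paper's unjustified ``it suffices to take $\omega=0$''.

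For \eqref{changyongjifen-buch-000} you take a genuinely different route: the paper writes $x^{2H-1}z^{2H}(z-x)=x^{2H-1}z^{2H+1}-x^{2H}z^{2H}$, integrates each piece by parts (in $z$ for the first, in $x$ for the second) to reduce both to \eqref{changyongjifen} and the boundary expansion of Lemma~\ref{asymptotic expansion key-0}, and adds. Your $-\partial_\gamma$ device is elegant and correctly reproduces the three displayed coefficients, but it hides a step you gloss over: concluding that ``expanding the differentiated exact identity'' gives exactly $-\partial_\gamma$ of the expansion of \eqref{changyongjifen} requires knowing the remainder $O(T^{4H-3})$ in \eqref{changyongjifen} is locally uniform in $\gamma$ (so that asymptotics commute with $\partial_\gamma$), or else re-expanding each differentiated term from scratch, which is essentially the same labor as the paper's direct split. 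Either way the argument closes; the paper's route is just slightly more self-contained because it never needs that uniformity. Your observation that the stated $T^{4H-2}$ term in \eqref{changyongjifen-buch-000} is $o(1)$ and hence swallowed by $O(1)$ is correct and explains why your $\partial_\gamma$ computation need not match that particular coefficient.
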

\begin{proof} 
It suffices to show that equation \eqref{changyongjifen} holds for $\omega=0$, i.e, $\gamma>0$.  Using integration by parts and making the change of variable $u=\gamma x, v=\gamma z$ yield
  \begin{align*}
    & \int_{0\le x \le z \le T } e^{\gamma(x-z)} x^{2H-1}  z ^{2H}\dif x \dif z=-\frac{1}{\gamma^{4H+1}}\int_0^{\gamma T} e^u u^{2H-1}\dif u\int_{u}^{\gamma T} v^{2H}\dif e^{-v}\\
    &=\frac{1}{\gamma^{4H+1}}\left[-(\gamma T)^{2H}e^{-\gamma T}\int_0^{\gamma T} e^u u^{2H-1}\dif u +\int_0^{\gamma T} u^{4H-1}\dif u\right.\\
    &\left.+2H \int_{0\le u \le v \le  \gamma T } e^{ u-v} u^{2H-1}  v ^{2H-1}\dif u \dif v\right]\\
    &={\frac{1}{4H\gamma }  T^{4H}+\frac{1-2H}{(4H-1)\gamma^2} T^{4H-1} +2H\kappa \gamma^{-1-4H}}  +\frac{H-1}{\gamma^3}T^{4H-2}+O(T^{4H-3}),
\end{align*}  where in the last line we use Lemma~\ref{asymptotic expansion key-0} and Lemma~\ref{asymptotic expansion key}. Equation \eqref{changyongjifen-000} can be shown similarly.

Next, we will prove \eqref{changyongjifen-buch-000}. Using integration by parts yields
  \begin{align*}
    & \int_{0\le x \le z \le T } e^{\gamma(x-z)} x^{2H-1}  z ^{2H+1}\dif x \dif z=-\frac{1}{\gamma}\int_0^{  T} e^{\gamma x} x^{2H-1}\dif x\int_{x}^{ T} z^{2H+1}\dif e^{-\gamma z}\\
    &=\frac{1}{\gamma}\left[-T^{2H+1}e^{-\gamma T}\int_0^{T} e^{\gamma x} x^{2H-1}\dif x +\int_0^{  T} x^{4H}\dif x \right.\\
    & \left. +(2H+1) \int_{0\le x \le y \le    T } e^{ \gamma (x-z)} x^{2H-1}  z ^{2H-1}\dif x \dif z\right],
  \end{align*}
  and
  \begin{align*}
    & \int_{0\le x \le z \le T } e^{\gamma(x-z)} x^{2H}  z ^{2H}\dif x \dif z=\frac{1}{\gamma}\int_0^{  T} e^{-\gamma z} z^{2H}\dif z\int_{0}^{ z} x^{2H}\dif e^{\gamma x} \\
    &=\frac{1}{\gamma}\left[\int_0^{  T} x^{4H}\dif x-2H \int_{0\le x \le y \le T} e^{ \gamma (x-z)} x^{2H-1}  z ^{2H-1}\dif x \dif z\right].
\end{align*} Hence, Lemma~\ref{asymptotic expansion key-0} and equation \eqref{changyongjifen} imply
\begin{align*}
 & \int_{0\le x \le z \le T } e^{\gamma(x-z)} x^{2H-1}  z ^{2H}(z-x)\dif x \dif z \\
 &=\frac{1}{\gamma}\left( (4H+1) \int_{0\le x \le y \le    T } e^{ \gamma (x-z)} x^{2H-1}  z ^{2H-1}\dif x \dif z -T^{2H+1}e^{-\gamma T}\int_0^{T} e^{\gamma x} x^{2H-1}\dif x\right) \\
 &=\frac{1}{4H\gamma^2 }  T^{4H}+\frac{2(1-2H)}{(4H-1)\gamma^3} T^{4H-1}+\frac{2H (4H+1)\kappa}{ \gamma^{2+4H}}+\frac{H}{\gamma^3}T^{4H-2} +O(1).
\end{align*}
\end{proof}

\begin{lemma}\label{est.f}
   Suppose that $f_{\alpha}(\xi,\eta,\eta')$ is given by equation \eqref{falpha}.
   We have the following results:
  \begin{description}
   \item [(i)] If $\alpha \in (0,\frac12)$, there exists some positive constant $K_1:=K_1(\alpha, H)$ such that
  \begin{equation*}
	\int_{(0,\eta+\eta')}f_{\alpha}(\xi,\eta,\eta') d\xi \leq K_1 \eta^{1-2H} (\eta-\eta')^{-1+2\alpha} \,.
  \end{equation*}

  \item [(ii)] If $\alpha \in (0,H)$, there exists some positive constant $K_2:=K_2(\alpha, H)$ such that
  \begin{eqnarray*}
	\int_{[\eta+\eta', \infty)} f_{\alpha}(\xi,\eta,\eta') d\xi \leq K_2 (\eta')^{2\alpha-2H} \,.
  \end{eqnarray*}
  \end{description}
\end{lemma}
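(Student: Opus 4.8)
The plan is to treat the two bounds separately, since they rely on different mechanisms and on different ranges of $\alpha$. Throughout I assume $0<\eta'<\eta$; the boundary case $\eta=\eta'$ forces both sides of (i) to equal $+\infty$ — the two singularities of the integrand merge into a non-integrable one of order $-2+2\alpha<-1$ — and it may be discarded, as it forms a null set in the application.

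For part (i), the first observation is that on the truncation domain $\xi\in(0,\eta+\eta')$ the factor $\xi^{1-2H}$ is increasing, since $1-2H>0$ for $H\in(0,\frac12)$; hence $\xi^{1-2H}\le(\eta+\eta')^{1-2H}\le(2\eta)^{1-2H}$, which already yields the prefactor $\eta^{1-2H}$. It then remains to control $\int_0^{\eta+\eta'}\abs{\xi-\eta}^{-1+\alpha}\abs{\xi-\eta'}^{-1+\alpha}\,d\xi$. Because the integrand is nonnegative I would enlarge the domain to all of $\mathbb{R}$ and recognise the resulting expression as the convolution of two Riesz kernels; the substitution $\xi=\eta'+(\eta-\eta')u$ then gives
\begin{equation*}
\int_{\mathbb{R}}\abs{\xi-\eta}^{-1+\alpha}\abs{\xi-\eta'}^{-1+\alpha}\,d\xi=(\eta-\eta')^{-1+2\alpha}\,C_\alpha,\qquad C_\alpha:=\int_{\mathbb{R}}\abs{u-1}^{-1+\alpha}\abs{u}^{-1+\alpha}\,du .
\end{equation*}
The constant $C_\alpha$ is finite precisely when $\alpha\in(0,\frac12)$: the local exponent $-1+\alpha>-1$ ensures integrability at the two singularities $u=0,1$, while the tail exponent $-2+2\alpha<-1$ ensures integrability at infinity. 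Combining the two displays gives (i) with $K_1=2^{1-2H}C_\alpha$.

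For part (ii) there are no interior singularities, since $\xi\ge\eta+\eta'$ forces $\xi-\eta\ge\eta'>0$ and $\xi-\eta'\ge\eta>0$. Here the idea is to collapse the three factors into a single power. From $\xi\ge\eta+\eta'\ge2\eta'$ I obtain $\xi-\eta'\ge\frac{1}{2}\xi$, whence $(\xi-\eta')^{-1+\alpha}\le 2^{1-\alpha}\xi^{-1+\alpha}$; and since $\alpha-2H<0$ while $\xi\ge\xi-\eta>0$, I also have $\xi^{\alpha-2H}\le(\xi-\eta)^{\alpha-2H}$. Multiplying these estimates reduces the integrand to $2^{1-\alpha}(\xi-\eta)^{-1+2\alpha-2H}$, so the substitution $w=\xi-\eta$ (lower limit $w=\eta'$) yields
\begin{equation*}
\int_{\eta+\eta'}^{\infty}f_\alpha(\xi,\eta,\eta')\,d\xi\le 2^{1-\alpha}\int_{\eta'}^{\infty}w^{-1+2\alpha-2H}\,dw=\frac{2^{-\alpha}}{H-\alpha}\,(\eta')^{2\alpha-2H},
\end{equation*}
giving (ii) with $K_2=2^{-\alpha}/(H-\alpha)$. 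The last integral converges exactly because $\alpha\in(0,H)$ makes the exponent $-1+2\alpha-2H<-1$; this is the sole place where the stronger hypothesis $\alpha<H$ (rather than merely $\alpha<\frac12$) enters, and it is what licenses integrating all the way to infinity.

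The underlying computations are elementary, so the main obstacle is not analytic depth but bookkeeping: one must track the exponents carefully to see that $\alpha\in(0,\frac12)$ is exactly what makes the full-line convolution in (i) finite, whereas $\alpha<H$ is exactly what makes the tail in (ii) integrable, and one must handle the coincidence $\eta=\eta'$ in (i) before passing to the scale-invariant integral over $\mathbb{R}$.
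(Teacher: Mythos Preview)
Your argument is correct. For part (ii) it is essentially the paper's proof: both collapse the three factors to a single power $(\,\cdot\,)^{-1+2\alpha-2H}$ via the comparisons $\xi-\eta'\gtrsim \xi$ and $\xi^{\alpha-2H}\le(\xi-\eta)^{\alpha-2H}$, differing only in whether one shifts by $\eta+\eta'$ or by $\eta$ before integrating. For part (i), however, you take a genuinely different route. The paper partitions $(0,\eta+\eta')$ into the three subintervals $(0,\eta']$, $(\eta',\eta]$, $(\eta,\eta+\eta')$ and bounds each piece separately, using a different change of variable on the two outer pieces and the Beta integral $\mathrm{B}(\alpha,\alpha)$ on the middle one. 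Your approach is more economical: you bound $\xi^{1-2H}$ once by $(2\eta)^{1-2H}$, enlarge the domain to the whole line, and identify the remaining integral as the scale-invariant convolution $\int_{\mathbb R}|u|^{-1+\alpha}|u-1|^{-1+\alpha}\,du$, whose finiteness for $\alpha\in(0,\tfrac12)$ is a one-line tail-and-singularity check. The paper's partition makes the local mechanism at each singularity more visible, while your convolution argument is shorter and gives an explicit constant in a single stroke; both yield the same bound $K_1\,\eta^{1-2H}(\eta-\eta')^{-1+2\alpha}$.
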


\begin{proof}
   (i): We  partition $(0,\eta+\eta')$ into three intervals: $(0, \eta'] \cup (\eta', \eta] \cup (\eta, \eta+\eta')$.  For $\xi \in (0, \eta']$, we make a change of variable $\eta'-\xi \to (\eta - \eta')x$. Since $\alpha\in (0,\frac12)$, we can obtain
\begin{eqnarray}
  \int_0^{\eta'} f_{\alpha}(\xi,\eta,\eta') d \xi & \leq & (\eta')^{1-2H} (\eta - \eta')^{-1+2\alpha} \int_0^{\frac{\eta'}{\eta - \eta'}} x^{-1+\alpha} (1+x)^{-1+\alpha} dx \nonumber\\
  & \leq & C \eta^{1-2H} (\eta - \eta')^{-1+2\alpha} \label{fxi-4}.
\end{eqnarray}
For $\xi \in (\eta', \eta]$, observe that
\begin{eqnarray}
  \int_{\eta'}^{\eta} f_{\alpha}(\xi,\eta,\eta') d \xi & \leq & \eta^{1-2H} \int_{\eta'}^{\eta} (\eta-\xi)^{-1+\alpha} (\xi-\eta')^{-1+\alpha} d\xi \nonumber \\
  & = &  {\rm B}(\alpha, \alpha) \eta^{1-2H} (\eta - \eta')^{-1+2\alpha}. \label{fxi-3}
\end{eqnarray}
For $\xi \in (\eta, \eta' + \eta]$, we make a change of variable $\xi - \eta \to (\eta - \eta')x$ to obtain
 \begin{eqnarray}
  \int_{\eta}^{\eta+\eta'} f_{\alpha}(\xi,\eta,\eta') d\xi & \leq & (2\eta)^{1-2H} (\eta - \eta')^{-1+2\alpha} \int_0^{\frac{\eta'}{\eta - \eta'}} x^{-1+\alpha} (1+x)^{-1+\alpha}  dx \nonumber \\
  &\leq& C \eta^{1-2H} (\eta - \eta')^{-1+2\alpha}, \label{fxi-2}
 \end{eqnarray}where the last inequality is from $\alpha\in (0,\frac12)$.

By  \eqref{fxi-4}, \eqref{fxi-3}, and \eqref{fxi-2}, the first part of lemma is obtained.

(ii) If $\alpha \in (0,H)$, then
  \begin{eqnarray}
   \int_{\eta+\eta'}^{\infty} f_{\alpha}(\xi,\eta,\eta') d\xi & = & \int_0^{\infty} (x+\eta')^{-1+\alpha} (x+\eta)^{-1+\alpha} (x+\eta+\eta')^{1-2H} dx \nonumber \\
   &\leq& 2^{1-2H} \int_0^{\infty} (x+\eta')^{-1+\alpha} (x+\eta)^{-1+\alpha} (x+\eta)^{1-2H} dx \nonumber \\
   &\leq &  \int_0^{\infty} (x+\eta')^{-1+2\alpha-2H} dx =  K(\eta')^{2\alpha-2H},
   \end{eqnarray}
and this finishes the proof of the second part of lemma.
\end{proof}  		
\begin{lemma}\label{coro-zhou}
     Suppose that $f_{\alpha}(\xi,\eta,\eta')$ is defined by equation \eqref{falpha} and $g( \eta,\eta')$ is defined by equation \eqref{geta}.
   For $H \in (0, \frac{1}{2})$, we have the following results.

  \begin{description}
   \item [(i)] If $H\in (\frac{1}{6},\,\frac12)$ and $\alpha_1 \in (\frac14,\frac12\wedge \frac{3H}{2})$, then the following integral is finite:
  \begin{equation*}
	\int_{\mathbb{R}_+^2, \eta \geq \eta'} g(\eta,\eta')\Big(\int_{(0,\eta+\eta')}f_{\alpha_1}(\xi,\eta,\eta') d\xi \Big)^2<\infty.
  \end{equation*}

  \item [(ii)] If $\alpha_2 \in \big(0\vee (\frac32 H-\frac12)\,,H\big)$, then the following integral is finite:
  \begin{equation*}
	\int_{\mathbb{R}_+^2, \eta \geq \eta'} g(\eta,\eta')\Big(\int_{[\eta+\eta',\,\infty)}f_{\alpha_2}(\xi,\eta,\eta') d\xi \Big)^2<\infty.
  \end{equation*}
  \end{description}
\end{lemma}
\begin{proof}
(i): Choose a real number $q$ such that
\begin{equation*}
   2\alpha_1+1-4H< q<1-H< 2\alpha_1+2-4H.
\end{equation*}
Then it follows from (i) of Lemma~\ref{est.f} that
\begin{eqnarray*}
	&\quad& \int_{\mathbb{R}_+^2, \eta \geq \eta'} g(\eta,\eta')\Big(\int_{(0,\eta+\eta')}f_{\alpha_1}(\xi,\eta,\eta') d\xi \Big)^2\\
 &\le &C \int_{\mathbb{R}_+^2, \eta \geq \eta'} g(\eta,\eta')\left[\eta^{1-2H} (\eta-\eta')^{-1+2\alpha}\right]^2   \\
 &\quad & (\text{let} \quad \eta'= x \eta)\\
 &=&C \int_0^\infty \dif \eta \frac{\eta^{1-2H}}{1+\eta^2}\int_0^1 \frac{(\eta x)^{1-2H}}{1+\eta^2 x^2} \eta^{-2+4\alpha_1}(1-x)^{-2+4\alpha_1} \eta^{2-4H}\eta \dif x \\
	& \leq & C \int_0^\infty \frac{\eta^{3-8H+4\alpha_1}}{1+\eta^2} \dif \eta \int_0^1 \frac{x^{1-2H}(1-x)^{-2+4\alpha_1}}{(\eta^2x^2)^{q}} \dif x < \infty \,.
\end{eqnarray*}
   (ii): It is a direct consequence of Lemma~\ref{est.f} (ii).
\end{proof}

\begin{lemma}\label{last key 00}
Let $\gamma,\,\lambda,\,\kappa $ be as above and $q(t,s):= \mathbf{1}_{\{0\le s\le  t\le T\}} $. When $H\in (\frac14,\frac12)$,
    \begin{align}
    &  \int_{[0,T]^4,\, t_2\le t_1}  e^{-\bar{\gamma}(t_1-s_1)-\gamma(t_2-s_2)}q(t_1,s_1)q(t_2,s_2) (t_1-t_2)^{\beta}  \sgn(s_1-s_2)  \abs{s_1-s_2}^{\beta } \dif \vec{t}  \dif \vec{s}  \notag\\
  &=\frac{T^{4H}}{4H(4H-1)\abs{\gamma}^2}-\frac{ T}{\lambda }\left[\frac{\Gamma^2(2H)}{2{\bar{\gamma}^{4H}}}  - \RE\frac{\kappa}{\gamma^{4H}} \right]-\frac{\lambda }{(4H-1) \abs{\gamma}^4} T^{4H-1}+O(1),\label{last jielun} 
\end{align}
and 
\begin{align}
&\int_{[0,T]^4, t_1\le s_2}  e^{-\bar{\gamma}(t_1-s_1)-\bar{\gamma}(s_2-t_2)}q(t_1,s_1)q(s_2,t_2) (s_2-s_1)^{\beta }\sgn(t_1-t_2)  \abs{t_2-t_1}^{\beta}  \dif \vec{t}  \dif \vec{s}\notag\\
&=\frac{-T^{4H}}{4H(4H-1)\bar{\gamma}^2} +\frac{2H(\Gamma^2(2H)-2\kappa)}{\bar{\gamma}^{4H+1}}T+ \frac{1}{(4H-1)\bar{\gamma}^3}T^{4H-1} +O(1).  \label{last jielun st}  
\end{align}
\end{lemma}
\begin{proof}
    We denote the first integral as $R(T)$ 
    and decompose it into the sum of integrals $R_i(T)$ over the disjoint regions $\Delta_i,\,i=1,2,3$:
    \begin{align}\label{rt fenjie}
        R(T) =R_{1}(T)+  R_2(T)+R_3(T),
    \end{align}where 
    \begin{align*}
\Delta_1&=\set{0\le  s_1\le s_2\le t_2\le t_1 \le T },\\
\Delta_2&=\set{0\le s_2\le t_2 \le s_1\le t_1 \le T }, \\
\Delta_3&=\set{0\le s_2\le s_1\le t_2\le t_1 \le T } .
\end{align*}
 Making change of variables $x=\abs{s_1-s_2},\, y=t_1-t_2,\,z=\abs{t_2-(s_1\vee s_2)}$, we obtain
 \begin{align*}
    {R_{1}(T)}&=-{\int_{0\le x+y+z\le T}e^{-\bar{\gamma}(x+  y)-2\lambda z}x^{\beta}y^{\beta}}\big(T-(x+y+z)\big)\dif x\dif y\dif z,\\
    {R_2(T)}&= \int_{0\le z\le x\wedge y\le x+y-z\le T}e^{-\bar{\gamma}y-\gamma x+2\lambda z} x^{\beta}y^{\beta}\big(T-(x+y-z)\big)\dif x\dif y\dif z,\\
    {R_3(T)}&={\int_{0\le x+y+z\le T}e^{-\bar{\gamma}x-\gamma y-2\lambda z}x^{\beta}y^{\beta}}\big(T-(x+y+z)\big) \dif x\dif y\dif z.
 \end{align*} For the term ${R_2(T)}$,  we have
\begin{align*}
      {R_2(T)}&=\int_{[0,T]^2}e^{-\bar{\gamma}y-\gamma x} x^{\beta}y^{\beta}\dif x\dif z \int_{0\vee(x+y-T)}^{x\wedge y} e^{2\lambda z} \big(T-(x+y-z)\big) \dif z.\end{align*}
Applying integration by parts yields
\begin{align*}
    \int_{0\vee(x+y-T)}^{x\wedge y} e^{2\lambda z} \big(T-(x+y-z)\big) \dif z&=\frac{1}{2\lambda}\Bigg[
    -e^{0\vee \big(2\lambda (x+y-T)\big)}\big(T-x-y+ 0\vee (x+y-T)\big)\notag\\
    &+e^{2\lambda (x\wedge y)}(T-x\vee y) -\frac{1}{2\lambda}\Big[e^{2\lambda (x\wedge y)}-e^{0\vee \big(2\lambda (x+y-T)\big)}\Big]\Bigg].\end{align*}  Then we have 
      \begin{align*}
   R_2(T)&=\frac{1}{2\lambda}\Bigg[\int_{[0,T]^2}e^{-\bar{\gamma}y-\gamma x+2\lambda (x\wedge y))} x^{\beta}y^{\beta}\left(T-x\vee y -\frac{1}{2\lambda}\right)\dif x\dif y\notag\\
   &-\int_{0\le x+y\le T}e^{-\bar{\gamma}y-\gamma x } x^{\beta}y^{\beta}(T-x-y)\dif x\dif y\Bigg]+O(1). 
\end{align*}
By the symmetry and the inequality  
\begin{equation*}
   \abs{\int_{[0,T]^2,x+y> T}e^{-\bar{\gamma}y-\gamma x } x^{\beta}y^{\beta}(T-x-y)\dif x\dif y }\le \frac{1}{4H}e^{-\lambda T}(2T)^{4H+1}=o(1),
\end{equation*}
 we have 
\begin{align}\label{r2t zhoujianjg}
  R_2(T) &=\frac{1}{\lambda}\Bigg[\RE \int_{0\le x\le y\le T}e^{\bar{\gamma}(x-y) } x^{\beta}y^{\beta}(T-y -\frac{1}{2\lambda})\dif x\dif y\notag\\
   &-\frac12 T \int_{[0,T]^2}e^{-\bar{\gamma}y-\gamma x } x^{\beta}y^{\beta}\dif x\dif y \Bigg]+O(1).\end{align}
Lemma~\ref{asymptotic expansion key} implies that
    \begin{align}\label{zhjjg 0000}
   \int_{0\le x\le y\le T}e^{\bar{\gamma}(x-y) } x^{\beta}y^{1+\beta} \dif x\dif y &=  \frac{T^{4H}}{4H\bar{\gamma}}- \frac{T^{4H-1}}{\bar{\gamma}^2}+\frac{2H}{\bar{\gamma}}\int_{0\le x\le y\le T}e^{\bar{\gamma}(x-y) } x^{\beta}y^{\beta} \dif x\dif y +O(1).\end{align}
Plugging \eqref{zhjjg 0000} into \eqref{r2t zhoujianjg} yields
\begin{align*}
  R_2(T) &=\frac{1}{\lambda}\left[\RE \left[ -\frac{T^{4H}}{4H\bar{\gamma}}+ \frac{T^{4H-1}}{\bar{\gamma}^2}+\left(T-\frac{1}{2\lambda}-\frac{2H}{\bar{\gamma}}\right) \int_{0\le x\le y\le T}e^{\bar{\gamma}(x-y) } x^{\beta}y^{\beta} \dif x\dif y\right]\right.\notag\\
   &\left.-\frac12 T \int_{[0,T]^2}e^{-\bar{\gamma}y-\gamma x } x^{\beta}y^{\beta}\dif x\dif y \right]+O(1).\end{align*}  Applying Lemma~\ref{asymptotic expansion key-0} yields 
   \begin{align}
   R_2(T) &=\frac{1}{\lambda}\RE\Bigg[\left(T-\frac{1}{2\lambda}-\frac{2H}{\bar{\gamma}}\right)\left[ \frac{1}{(4H-1)\bar{\gamma}}T^{4H-1}+\frac{\kappa}{\bar{\gamma}^{4H}}-\frac{1}{2\bar{\gamma}^{2}}T^{4H-2} \right]\notag\\
   &- \frac{T^{4H}}{4H\bar{\gamma}}+ \frac{T^{4H-1}}{\bar{\gamma}^2} -\frac{T}{2 }\Gamma^2(2H) \frac{1}{\abs{\gamma}^{4H}}\Bigg]+O(1)\notag\\
   &=\frac{T^{4H}}{4H(4H-1)\abs{\gamma}^2}+\frac{ T}{\lambda }\left[\kappa \RE\gamma^{-4H} -\frac12\Gamma^2(2H)  {\abs{\gamma}^{-4H}}\right]\notag\\
   &-\frac{T^{4H-1}}{2(4H-1)\lambda}\RE\left[\frac{1}{\gamma^2}+\frac{1}{ \abs{\gamma}^2} \right]+O(1).\label{last yansuan 1}
\end{align}

Using the following integration result 
\begin{align*}
    \int_0^{T-x-y}e^{-2\lambda z} \big(T-(x+y+z)\big) \dif z=\frac{1}{2\lambda}\Big[T-x-y-\frac{1}{2\lambda}\big(1-e^{-2\lambda (T-x-y)}\big)\Big],
\end{align*}
we obtain
\begin{align}
    {R_{1}(T)}+{R_3(T)}&=\frac{T}{2\lambda}\left[{\int_{0\le x+y\le T}e^{-\bar{\gamma}x-\gamma y }x^{\beta}y^{\beta}} \dif x\dif y -{\int_{0\le x+y \le T}e^{-\bar{\gamma}(x+  y) }x^{\beta}y^{\beta}} \dif x\dif y\right]+O(1)\notag\\
    &=\frac{T}{2\lambda}\left[{\int_{[0, T]^2}e^{-\bar{\gamma}x-\gamma y }x^{\beta}y^{\beta}} \dif x\dif y -{\int_{[0, T]^2}e^{-\bar{\gamma}(x+ y )}x^{\beta}y^{\beta}} \dif x\dif y\right]+O(1)\notag\\
    &=\frac{T}{2\lambda}\Gamma^2(2H)\left(\frac{1}{\abs{\gamma}^{4H}}- \frac{1}{\bar{\gamma}^{4H}}\right)+O(1),\label{last yansuan 2}
\end{align}
where in the second step we have applied the inequality
\begin{equation*}
   \abs{\int_{[0,T]^2,x+y> T}\big(e^{-\bar{\gamma}x-\gamma y } - e^{-\bar{\gamma}  (x+y) }\big) x^{\beta}y^{\beta}\dif x\dif y }\le \frac{1}{4H}e^{-\lambda T}(2T)^{4H}=o(1).
\end{equation*}
Plugging equation \eqref{last yansuan 1} and equation \eqref{last yansuan 2} into equation \eqref{rt fenjie} yields the desired result \eqref{last jielun}.

We denote the second integral as $S(T)$ 
    and decompose it into the sum of integrals $S_i(T)$ over the disjoint regions $\Delta_i',\,i=1,2,3$:
    \begin{align}\label{st fenjie 00 1}
        S(T) =S_{1}(T)+  S_2(T)+S_3(T),
    \end{align}where 
    \begin{align*}
\Delta_1'&=\set{0\le s_1\le t_2 \le t_1 \le s_2\le T },\\
\Delta_2'&= \set{0\le t_2\le  s_1\le t_1 \le s_2 \le T },\\
\Delta_3'&=\set{0\le s_1\le t_1\le t_2 \le s_2 \le T } .
\end{align*}
Making change of variables $x=\abs{t_1-t_2}, y=s_2-s_1$ and $z=(t_1\vee t_2)-s_1$ implies
\begin{align*}
   S_{1}(T)
   &=\int_{0\le x \le y\le T} e^{-\bar{\gamma} (x+y)}x^{\beta}y^{\beta}(T-y)(y-x)\dif x\dif y\\
   &=T\int_{0\le x \le y\le T} e^{-\bar{\gamma} (x+y)}x^{\beta}y^{\beta} (y-x)\dif x\dif y+O(1)\\
   &=\frac12 T\int_{[0,T]^2} e^{-\bar{\gamma} (x+y)}x^{\beta}y^{\beta} \abs{y-x}\dif x\dif y+O(1),\\
   S_{2}(T)&= \int_{0\le z\le x\wedge y\le x+y-z\le T}e^{-\bar{\gamma} (x+y)}x^{\beta}y^{\beta}\big(T-(x+y-z)\big)\dif x\dif y\dif z,\\
   S_{3}(T)
   &=-\int_{0\le x \le y\le T}e^{\bar{\gamma} (x-y)}x^{\beta}y^{\beta}(T-y)(y-x)\dif x\dif y.
\end{align*}
For the term $ S_{2}(T)$, it is clear the integral is $O(1)$ when $x+y>T$ and hence 
\begin{align*}
     S_{2}(T)&=\int_{[0,T]^2,x+y\le T}e^{-\bar{\gamma} (x+y)}x^{\beta}y^{\beta} \Big[\frac12 (T-x-y+z)^2|_{z=0}^{z=x\wedge y}\Big]\dif x\dif y+O(1)\\
     &=\int_{[0,T]^2,x+y\le T}e^{-\bar{\gamma} (x+y)}x^{\beta}y^{\beta} \Big[\frac12  (x\wedge y)(2T-x\vee y-x-y)\Big]\dif x\dif y+O(1)\\
     &=T\int_{[0,T]^2,x+y\le T}e^{-\bar{\gamma} (x+y)}x^{\beta}y^{\beta}    (x\wedge y) \dif x\dif y+O(1)\\
     &=T\int_{[0,T]^2}e^{-\bar{\gamma} (x+y)}x^{\beta}y^{\beta}    (x\wedge y) \dif x\dif y+O(1).
\end{align*}
By making change of variable $u=x+y$, we have 
\begin{align}   S_1(T)+S_2(T)&=T \int_{[0,T]^2}e^{-\bar{\gamma} (x+y)}x^{\beta}y^{\beta}    \big(\frac12\abs{y-x}+x\wedge y\big) \dif x\dif y +O(1)\notag\\
&=\frac12 T \int_{[0,T]^2,x+y\le T}e^{-\bar{\gamma} (x+y)}x^{\beta}y^{\beta}     (x+y)  \dif x\dif y +O(1)\notag\\
&=\frac{T}{2} B(1+\beta,1+\beta)\int_0^Te^{-\bar{\gamma} u} u^{2\beta+2}\dif u+O(1)\notag\\
&=\frac{2H\Gamma^2(2H)}{\bar{\gamma}^{1+4H}}T +O(1).\label{s12 t jifen}
\end{align}
Applying Lemma~\ref{coro key point000} yields when $H\in (\frac{1}{4}, \frac12)$
\begin{align}
    S_3(T) &=-T \int_{0\le x \le y\le T}e^{\bar{\gamma} (x-y)}x^{\beta}y^{\beta}(y-x)\dif x\dif y\notag\\
    &+\int_{0\le x \le y\le T}e^{\bar{\gamma} (x-y)}x^{\beta}y^{\beta+1}(y-x)\dif x\dif y\notag\\
    &=-T\Big[\frac{1}{(4H-1)\bar{\gamma}^2} T^{4H-1}+ \frac{4H\kappa}{ \bar{\gamma}^{1+4H}} -\frac{1}{\bar{\gamma}^3}T^{4H-2}+O(T^{4H-3})\Big]\notag\\
    &+\Big[\frac{1}{4H\bar{\gamma}^2 }  T^{4H}+\frac{2(1-2H)}{(4H-1)\bar{\gamma}^3} T^{4H-1} +O(1) \Big]\notag\\
    &=-\frac{T^{4H}}{4H(4H-1)\bar{\gamma}^2} -\frac{4H\kappa}{\bar{\gamma}^{4H+1}}T+ \frac{1}{(4H-1)\bar{\gamma}^3}T^{4H-1} +O(1).\label{s3t jifen}
\end{align}
Plugging equation \eqref{s12 t jifen} and equation \eqref{s3t jifen} into equation \eqref{st fenjie 00 1} yields the desired result \eqref{last jielun st}.

\end{proof}

\section*{Acknowledgement}
We would like to thank two anonymous referees for their helpful comments that allowed us to improve the presentation of the results.




\end{document}